\newtheorem{theorem}{Theorem}[section]
\newtheorem{lemma}[theorem]{Lemma}
\newtheorem{proposition}[theorem]{Proposition}
\newtheorem{corollary}[theorem]{Corollary}
\newtheorem*{GRID}{Theorem \ref{thm:gridthm}}
\newtheorem*{OBS}{Theorem \ref{thm:2}}
\newcommand{\rNum}[1]{\lowercase\expandafter{\romannumeral #1\relax}}
\newcommand\abs[1]{\lvert #1\rvert}
\newcommand{\mm}{\operatorname{mm}}
\newcommand{\mmw}{\operatorname{mmw}}
\newcommand{\MMW}{mm-width }
\newcommand{\rw}{\operatorname{rw}}
\newcommand{\brw}{\operatorname{brw}}
\newcommand{\tw}{\operatorname{tw}}
\begin{document}
\title{Characterizing graphs of maximum matching width at most $2$}
\author{Jisu Jeong, Seongmin Ok, Geewon Suh}
\date\today
\maketitle
\begin{abstract}
The maximum matching width is a width-parameter that is defined on a branch-decomposition over the vertex set of a graph.
The size of a maximum matching in the bipartite graph is used as a cut-function.
In this paper, we characterize the graphs of maximum matching width at most $2$ using the minor obstruction set. 
Also, we compute the exact value of the maximum matching width of a grid.
\end{abstract}

\section{Introduction}
Treewidth and branchwidth are well-known width-parameters of graphs used in structural graph theory and theoretical computer science. Based on Courcelle's theorem~\cite{Courcelle1990}, which states that every property on graphs definable in monadic second-order logic can be decided in linear time on a class of graphs with bounded treewidth, many NP-hard problems have been shown to be solvable in polynomial time by the dynamic programming when the input has bound treewidth or branchwidth.

Vatshelle~\cite{Vatshelle2012} introduced a new graph width-parameter, called the maximum matching width (\MMW in short), 
that uses the size of a maximum matching as a cut-function in its branch-decomposition over the vertex set of a graph. 
Maximum matching width is related to treewidth and branchwidth as shown by the inequality $\mmw(G)\le \max(\brw(G),1)\le \tw(G)+1 \le 3\mmw(G)$ for every graph $G$~\cite{Vatshelle2012} where $\mmw(G)$, $\tw(G)$, and $\brw(G)$ is the maximum matching width, the treewidth, and the branchwidth of $G$ respectively.
This implies that bounding the treewidth or branchwidth is qualitatively equivalent to bounding the maximum matching width.
Maximum matching width gives a more efficient algorithm for some problems. For a given branch-decomposition of a graph $G$ of maximum matching width $k$, we can solve Minimum Dominating Set Problem in time $O^* (8^k)$~\cite{JST15}, which gives a better runtime than $O^*(3^{\tw(G)})$-time algorithm in~\cite{RBR2009} when $\tw(G)>(\log_3 8) k$. Remark that Minimum Dominating Set Problem can not be solved in time $O^*((3-\varepsilon)^{\tw(G)})$ for every $\varepsilon>0$ unless the Strong Exponential Time Hypothesis fails~\cite{DDS2011}.

Robertson-Seymour theorem~\cite{RS04} states that every minor-closed class of graphs has a finite minor obstruction set.
In the other words, a graph $G$ is in the class if and only if $G$ has no minor isomorphic to a graph in the obstruction set. Much work has been done to identify the minor obstruction set for various graph classes, especially for graphs of bounded width-parameters~\cite{Bodlaender2015,Dvorak2012,Kinnersley1994}. 

Let $K_n$, $C_n$, and $P_n$ be the complete graph, the cycle graph, and the path graph on $n$ vertices, respectively.
The graph $K_3$ and $K_4$ is the unique minor obstruction for the graphs of treewidth at most $1$ and $2$~\cite{WC1983}, respectively. 
The minor obstruction set for a class of graphs having treewidth at most $3$ is $\{K_5,K_{2,2,2},K_2 \times C_5,M_8\}$ where $K_2 \times C_5$ is the Cartesian product of $K_2$ and $C_5$, and $M_8$ is the Wagner graph, also called the M\"{o}bius ladder with eight vertices~\cite{Arnborg1990,ST1990}.

Robertson and Seymour~\cite{RS1991} gave a characterization for the classes of graphs of branchwidth at most $1$ and at most $2$.
The graphs $K_3$ and $P_4$ are forbidden minors for the graphs of branchwidth at most $1$.
For the class of graphs of branchwidth at most $2$, its minor obstructions is the same as treewidth, which is $K_4$.
The graphs of branchwidth at most $3$ have four minor obstructions; $\{K_5,K_{2,2,2},K_2 \times C_4, M_8\}$~\cite{BT1999}.

One of the main results of this paper is to find the minor obstruction set for the class of graphs of \MMW at most $2$.
Note that the class of graphs with bounded \MMW is closed under taking minor, as shown in Corollary \ref{coro:closed}.
\begin{OBS} 
Let $\mathcal{O}=\mathcal{O}_3\cup\mathcal{O}_4\cup\mathcal{O}_5\cup\mathcal{O}_6$ be the set of $42$ graphs in Figures~\ref{fig:3conn},\ref{fig:o4},\ref{fig:o5},\ref{fig:o6}. A graph $G$ has \MMW at most 2 if and only if $G$ has no minor isomorphic to a graph in $\mathcal{O}$.
\end{OBS}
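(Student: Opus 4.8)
The plan is to prove the two implications separately; they are of quite different difficulty. For \emph{necessity} I would show that $\mmw(H)\ge 3$ for each of the $42$ graphs $H\in\mathcal{O}$; since mm-width is minor-monotone (Corollary~\ref{coro:closed}), any graph containing such a minor then has mm-width at least $3$. Each $H$ is small, so this is a finite check, and it can be done uniformly: by a centroid-type property of subcubic trees, every branch-decomposition of $H$ over $V(H)$ has a tree-edge whose induced vertex bipartition $(A,B)$ has both sides large, and one verifies that for every such bipartition the bipartite graph of edges between $A$ and $B$ has a matching of size $3$. I would organise the verification along the four families $\mathcal{O}_3,\dots,\mathcal{O}_6$, using that the graphs within a family share a common shape. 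For the claim that $\mathcal{O}$ is \emph{precisely} the minor-obstruction set one should also check that every proper minor of every $H\in\mathcal{O}$ has mm-width at most $2$; this follows once the converse is established.

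For \emph{sufficiency} I would prove the contrapositive: if $\mmw(G)\ge 3$, then $G$ has a minor in $\mathcal{O}$. Let $G$ be minor-minimal with $\mmw(G)\ge 3$, so that every proper minor of $G$ has mm-width at most $2$. Then $G$ is connected (mm-width of a disconnected graph is the maximum over its components), and in fact $\mmw(G)=3$: for any edge $e$ we have $\mmw(G-e)\le 2$, while deleting an edge lowers the matching number of each cut by at most one, so $\mmw(G)\le\mmw(G-e)+1\le 3$. The heart of this step is a family of \emph{reduction lemmas} constraining such a minimal $G$: pendant vertices and other low-degree configurations can be removed without dropping mm-width below $3$, and, crucially, no small-order separation of $G$ can cut off a ``reducible'' piece, since from branch-decompositions of suitable minors of the two sides of the separation one can build a branch-decomposition of $G$ of mm-width $2$. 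Carrying these out forces $G$ to be (essentially) $3$-connected; the delicate point is that, unlike the branchwidth cut function, the maximum-matching cut function is not invariant under subdividing edges, so the reductions must be phrased carefully.

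It remains to bound $|V(G)|$ and then enumerate. From $\tw(G)+1\le 3\,\mmw(G)$ we get $\tw(G)\le 8$, so $G$ has a tree-decomposition of width at most $8$; if $G$ were large this decomposition would contain a long ``linear'' stretch of low-order separators, and a short-circuiting argument along that stretch would produce a proper minor of $G$ still of mm-width $3$ (or directly a branch-decomposition of mm-width $2$), contradicting minimality. Hence $|V(G)|$ is bounded by an explicit constant $c$, which should coincide with the order of the largest graph in $\mathcal{O}$. The proof then concludes with a finite (but sizeable) computation: list all $3$-connected graphs on at most $c$ vertices, compute their mm-width, retain the minor-minimal ones of mm-width $3$, and verify that they are exactly the $42$ graphs in Figures~\ref{fig:3conn}--\ref{fig:o6}; I would carry this out by computer and cross-check by hand against the figures.

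The main obstacle is the sufficiency direction, and within it the passage from ``minor-minimal, $3$-connected, mm-width $3$'' to an explicit bound on the number of vertices. Because the maximum-matching cut function does not behave submodularly the way the branchwidth and rankwidth cut functions do, the tangle and linkedness machinery that makes analogous arguments routine for those parameters is unavailable; instead one must reason concretely — tracking which separator vertices a crossing matching can use — to show that any sufficiently long serial structure can be rerouted so that every cut has matching number $2$. Getting the reduction lemmas for order-two and order-three separations tight enough to establish $3$-connectivity and the size bound at once is where the effort concentrates; the rest is bookkeeping together with the finite verification.
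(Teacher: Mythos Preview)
Your proposal contains a factual error that undermines the strategy. You assert that the maximum-matching cut function ``does not behave submodularly the way the branchwidth and rankwidth cut functions do'' and that therefore ``the tangle and linkedness machinery \dots\ is unavailable''. In fact $\mm_G$ \emph{is} symmetric and submodular (this is stated in the preliminaries, citing~\cite{ST2014}), and the paper uses $\mm$-tangles throughout: Lemma~\ref{lem:tangle3} and Lemma~\ref{lem:mmw3} certify $\mmw(H)\ge 3$ for every $H\in\mathcal{O}$ by exhibiting a tangle of order~$3$, via Theorem~\ref{thm:tangle}. Your alternative necessity argument (check that every balanced bipartition supports a matching of size~$3$) is not obviously sufficient either, since the centroid edge of a branch-decomposition need not be the edge of largest $\mm$-value.

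More seriously, your structural reduction is wrong. You plan to show that a minor-minimal $G$ with $\mmw(G)\ge 3$ is $3$-connected, and then to ``list all $3$-connected graphs on at most $c$ vertices'' and pick out the minimal ones. But only the five graphs in $\mathcal{O}_3$ are $3$-connected; every graph in $\mathcal{O}_4\cup\mathcal{O}_5\cup\mathcal{O}_6$ is $2$-connected with genuine $2$-cuts, so your final enumeration would miss the bulk of the obstruction set. The paper's route is quite different: for an obstruction $G$ that is not $3$-connected, one analyses each $2$-cut $\{a,b\}$ and proves (Lemmas~\ref{lem:bad+p4}, \ref{lem:bad+p3}, \ref{lem:2cut}, \ref{lem:bad+c4}) that all but one component of $G\setminus\{a,b\}$ must be a single vertex, a $P_2$, or two parallel single vertices. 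Contracting these ``good-sides'' yields a $3$-connected minor $H$ on four, five, or six vertices (Lemma~\ref{lem:3connbase}), and $G$ is recovered from $H$ by good-subdividing some edges. The enumeration (Lemmas~\ref{lem:o4}--\ref{lem:o6}) is over such $H$ together with the finitely many good-subdivision patterns, not over $3$-connected candidate obstructions. Your ``short-circuiting along a linear stretch of a width-$8$ tree-decomposition'' is not needed and, as stated, too vague to yield the sharp vertex bound; in the paper the bound falls out of the $2$-cut analysis directly.
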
 

The exact value of some width-parameters for grid graphs are well known. For an integer $k\ge 1$, the branchwidth and treewidth of the $k\times k$-grid are $k$~\cite{RS1991,ST1993}, and the rank-width of the $k\times k$-grid is $k-1$~\cite{Jelinek2010}. From the inequality $\rw(G)\le\mmw(G)\le\max(\brw(G),1)$~\cite{Vatshelle2012}, the \MMW of the $k\times k$-grid is either $k-1$ or $k$. Our second result is that the latter is the right answer when $k \geq 2$.

\begin{GRID}\label{thm:grid}
The $k\times k$-grid has \MMW $k$ for $k\ge 2$.
\end{GRID}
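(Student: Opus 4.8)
The plan is to prove both bounds, $\mmw(G)\le k$ and $\mmw(G)\ge k$, for $G$ the $k\times k$-grid. The upper bound is immediate from the cited inequality $\mmw(G)\le\max(\brw(G),1)$ and the known fact that the $k\times k$-grid has branchwidth $k$. For the lower bound, the cited inequality $\rw(G)\le\mmw(G)$ together with $\rw(G)=k-1$ already yields $\mmw(G)\ge k-1$, so it suffices to rule out $\mmw(G)=k-1$: I assume $(T,\delta)$ is a branch-decomposition of $G$ over $V(G)$ with $\mm(A_e)\le k-1$ for every edge $e$ of $T$ (where $(A_e,B_e)$ is the induced bipartition of $V(G)$) and aim for a contradiction.

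First I would record a dichotomy for small cuts of the grid. If $\mm(A)\le k-1$, then by K\"onig's theorem the bipartite graph of edges between $A$ and $B$ has a vertex cover $S$ with $\abs{S}\le k-1$; since the grid has $k$ pairwise-disjoint rows and $k$ pairwise-disjoint columns, $S$ entirely misses some row $R$ and some column $C$. As $R$ and $C$ each induce a connected subgraph of $G-S$, each lies wholly in $A$ or wholly in $B$, and since they intersect they lie on the same side; call that side the \emph{big} side of $(A,B)$. No bipartition has two big sides, for a full row on one side and a full column on the other would meet. Now I orient each edge $e$ of $T$ toward the big side of $(A_e,B_e)$. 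Every orientation of a tree has a vertex of out-degree zero; if that vertex is a leaf $\delta^{-1}(v)$, the big side of its unique edge is $\{v\}$, which is impossible for $k\ge2$. Hence the vertex is internal, and the three subtrees meeting it induce a partition $X_1\sqcup X_2\sqcup X_3=V(G)$ with $V(G)\setminus X_i$ big for each $i$; in particular $X_i$ avoids a full row $R_{a_i}$ and a full column $C_{b_i}$. The case $k=2$, where $G=C_4$, is handled separately, as $C_4$ has essentially one branch-decomposition.

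The crux --- and the step I expect to be the main obstacle --- is to contradict the existence of such $X_1,X_2,X_3$, i.e.\ to show three ``small sides'' cannot cover all $k^2$ vertices. Merely knowing that $X_i$ avoids one row and one column is not enough, since the three anti-crosses $\{(r,c):r\ne a_i,\ c\ne b_i\}$ can already cover the grid; the finer structure of the cover $S_i$ must be used. Writing $X_i=(X_i\setminus S_i)\cup(X_i\cap S_i)$, the part $X_i\setminus S_i$ is a union of components of $G-S_i$, each separated by $S_i$ from the component containing the $S_i$-avoiding row and column, so with $\abs{S_i}\le k-1$ the separation structure of the grid should confine $X_i\setminus S_i$ to a small, peripheral region --- far from filling a quadrant of the cross $R_{a_i}\cup C_{b_i}$. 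Making this confinement quantitative enough that $X_1,X_2,X_3$ provably miss a vertex is the technical heart of the argument; I expect it to need a careful analysis of where the rows $R_{a_i}$ and columns $C_{b_i}$ lie and how the sets $S_i$ interact --- equivalently, a direct check that $\{(A,B):\mm(A)\le k-1,\ B\text{ is big}\}$ obeys the tangle axioms, whence $\mmw(G)\ge k$ by branch-width/tangle duality.
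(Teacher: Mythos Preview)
Your framework coincides with the paper's: the upper bound is the same, and for the lower bound you are in effect exhibiting an $\mm$-tangle of order $k$ by taking the ``small'' sides (your non-big sides) of all bipartitions with $\mm\le k-1$. Your K\"onig argument that exactly one side is big is the content of the paper's Lemmas~\ref{rc}--\ref{norc}, and you correctly isolate the crux as the claim that three small sets cannot cover $V(G_k)$ (tangle axiom~(T2), the paper's Lemma~\ref{main}).

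The gap is that you do not prove this claim; you only describe what you expect the argument to look like. Your suggested line---decompose $X_i$ via the K\"onig cover $S_i$ and argue that $X_i\setminus S_i$ is confined to a small peripheral region---is not carried out and is not the paper's route, and it is not clear it can be completed: for a $(k-1)$-vertex separator $S_i$ in the grid, the components of $G_k-S_i$ on the non-cross side can be large (e.g.\ a diagonal $S_i$ cuts off a triangle with about $k^2/2$ vertices), so a cardinality count on $X_i$ will not suffice, and you yourself note that the ``anti-cross'' bound is too weak. The paper instead proves Lemma~\ref{main} by induction on $k$: one removes the last row and column $R_k\cup C_k$, applies the induction hypothesis to the restrictions $X_i'=X_i\setminus(R_k\cup C_k)$ in $G_{k-1}$ to obtain a matching of size $k-1$ for some $X_i'$, and then adds one extra edge inside $R_k\cup C_k$ to reach $\mm(X_i)\ge k$. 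The residual case where one $X_t$ misses $R_k\cup C_k$ entirely (hence, by symmetry, misses all four boundary lines $R_1,R_k,C_1,C_k$) is dispatched by a separate column-by-column argument showing $\mm(X_1)+\mm(X_2)\ge 2k$. This induction, rather than the K\"onig-cover geometry you sketch, is the missing idea.
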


Section~\ref{sec:prelim} lists some of the definitions, including a \emph{tangle}, and provides preliminaries for the maximum matching width.
In Section~\ref{sec:mmw2} we identify the minor obstruction set for graphs with \MMW at most $2$. 
Section~\ref{sec:grid} is for the precise \MMW of the square grids.

\section{Preliminaries}\label{sec:prelim}

Every graph $G=(V,E)$ in this paper is finite and simple. For a set $X \subseteq V(G) \cup E(G)$, we write $G\setminus X$ to denote the graph obtained from $G$ by deleting all vertices and edges in $X$. If $X \subseteq E(G)$, we write $G/X$ to denote the graph obtained from $G$ by contracting the edges in $X$. If $X = \{x\}$, then we write $G\setminus x$ and $G/x$ instead of $G\setminus X$ and $G/X$, respectively.
If a subgraph $G'$ of $G$ with $V(G') = X$ contains all the edges of $G$ whose both ends are in $X$, then we call $G'$ \emph{induced by} $X$ and write $G':=G[X]$.
For a graph $G$ and disjoint subsets $X,Y\subseteq V(G)$, let $E_G[X,Y]$ (or $E[X,Y]$) denote the set of all edges $e=uv$ where $u$ is in $X$ and $v$ is in $Y$, and let $G[X,Y]=G(X\cup Y,E[X,Y])$. 
A graph $G$ is \emph{$k$-connected} if $\abs{V(G)}\ge k$ and $G\setminus X$ is connected for every $X\subset V(G)$ with $\abs{X}<k$. A \emph{bridge} is an edge $e$ such that $G\setminus e$ has more components than $G$. 
A \emph{block} is 
either a bridge as a subgraph or 
a maximal $2$-connected subgraph. 

We say that a tree is \emph{subcubic} if all vertices have degree $1$ or $3$.
A \emph{branch-decomposition} of a finite set $X$ is a pair $(T,\mathcal{L})$ of a subcubic tree $T$ together with a bijection $\mathcal{L}$ from the leaves of $T$ to $X$.
Note that an edge $ab$ of $T$ partitions the leaves of $T$ into two parts, say $A$ and $B$. 
We say an edge $e$ \emph{induces} the partition $(A,B)$.
A function $f:2^X\rightarrow \mathbb{Z}$ is \emph{symmetric} if $f(A)=f(X\setminus A)$ for all $A\subseteq X$, and
the function $f$ is \emph{submodular} if $f(A)+f(B)\ge f(A\cup B)+f(A\cap B)$ for all $A,B\subseteq X$.  
For each edge $e$ of $T$, and a symmetric, submodular function $f$, the \emph{$f$-value} of $e$ is equal to $f(A)=f(B)$ where $(A,B)$ is the partition induced by $e$.
The \emph{$f$-width} of a branch-decomposition $(T,\mathcal{L})$ is the maximum $f$-value of an edge of $T$, and 
the \emph{$f$-width} of $X$ is the minimum value of the $f$-width over all possible branch-decompositions of $X$. 
This notion of $f$-width provides a link between several width parameters.

For $A\subseteq E(G)$, let $br:2^{E(G)} \rightarrow \mathbb{Z}$ be the function so that $br(A)$ is the number of vertices that are incident to both an edge in $A$ and an edge in $E(G)\setminus A$. The \emph{branchwidth} of $G$, denoted by \emph{$\brw(G)$}, is the $br$-width over $E(G)$.

For $A\subseteq V(G)$, let $r:2^{V(G)} \rightarrow \mathbb{Z}$ be the function such that $r(A)$ is the rank of the adjacency matrix between $A$ and $V(G)\setminus A$ over $\mathbb{F}_2$. The \emph{rank-width} of $G$, denoted \emph{$\rw(G)$}, is the $r$-width over $V(G)$.

Let $\mm_G:2^{V(G)} \rightarrow \mathbb{Z}$ be the function such that $\mm_G(A)$ is the size of a maximum matching in $G[A,V(G)\setminus A]$. 
Note that the function $\mm_G$ is symmetric and submodular~\cite{ST2014}.
We use $\mm$ instead of $\mm_G$ if the host graph $G$ is clear from the context.
The \emph{maximum matching width} of $G$, denoted $\mmw(G)$, is the $\mm$-width over $V(G)$. 

A graph $H$ is a \emph{minor} of a graph $G$ if $H$ can be constructed from $G$ by deleting edges, deleting vertices, and contracting edges. We call a graph $G$ \emph{minor-minimal} with respect to a property $\mathcal{P}$ if $G$ has $\mathcal{P}$ but no proper minor of $G$ has $\mathcal{P}$. A graph $G$ is a \emph{forbidden minor} of a graph class $\mathcal{C}$ when $H \notin \mathcal{C}$ if $H$ has a minor isomorphic to $G$. Robertson and Seymour~\cite{RS04} state that the collection of minor-minimal graphs outside a minor-closed graph class is finite. The collection is called the \emph{minor obstruction set}. 

A graph is \emph{chordal} if every cycle $C$ of length at least $4$ has an edge, which is not contained in $E(C)$, connecting two vertices of $C$. A \emph{chordalization} of a graph $G$ is a chordal graph $H$ such that $V(H)=V(G)$ and $E(G)\subseteq E(H)$. 
An \emph{intersection graph} $G$ over a family $\{A_i\}$ of sets is the graph with $V(G)=\{A_i\}$ and $E(G)=\{A_i A_j : A_i\cap A_j \not = \emptyset\}$. 
Remark that a graph is chordal if and only if it is the intersection graph of the edge sets of  subtrees of a tree~\cite{Gavril1974}. 

\subsection{Maximum matching width}

Jeong, S\ae ther, and Telle~\cite{JST15} gave a new characterization of graphs of \MMW at most $k$ as an intersection graph by the following theorem. A tree is called \emph{nontrivial} if it has at least one edge.

\begin{theorem}[\cite{JST15}]\label{thm:newchar} 
The maximum matching width of a graph $G$ is at most $k$ if and only if there exist a subcubic tree $T$ and a set $\{T_x\}_{x\in V(G)}$ of nontrivial subtrees of $T$ such that 
\begin{itemize}
\item[(1)] if $uv\in E(G)$, then the subtrees $T_u$ and $T_v$ have at least one vertex of $T$ in common, 
\item[(2)] for each edge $e$ of $T$ there are at most $k$ subtrees in $\{T_x\}_{x\in V(G)}$ containing $e$.
\end{itemize}
\end{theorem}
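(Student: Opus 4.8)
The plan is to prove both implications through K\"onig's theorem, which is the natural bridge between the two sides of the equivalence. Recall that for an edge $e$ of a branch-decomposition inducing the partition $(A,B)$, the graph $G[A,B]$ is bipartite, so K\"onig's theorem gives $\mm(A)=$ (size of a minimum vertex cover of $G[A,B]$). Thus "few subtrees through an edge of $T$'' and "small matching across a cut'' are two faces of the same vertex-cover bound, and the whole proof is an exchange between subtree-congestion and vertex covers.

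For the backward direction, suppose we are given a subcubic tree $T$ and subtrees $\{T_x\}$ satisfying (1) and (2). First I would turn $T$ into a branch-decomposition $(T^\ast,\mathcal{L})$ of $V(G)$: for each $x$ pick a vertex $c_x$ of the (nonempty) subtree $T_x$ and attach a new pendant leaf $\ell_x$ near $c_x$, subdividing incident edges as needed to keep $T^\ast$ subcubic. For an edge of $T^\ast$ inherited from an edge $f$ of $T$, set $S_f=\{x: f\in T_x\}$, so $\abs{S_f}\le k$ by (2). I claim $S_f$ is a vertex cover of the crossing bipartite graph at $f$: if $uv\in E(G)$ crosses the cut, then by (1) the subtrees $T_u,T_v$ share a vertex $w$; since $c_u$ and $c_v$ lie on opposite sides of $f$, connectivity of $T_u$ (resp.\ $T_v$) forces $f\in T_u$ or $f\in T_v$, i.e.\ $u\in S_f$ or $v\in S_f$. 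For the pendant edges the crossing graph is a star with cover size at most $1$. K\"onig then yields $\mm\le k$ on every edge of $T^\ast$, so $\mmw(G)\le k$.

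For the forward direction, start from a branch-decomposition $(T',\mathcal{L})$ of $\mm$-width at most $k$. For each edge $e$ choose a minimum vertex cover $C_e$ of the crossing bipartite graph, so $\abs{C_e}=\mm(A_e)\le k$. Define $T_x$ to be the minimal subtree of $T'$ spanning $\{\ell_x\}\cup\{e: x\in C_e\}$ (adjoining the pendant edge at $\ell_x$ when this set is a single leaf, to force nontriviality). Property (1) then holds automatically: for $uv\in E(G)$ every edge on the $\ell_u$--$\ell_v$ path is covered by $u$ or $v$, and a short case analysis at the first node where the cover "switches'' from $u$ to $v$ shows that $T_u$ and $T_v$ meet at a common vertex of $T'$.

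The real difficulty is property (2): although each $\abs{C_e}\le k$, the minimal spanning subtree $T_x$ may traverse an edge $e$ with $x\notin C_e$, so that $\abs{\{x: e\in T_x\}}$ can exceed $\abs{C_e}$. This is precisely where the covers must be chosen \emph{coherently}: I would select the family $\{C_e\}$ so that for every $x$ the edge set $\{e: x\in C_e\}$ is already connected and contains the path toward $\ell_x$, a monotonicity condition guaranteeing $\{x: e\in T_x\}=C_e$ and hence (2). I expect this coherent choice of minimum vertex covers to be the \emph{main obstacle}: it amounts to producing a consistent, laminar-like system of minimum covers over the tree of nested cuts, which I would establish by an uncrossing argument that repairs any local violation using the submodularity of $\mm$ without enlarging any $\abs{C_e}$. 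The remaining points---subcubic attachment in the backward direction and nontriviality of the subtrees---are routine.
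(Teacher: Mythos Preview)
The paper does not give a proof of Theorem~\ref{thm:newchar}; it is imported verbatim from \cite{JST15} and used as a black box, so there is no in-paper argument to compare your proposal against.

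That said, your plan is the natural K\"onig-based route and is essentially how the result is proved in the original source. The backward direction is correct as written, modulo routine clean-up: after attaching the pendant leaves $\ell_x$ you must suppress the leftover leaves of $T$ and the resulting degree-$2$ vertices to obtain an honest branch-decomposition whose leaf set is exactly $V(G)$; this does not disturb your vertex-cover argument, since every surviving edge of $T^\ast$ still corresponds to an edge of $T$.

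For the forward direction you have correctly isolated the one genuine issue: an arbitrary family of minimum vertex covers $\{C_e\}$ need not be coherent, so the minimal spanning subtree $T_x$ can cross edges $e$ with $x\notin C_e$ and the congestion $\abs{\{x:e\in T_x\}}$ may exceed $k$. The repair you propose---uncross the covers so that for every $x$ the set $\{e:x\in C_e\}$ is already a subtree containing the pendant at $\ell_x$---is exactly what is needed, and it can be carried out by a local exchange at each internal node of $T'$ using K\"onig on the three incident cuts. But note that invoking ``submodularity of $\mm$'' is not by itself the mechanism; you really need a constructive exchange lemma showing that minimum covers at two nested cuts can be chosen so that the inner cover's far-side part is contained in the outer cover (and dually). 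So your proposal is on target, but the step you flag as the main obstacle is indeed where all the content lives, and at present it is a promise rather than a proof.
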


A \emph{tree-representation} of $G$ having width at most $k$ is a pair $(T,\{T_x\}_{x\in V(G)})$ where $T$ is a subcubic tree and a set $\{T_x\}_{x\in V(G)}$ of nontrivial subtrees satisfying the properties (1) and (2). Theorem~\ref{thm:newchar} says that a graph $G$ has a tree-representation of width at most $k$ if and only if $\mmw(G)\le k$.

For a tree-representation $(T,\{T_x\})_{x\in V(G)}$ of $G$, the intersection graph $G_T$ of the family $\{T_x\}_{x\in V(G)}$ is chordal and $G$ is a subgraph of $G_T$. 
Since $G$ and $G_T$ have the same tree-representation $(T,\{T_x\})_{x\in V(G)}$, 
every graph has a chordalization with the same mm-width.


It is easy to check that, for a graph $G$ and its vertex or edge $x$, 
\[
\mmw(G\setminus x)\le \mmw(G).
\]

\begin{lemma}
Let $G$ be a graph. For every edge $uv$ of $G$, $\mmw(G/uv) \leq \mmw(G)$. \end{lemma}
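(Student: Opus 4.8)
The plan is to argue via the tree-representation characterization of Theorem~\ref{thm:newchar} rather than directly with branch-decompositions and the function $\mm$. Set $k=\mmw(G)$ and fix a tree-representation $(T,\{T_x\}_{x\in V(G)})$ of $G$ of width at most $k$. Let $w$ denote the vertex of $G/uv$ obtained by identifying $u$ and $v$, so that $V(G/uv)=(V(G)\setminus\{u,v\})\cup\{w\}$ and a vertex $x\neq w$ is adjacent to $w$ in $G/uv$ exactly when $x$ is adjacent in $G$ to $u$ or to $v$. I would produce a tree-representation of $G/uv$ on the \emph{same} subcubic tree $T$: keep $T_x$ for every $x\in V(G)\setminus\{u,v\}$, and set $T_w:=T_u\cup T_v$.

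The first thing to check is that $T_w$ is a legitimate part of a tree-representation. Since $uv\in E(G)$, property~(1) applied to the given representation gives $V(T_u)\cap V(T_v)\neq\emptyset$; as $T_u$ and $T_v$ are connected subgraphs of the tree $T$ sharing a vertex, $T_u\cup T_v$ is again a connected subtree, and it is nontrivial because it contains $T_u$, which is nontrivial. All the retained subtrees $T_x$ are unchanged and hence still nontrivial. For property~(1): edges of $G/uv$ with both ends different from $w$ are also edges of $G$, so the required common vertex of the two subtrees is inherited; and if $x\neq w$ is adjacent to $w$ in $G/uv$, then $x$ is adjacent in $G$ to some $y\in\{u,v\}$, whence $V(T_x)\cap V(T_w)\supseteq V(T_x)\cap V(T_y)\neq\emptyset$.

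It remains to bound, for each edge $e$ of $T$, the number of subtrees in the new family that contain $e$. Passing from the old family to the new one removes $T_u$ and $T_v$ and adds $T_w$, and $e\in E(T_w)$ precisely when $e\in E(T_u)$ or $e\in E(T_v)$. Hence the number of subtrees through $e$ drops by $1$ if $e$ belongs to both $T_u$ and $T_v$, and is unchanged otherwise; in particular it never increases, so it stays at most $k$. Therefore $(T,\{T_x\}_{x\in V(G/uv)})$ is a tree-representation of $G/uv$ of width at most $k$, and Theorem~\ref{thm:newchar} yields $\mmw(G/uv)\le k=\mmw(G)$.

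This is mostly bookkeeping; the one genuine point is that $T_u\cup T_v$ is connected, which is exactly where the hypothesis $uv\in E(G)$ is used, and one should be careful that $G/uv$ is regarded as a simple graph so that property~(1) is the only constraint its edges impose. I do not anticipate a real obstacle beyond respecting the definition of a tree-representation (nontrivial subtrees, subcubic host tree) throughout.
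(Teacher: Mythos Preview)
Your argument is correct and is essentially the same as the paper's: take a width-$\mmw(G)$ tree-representation, replace $T_u$ and $T_v$ by their union, and appeal to Theorem~\ref{thm:newchar}. You simply spell out in more detail why $T_u\cup T_v$ is a nontrivial subtree and why the edge-load does not increase, which the paper leaves implicit.
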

\begin{proof}
Let $(T,\{T_x\}_{x\in V(G)})$ be a tree-representation of $G$ having width $\mmw(G)$.
Let $T_{uv}$ be the subtree of $T$ with vertex set $V(T_u)\cup V(T_v)$ and edge set $E(T_u)\cup E(T_v)$.
Then $(T,\{T_x\}_{x\in V(G)\setminus\{u,v\}}\cup\{T_{uv}\})$ is a tree-representation of $G/uv$ having  width at most $\mmw(G)$.
By Theorem \ref{thm:newchar}, $\mmw(G/uv) \leq \mmw(G)$.
\end{proof}

\begin{corollary}\label{coro:closed}
Let $k$ be an integer. The set $M_k = \{ G: \mmw(G)\le k \}$ is closed under the minor operations.
\end{corollary}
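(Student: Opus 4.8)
The plan is to deduce the corollary directly from the two monotonicity facts established just above: that $\mmw(G\setminus x)\le\mmw(G)$ for any vertex or edge $x$ of $G$, and that $\mmw(G/uv)\le\mmw(G)$ for any edge $uv$ of $G$ (the preceding Lemma). Both of these say that a single elementary minor operation never increases the maximum matching width, and a minor is by definition nothing more than the result of a finite sequence of such operations.

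First I would recall the definition of a minor: $H$ is a minor of $G$ exactly when there is a finite sequence $G=G_0,G_1,\dots,G_\ell=H$ in which each $G_{i+1}$ is obtained from $G_i$ by a single vertex deletion, a single edge deletion, or a single edge contraction. Then I would argue by induction on the length $\ell$ of this sequence. The base case $\ell=0$ is trivial. For the inductive step, the two facts above give $\mmw(G_{i+1})\le\mmw(G_i)$ in each of the three cases, so $\mmw(H)=\mmw(G_\ell)\le\mmw(G_0)=\mmw(G)$. Consequently, if $G\in M_k$ then $\mmw(H)\le\mmw(G)\le k$, i.e.\ $H\in M_k$; since $G$ is itself a minor of $G$, this shows that $M_k$ is closed under taking minors.

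The only point deserving a word of care is that all graphs here are simple, so an edge contraction is understood to also delete any loop or parallel edge it creates. This is, however, precisely the setting of the preceding Lemma, whose tree-representation argument (replacing $T_u$ and $T_v$ by their union $T_{uv}$) never refers to edge multiplicities and therefore delivers the inequality $\mmw(G/uv)\le\mmw(G)$ for simple graphs verbatim. I do not expect any genuine obstacle: the corollary is a short bookkeeping consequence of the Lemma together with the observation on vertex/edge deletion stated immediately before it, and the proof is a one-line induction once the definition of a minor is unwound.
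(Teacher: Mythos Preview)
Your proposal is correct and matches the paper's approach: the corollary is stated without proof in the paper because it follows immediately from the deletion observation and the contraction lemma just above it, exactly as you argue. Your induction on the length of the minor sequence is the standard one-line unpacking of this.
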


By Corollary~\ref{coro:closed} and Robertson-Seymour theorem~\cite{RS04}, $M_k$ has a finite minor obstruction set for each $k$. We can easily find the minor obstruction set when $k=1$.

\begin{proposition}[\cite{STpersonal}]
A graph $G$ has \MMW at most $1$ if and only if $G$ does not contain $C_4$ as a minor.
\end{proposition}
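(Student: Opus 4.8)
The plan is to use the tree-representation characterization of Theorem~\ref{thm:newchar} in both directions. For the forward direction, suppose $\mmw(G)\le 1$; we want to show $G$ has no $C_4$-minor. Since the class $M_1$ is minor-closed by Corollary~\ref{coro:closed}, it suffices to show $\mmw(C_4)\ge 2$, i.e. $C_4\notin M_1$. This is a small finite check: in any tree-representation $(T,\{T_x\})$ of $C_4$ with the four subtrees nontrivial and pairwise-adjacent-for-edges, one finds that some edge of $T$ must lie in at least two of the subtrees, because a nontrivial subtree occupies an edge of $T$ and the overlap conditions forced by the four edges of $C_4$ cannot all be met edge-disjointly on a subcubic tree. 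I would verify this directly, perhaps by observing that width $1$ forces the subtrees to be pairwise edge-disjoint, so each $T_x$ is a single edge, and then the adjacency condition collapses the cycle.

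For the converse, suppose $G$ has no $C_4$-minor; I want to build a tree-representation of width $1$. The key structural fact is that a graph with no $C_4$-minor has a very restricted block structure: every block is either a single edge (a bridge) or a triangle $K_3$. (A $2$-connected graph on at least four vertices, or even $K_4$ minus an edge / any longer cycle, contains a $C_4$-minor; only $K_3$ and $K_2$ survive as $2$-connected pieces.) So $G$ is, up to isolated vertices, a "cactus-like" graph whose blocks are edges and triangles glued along cut-vertices in a tree-like fashion. Given this, I would construct $T$ and the subtrees explicitly, processing the block-cut tree: represent each edge $uv$ of a bridge-block by a shared edge of $T$ assigned only to $T_u$ and $T_v$, and represent each triangle by a small subcubic gadget (a path with three edges suffices) on which the three vertex-subtrees pairwise share exactly one distinct vertex of $T$ but no edge is doubly covered; then merge these gadgets at cut-vertices by identifying the appropriate endpoints, extending the relevant subtrees by one more edge so they stay nontrivial. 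One checks that each edge of $T$ is used by at most one subtree, giving width $1$, and that subtrees for adjacent vertices share a vertex of $T$.

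The main obstacle is the converse direction, specifically two points: first, establishing cleanly that $C_4$-minor-free graphs have only $K_2$ and $K_3$ as blocks (this needs a short argument that any other $2$-connected graph contains $C_4$ as a minor — contract a spanning tree of a large block down, or directly exhibit the minor), and second, handling the gluing at cut-vertices so that nontriviality of every subtree is preserved while keeping every edge of $T$ covered at most once. The bridge case is easy, but at a cut-vertex $v$ shared by several triangles and bridges one must route the subtree $T_v$ through all the corresponding gadgets of $T$ without forcing any edge of $T$ into two subtrees; since $T$ must be subcubic, some care is needed in laying out the gadgets around $v$, but because the overlaps at each gadget are confined to vertices of $T$ (never edges), there is enough room, and a careful induction on the block-cut tree closes the argument.
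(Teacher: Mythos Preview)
Your overall strategy is sound and reaches the same conclusion, but the paper's argument is considerably shorter and organized differently on both halves.

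For the forward direction, the paper does not use the tree-representation characterization at all. Instead it argues directly with branch-decompositions: if $G$ has a $C_4$-minor, pick the four branch vertices $v_1,v_2,v_3,v_4$ and the four internally disjoint paths; in any subcubic tree $T$ there is an edge whose induced partition $(A,B)$ puts two of the $v_i$ on each side, and then two of the four paths give two vertex-disjoint $A$--$B$ paths, hence a matching of size $2$ in $G[A,B]$. Your plan (show $\mmw(C_4)\ge 2$ via tree-representations and invoke minor-closedness) also works, but your sketch contains a slip: edge-disjointness of the subtrees does \emph{not} force each $T_x$ to be a single edge. The clean way to finish your version is to use the Helly property and chordality of subtree intersection graphs: a chord of the $C_4$ forces two triples of subtrees with common points $w_1,w_2$, and either $w_1=w_2$ (four edge-disjoint nontrivial subtrees through a vertex of degree $\le 3$, impossible) or $w_1\neq w_2$ (two edge-disjoint subtrees share the $w_1$--$w_2$ path, impossible).

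For the converse, the paper simply invokes the fact that $\mmw(G)$ equals the maximum of $\mmw$ over the blocks of $G$, observes that $K_2$ and $K_3$ have mm-width $1$, and stops. You instead propose to build the width-$1$ tree-representation by hand along the block--cut tree. That is workable, but your gadget for a triangle is wrong: a path on three edges does \emph{not} make all three subtrees pairwise meet (the two end edges are disjoint). Use a star $K_{1,3}$ instead, with each vertex of the triangle assigned one leaf-edge; all three subtrees then meet at the center and no edge is doubly covered. With that fix, the gluing at cut-vertices goes through by an induction on the block--cut tree, though you are right that keeping $T$ subcubic while extending $T_v$ through every incident block requires inserting a binary caterpillar at each cut-vertex rather than a high-degree star. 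The paper's route avoids all of this bookkeeping by appealing to the block reduction once.
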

\begin{proof}
Suppose that $G$ contains $C_4$ as a minor. We can find four vertices $v_1,v_2,v_3,v_4$ of $G$ and four paths $P_{12}, P_{23}, P_{34}, P_{41}$ in $G$ such that each path $P_{ij}$ is a path from $v_i$ to $v_j$ and the four paths are pairwise internally vertex-disjoint.
For every branch-decomposition $(T,\mathcal{L})$ of $G$, there exists an edge $e$ in $T$ that  induces a partition $(A,B)$ of $V(G)$ such that two vertices from $v_1,v_2,v_3,v_4$ are in $A$ and the other two are in $B$. Thus, there exist two vertex-disjoint paths from $A$ to $B$. 
This implies that the $\mm_G$-value of $e$ is at least $2$, and therefore $G$ has \MMW at least $2$.

Now let us suppose that $G$ does not contain $C_4$ as a minor. It is easy to see that every block of $G$ is either $K_2$ or $C_3$.
The \MMW of $G$ is the maximum value among the mm-widths of blocks of $G$.
Since both $K_2$ and $C_3$ have \MMW $1$, $G$ has \MMW at most $1$.
\end{proof}

\subsection{Tangle}\label{sec:tangle}

Before proving our main theorems, we shall introduce the notion of \emph{tangle}, which is useful in investigating the lower bounds of width-parameters.

Let $f$ be an integer-valued symmetric submodular function on the subsets of a finite set $X$.
An \emph{$f$-tangle} of order $k+1$ is a collection $\mathcal{T}$ of subsets of $X$ satisfying that
\begin{itemize}
\item[(T1)] for all $S\subseteq X$, if $f(S)\le k$, then one of $S$ and $X \setminus S$ is in $\mathcal{T}$,
\item[(T2)] if $S_1,S_2,S_3\in \mathcal{T}$, then $S_1\cup S_2\cup S_3 \neq X$,
\item[(T3)] for each $x\in X$, $X \setminus \{x\} \notin \mathcal{T}$.
\end{itemize}

Robertson and Seymour~\cite{RS1991} proved the following theorem. 
We use it in both Section~\ref{sec:mmw2} and Section~\ref{sec:grid}.

\begin{theorem}[\cite{RS1991}]\label{thm:tangle}
Let $f$ be an integer-valued symmetric submodular function on subsets of a finite set $X$.
The $f$-width of $X$ is larger than $k$ if and only if
there exists an $f$-tangle of order $k+1$.
\end{theorem}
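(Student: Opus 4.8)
The statement is a min--max duality, so the plan is to prove the two implications separately: the ``easy'' direction that an $f$-tangle $\mathcal{T}$ of order $k+1$ forces the $f$-width of $X$ to exceed $k$, and the ``hard'' direction that $f$-width exceeding $k$ produces such a tangle. I expect the first to be a short counting argument and the second to be the genuine obstacle, being the substantive content of the Robertson--Seymour theorem.

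For the easy direction I would argue by contradiction: fix an $f$-tangle $\mathcal{T}$ of order $k+1$ and suppose some branch-decomposition $(T,\mathcal{L})$ has $f$-width at most $k$. Every edge $e$ of $T$ then induces a partition whose two sides $A, X\setminus A$ satisfy $f(A)\le k$, so by (T1) at least one side lies in $\mathcal{T}$; both cannot, since then (T2) would be violated by $A$, $X\setminus A$ together with any member of $\mathcal{T}$. Hence I can orient $e$ toward the unique side lying in $\mathcal{T}$. At a leaf mapped to $x$ the two sides are $\{x\}$ and $X\setminus\{x\}$ with $f(\{x\})\le k$; (T3) rules out $X\setminus\{x\}$, so $\{x\}\in\mathcal{T}$ and the leaf-edge points at the leaf, giving every leaf in-degree $1$. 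At a degree-$3$ node $t$ the three subtrees hanging off $t$ have leaf-sets $A_1,A_2,A_3$ partitioning $X$; if all three edges pointed away from $t$ then $A_1,A_2,A_3\in\mathcal{T}$ with $A_1\cup A_2\cup A_3=X$, again contradicting (T2), so $t$ has in-degree at least $1$. Every node thus has in-degree at least $1$, whence $\abs{E(T)}=\sum_v \mathrm{indeg}(v)\ge\abs{V(T)}$, contradicting $\abs{E(T)}=\abs{V(T)}-1$ for the tree $T$.

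For the hard direction I would prove the contrapositive: if there is no $f$-tangle of order $k+1$, then $X$ admits a branch-decomposition of $f$-width at most $k$, built recursively by refining a nested family of low-order separations down to the singletons. The delicate point is that the naive strategy of splitting the current part along any separation $S$ with $f(S)\le k$ does not preserve the width bound, because submodularity only yields $f(A\cap S)\le f(A)+f(S)-f(A\cup S)\le 2k$ for the two new parts rather than $k$. Overcoming this is exactly where the argument becomes the substantive theorem: one must choose the splitting separations carefully, for instance minimizing a suitable potential and uncrossing them against previously chosen ones via $f(A)+f(B)\ge f(A\cup B)+f(A\cap B)$, and show that the refinement can always proceed. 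The main obstacle, and the heart of the proof, is verifying that a ``stuck'' configuration---a part that no separation of order at most $k$ can legitimately split---cannot occur: the collection of sets pointing away from such a part would satisfy (T1) by completeness of the orientation, (T3) because singletons are always splittable, and (T2) precisely by submodular uncrossing, thereby producing an $f$-tangle of order $k+1$ and contradicting the hypothesis. Thus both the existence of the decomposition and the consistency of the extracted tangle hinge on the submodular inequality, which is the crux I would concentrate on.
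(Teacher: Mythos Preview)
The paper does not prove this theorem at all: it is stated with the citation \texttt{[RS1991]} and used as a black box, so there is no ``paper's own proof'' to compare against. Your proposal is therefore not a reconstruction of anything in the paper but an outline of the original Robertson--Seymour argument.

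That said, your easy direction is correct and complete; the orientation/in-degree counting argument is the standard one. Your hard direction is, as you yourself flag, only a plan: you identify the right ingredients (refining a nested family of separations of order $\le k$, submodular uncrossing, and reading off a tangle from a ``stuck'' part), but the actual construction---in particular the precise induction that shows one can always split while keeping all boundary orders $\le k$, and the verification that the failure case really yields an object satisfying (T1)--(T3)---is not carried out. Since the paper treats the result as a citation, a full proof was never expected here; just be aware that what you have for the hard direction is an outline, not a proof.
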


The \emph{$k\times k$-grid}, denoted by $G_k$, is the graph with vertex set $V(G_k)=\{(i,j):1\le i,j \le k\}$ and edge set $E(G_k)=\{(i,j)(i',j'):\abs{i-i'}+\abs{j-j'}=1 \}$. Using Theorem~\ref{thm:tangle}, we show that the $3\times 3$-grid has \MMW $3$, as an example. 

\begin{lemma}\label{lem:tangle3}
The $3\times 3$-grid $G_3$ has an $\mm$-tangle of order $3$.
\end{lemma}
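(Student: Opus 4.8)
The plan is to exhibit an explicit $\mm_{G_3}$-tangle of order $3$, so that by Theorem~\ref{thm:tangle} the $\mm$-width of $V(G_3)$ exceeds $2$; combined with the known bound $\mmw(G_3)\le\max(\brw(G_3),1)=3$ this pins the value at $3$. By Theorem~\ref{thm:tangle} it suffices to produce a collection $\mathcal{T}$ of subsets of $X=V(G_3)$ satisfying (T1)--(T3) for $k=2$.

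First I would describe $\mathcal{T}$ as a ``big side'' collection: for a set $S\subseteq V(G_3)$ with $\mm_{G_3}(S)\le 2$, put $S$ into $\mathcal{T}$ precisely when the complement $V(G_3)\setminus S$ is ``small'' in a suitable sense — concretely, I expect the right choice to be that $S\in\mathcal{T}$ if and only if $\mm_{G_3}(S)\le 2$ and $V(G_3)\setminus S$ contains no cycle of $G_3$ (equivalently, $G_3[V(G_3)\setminus S]$ together with the boundary edges cannot ``carry'' a large matching back into $S$), or, what may be cleaner to verify, the orientation given by ``which side contains the center vertex $(2,2)$ and a majority of the outer $8$-cycle.'' I would first check (T3): for each vertex $x$, the set $V(G_3)\setminus\{x\}$ has $\mm$-value equal to $\deg_{G_3}(x)$, which is $2,3,$ or $4$; when it is $\le 2$ (the corner case, degree $2$) I must confirm that $x$ itself is the side placed in $\mathcal{T}$, i.e. the singleton side is the tangle side, so $V(G_3)\setminus\{x\}\notin\mathcal{T}$ — this is automatic from the ``big side'' description. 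Then (T1) is a tautology from the definition once I check that the two sides of every small cut are never both eligible, which follows because if $\mm(S)\le 2$ then at least one of $S,V\setminus S$ induces a forest-like boundary.

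The substantive step is (T2): I must show no three sets $S_1,S_2,S_3\in\mathcal{T}$ cover $V(G_3)$. For this I would argue that every $S\in\mathcal{T}$ misses a specific structural feature of the grid — for instance, each $S_i$ avoids at least one vertex of each of the three ``rows'' $\{(i,1),(i,2),(i,3)\}$, or more robustly, the complement $V(G_3)\setminus S_i$ must contain a ``blocking path'' from one side of the grid to the other; since any $\mm$-cut of value $\le 2$ cannot separate $G_3$ into two parts each containing three vertex-disjoint crossing paths (there are three disjoint paths between the top row and bottom row, and an $\mm$-value-$2$ cut can intersect at most two of them), the small side is confined to a strip. Three strips, each covering at most — I expect — $6$ of the $9$ vertices, or better, each being confined away from one full row, cannot union to all of $V(G_3)$ by a counting/pigeonhole argument on the $3\times 3$ pattern.

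The main obstacle I anticipate is getting the definition of $\mathcal{T}$ exactly right so that (T1) and (T2) hold simultaneously: the ``big side'' must be chosen canonically enough that for every cut of value $\le 2$ exactly one side qualifies (for (T1) and (T3)), yet the qualifying sides must be structurally meagre enough that three of them cannot cover $X$ (for (T2)). I would resolve this by first classifying all subsets $S$ with $\mm_{G_3}(S)\le 2$ — using that such $S$ cannot contain two vertex-disjoint paths to its complement in ``too many'' directions — showing the small side is always contained in one of a short list of ``corner triangles / edge strips,'' and then checking (T2) directly against that finite list. The rest is bookkeeping.
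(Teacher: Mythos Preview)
Your proposal has the right endgame but a tangled setup. A tangle of order $3$ must contain the \emph{small} side of each low-value separation, not the big one: axiom (T3) forces $V(G_3)\setminus\{x\}\notin\mathcal{T}$, so singletons go in and their complements stay out. Your ``big side'' description and the criterion ``$S\in\mathcal{T}$ iff $V(G_3)\setminus S$ contains no cycle'' are oriented the wrong way --- under that rule $V(G_3)\setminus\{x\}$ \emph{would} lie in $\mathcal{T}$, since its one-vertex complement is acyclic --- and you tacitly reverse yourself when you verify (T3) by putting the singleton in. The alternative rule about ``the side containing $(2,2)$ and a majority of the outer cycle'' is also the wrong way round, and is never made precise enough to check (T1).

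Your last paragraph is where you land on the paper's argument, and it is far simpler than the structural detours about blocking paths and strips. The paper just writes $\mathcal{T}$ down: all subsets of size at most $2$, together with the four corner triples
\[
\{(1,1),(1,2),(2,1)\},\ \{(1,2),(1,3),(2,3)\},\ \{(2,3),(3,2),(3,3)\},\ \{(2,1),(3,1),(3,2)\}.
\]
For (T1) one checks that every $4$-element subset of $V(G_3)$ already has $\mm\ge 3$, and that among $3$-element subsets exactly these four have $\mm\le 2$. For (T2) the single observation is that no member of $\mathcal{T}$ contains the centre $(2,2)$, so three of them cannot cover $V(G_3)$. (T3) is immediate since every member has at most three vertices. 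Your anticipated ``short list of corner triangles'' is precisely this, and once you have it the verification is one line per axiom --- no row/column or path-counting argument is needed at $k=3$.
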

\begin{proof}
Let us consider $G_3$ to be a part of an integer grid in the real plane and let $\{(i,j): 1 \leq i,j \leq 3 \}$ be the vertex set of $G_3$.
Let $A$ be a set of all subsets of $V(G_3)$ with size at most $2$.
Let $B=\{\{(1,1),(1,2),(2,1)\},$ $\{(1,2),(1,3),(2,3)\},$ $\{(2,3),(3,2),(3,3)\},$ $\{(2,1),(3,1),(3,2)\}\}$.
We claim that $A\cup B$ is an $\mm$-tangle of order $3$.
It is trivial that (T3) holds. If $S_1 \cup S_2 \cup S_3 = V(G_3)$, then the sets $S_1, S_2, S_3$ must be in $B$. However, no set in $B$ has $(2,2)$ and thus (T2) follows. Now we check (T1). Note that for every subset $S\subseteq V$ with $\abs{S} = 4$, we have $\mm(S) \geq 3$.
Since $A$ contains all subsets of size at most $2$,
we need to consider subsets of $V(G_3)$ of size $3$.
The elements in $B$ are the only subsets of size $3$ having $\mm(S)\le 2$.
Hence (T1) holds too and $A \cup B$ is a $\mm$-tangle of order $3$.
\end{proof}

By Lemma~\ref{lem:tangle3} and Theorem~\ref{thm:tangle}, the $3 \times 3$-grid has \MMW at least $3$.
It is easy to see that the $3\times 3$-grid has \MMW at most $3$ since it has $9$ vertices and $K_9$ has a tree-representation of width $3$.
Thus the $3\times 3$-grid has \MMW $3$.
In this paper, we use a similar argument to verify that 
the graphs in the minor obstruction set for \MMW at most $2$ 
has \MMW $3$.
Note that the $3\times 3$-grid is also in the minor obstruction set for the graphs of \MMW at most $2$. See Figure~\ref{fig:o52}.

\section{Minor obstruction set for maximum matching width at most $2$}\label{sec:mmw2}

Note that if $G$ is not $2$-connected, then $\mmw(G)$ is the maximum of $\mmw(H)$ where $H$ is a maximal $2$-connected subgraph of $G$.
Thus the graphs in the minor obstruction set are $2$-connected.

In Section 3.1 we identify the 3-connected graphs that are minor-minimal with respect to mm-width $\geq 3$. And then we consider the minor-obstructions with 2-cuts in Section 3.2. We shall show that each 2-cut separates the graph into at most three components, where all but one components are small (a full characterization is given after Lemma 3.7). And we show that the obstructions are obtained from a 3-connected graph with $\leq 6$ vertices by replacing some edges with small components mentioned above. What remains is to check all the candidates.

\subsection{3-connected graphs}

In this subsection, we give five $3$-connected graphs that have \MMW $3$ and whose proper minors have \MMW $2$.

Let $T$ be a subcubic tree. We can always find an edge of $T$ whose removal divides the set of leaves into two subsets, each having at least $1/3$ of all the leaves. 
Let $e=uv$ be an edge that induces a partition $(A,B)$ of the leaves where $u$ is on the side of $A$. Suppose that $A$ contains more than $2/3$ of the leaves. Then $u$ has degree $3$ and the other two edges at $u$ induce leaf partitions, namely $(A_1, B_1)$ and $(A_2, B_2)$ where we assume $u$ to be on the side of $A_1$ and $A_2$ respectively. We choose the edge, say $e'$, with larger $|A_i|$. 
If both $A_1$ and $A_2$ contain at most $2/3$ of the leaves then $e'$ will be the edge we are after. Otherwise, we have a partition with smaller difference $|A_i| - |B_i|$ than $|A| - |B|$ and we iterate until we find a working edge.

Therefore, a subcubic tree with at least $7$ leaves has an edge dividing the leaves into two sets such that both have size at least $3$.

\begin{lemma} \label{lem:3conn mmw}
If a graph $G$ is $3$-connected and $G$ has at least $7$ vertices, then $\mmw(G)\ge 3$.
\end{lemma}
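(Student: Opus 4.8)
The plan is to argue by contradiction using the tree-representation characterization (Theorem~\ref{thm:newchar}) together with the balanced-edge observation established in the paragraph immediately preceding the lemma and König's theorem. Suppose $\mmw(G)\le 2$ and fix a branch-decomposition $(T,\mathcal L)$ of $V(G)$ of $\mm_G$-width at most $2$. Since $T$ is subcubic and its leaves are in bijection with $V(G)$, the tree $T$ has at least $7$ leaves. By the observation just above the lemma, a subcubic tree with at least $7$ leaves has an edge $e$ whose removal splits the leaves into two parts each of size at least $3$; let $(A,B)$ be the corresponding partition of $V(G)$, so $|A|\ge 3$, $|B|\ge 3$, and $\mm_G(A)=\mm_G(B)\le 2$ because $(T,\mathcal L)$ has $\mm_G$-width at most $2$.

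Next I would turn the bound $\mm_G(A)\le 2$ into a small vertex cut. The graph $G[A,B]$ is bipartite, so by König's theorem a maximum matching of size at most $2$ gives a vertex cover $C\subseteq V(G)$ of $E_G[A,B]$ with $|C|\le 2$. Then $G\setminus C$ has no edge between $A\setminus C$ and $B\setminus C$, and both of these sets are nonempty since $|A|,|B|\ge 3>|C|$. Hence $G\setminus C$ is disconnected (and if $C=\emptyset$ then $G$ itself is disconnected), so $G$ has a vertex cut of size at most $2$, contradicting $3$-connectivity. Therefore $\mmw(G)\ge 3$. (Alternatively, one could avoid König entirely: $3$-connectivity and $|A|,|B|\ge 3$ yield, via Menger, three vertex-disjoint $A$–$B$ paths, each of which contains an edge of $E_G[A,B]$, and disjointness of the paths makes these three edges a matching in $G[A,B]$, so $\mm_G(A)\ge 3$.)

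I do not expect a real obstacle here: the only delicate point is producing the ``balanced'' edge of $T$, and that is exactly the content of the paragraph preceding the lemma, with the hypothesis $|V(G)|\ge 7$ ensuring that ``at least a third of the leaves'' is at least $3$ on both sides. Everything else is a one-line application of König's theorem (or Menger) and the definition of $3$-connectivity, so the argument should close in a few lines.
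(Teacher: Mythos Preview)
Your proposal is correct and follows essentially the same approach as the paper: both use the balanced-edge observation to find a partition $(A,B)$ with $|A|,|B|\ge 3$, and then exploit $3$-connectivity to force $\mm_G(A)\ge 3$. The only cosmetic difference is that the paper uses Menger directly to produce three vertex-disjoint $A$--$B$ paths (your parenthetical alternative), whereas your primary argument goes through the dual K\"onig formulation; both are one-line finishes.
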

\begin{proof}
By the argument above, for every branch decomposition $(T,\mathcal{L})$ of $V(G)$, we can find an edge $e$ in $T$ inducing a partition $(A,B)$ with $\abs{A},\abs{B} \geq 3$.
Since $G$ is $3$-connected, by Menger's theorem, $G$ has three vertex-disjoint paths between $A$ and $B$.
These paths give a matching of size $3$ in $G[A,B]$, which means that the $\mm_G$-value of $e$ is at least $3$.
Thus, every branch-decomposition of $V(G)$ has $\mm_G$-width at least $3$.
\end{proof}

It is easy to find a tree-representation of $K_{3n}$ with width $n$. 
In particular, $K_6$ has \MMW $2$ and hence every graph on $6$ vertices has \MMW at most $2$. 
In other words, the forbidden minors for \MMW at most $2$ have at least $7$ vertices.
We use the Tutte's wheel theorem stated below. In the following statement we assume pairwise parallel edges occuring from contractions are all removed but one to keep the graph simple.

\begin{theorem}[The Tutte's wheel theorem \cite{Tutte1961}]
If a graph $G$ is $3$-connected, then $G$ has an edge $e$ such that either $G/e$ or $G\setminus e$ is $3$-connected unless $G=K_4$.
\end{theorem}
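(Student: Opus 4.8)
The plan is to reduce the statement to the standard fact that \emph{every $3$-connected graph with at least five vertices has an edge $e$ for which $G/e$ is again $3$-connected}. This is stronger than the claim (it never needs the deletion option), and it suffices, because a $3$-connected graph has at least four vertices and the only one on exactly four vertices is $K_4$. So assume $\abs{V(G)}\ge 5$ and, for contradiction, that $G/e$ is not $3$-connected for any edge $e$. First I would record the local structure this forces: if $G/xy$ is not $3$-connected then it has a separating set of size at most $2$, and since $G$ is $3$-connected this set must contain the vertex created by contracting $xy$; hence it equals $\{v_{xy},z\}$ for some $z\notin\{x,y\}$, and $\{x,y,z\}$ is then a $3$-separator of $G$ with $xy\in E(G)$. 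Thus every edge of $G$ lies in such a ``bad triple''.

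Next I would run an extremal argument. Over all bad triples $\{x,y,z\}$ (with $xy\in E(G)$) and all components of $G\setminus\{x,y,z\}$, pick the one, say the component $C$, with $\abs{V(C)}$ as small as possible. Because $G$ is $3$-connected, every component of $G$ minus a $3$-separator must be adjacent to all three separator vertices --- otherwise the other two would already cut that component off, a separator of size at most $2$ --- so in particular $z$ has a neighbour $v\in V(C)$. Applying the contradiction hypothesis to the edge $zv$ yields a vertex $w\notin\{z,v\}$ with $\{z,v,w\}$ a bad triple, so $G\setminus\{z,v,w\}$ is disconnected. The goal is now to produce a component $D$ of $G\setminus\{z,v,w\}$ with $V(D)\subseteq V(C)\setminus\{v\}$: since $\{z,v,w\}$ is again a bad triple, $\abs{V(D)}\le\abs{V(C)}-1<\abs{V(C)}$ contradicts the minimality of $C$, closing the argument.

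To get such a $D$ I would show that every vertex of $V(G)\setminus(V(C)\cup\{z,v,w\})$ lies in one single component $H$ of $G\setminus\{z,v,w\}$, so any further component is forced inside $V(C)\setminus\{v,w\}$. Here $\{x,y\}$ is connected (as $xy\in E(G)$) and avoids $z,v$; for each component $C'\neq C$ of $G\setminus\{x,y,z\}$ one attaches $C'\setminus\{w\}$ to $\{x,y\}$ inside $G\setminus\{z,v,w\}$, repeatedly invoking the principle that a vertex set whose $G$-neighbourhood has size at most $2$ cannot exist (to see that every piece of $C'\setminus\{w\}$ still meets $\{x,y\}$, and that a whole component of $G\setminus\{x,y,z\}$ cannot survive as a component of $G\setminus\{z,v,w\}$). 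I expect this last step to be the genuinely fiddly part: one has to track where $w$ can be --- in $\{x,y\}$, in some $C'$, or in a third component of $G\setminus\{x,y,z\}$ --- and in each case push all of $G\setminus\{x,y,z\}$ except $C$ into the single side $H$. Once that is nailed down, the extremal contradiction finishes the contraction statement, and with the reduction of the first paragraph the theorem follows.
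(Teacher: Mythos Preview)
The paper does not prove this theorem at all: it is quoted from \cite{Tutte1961} as an external tool and used immediately in the proof of Lemma~\ref{lem:O3}. So there is no ``paper's own proof'' to compare against.

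As for your sketch itself, it is the standard Thomassen-style argument for the stronger contraction-only statement, and the strategy is sound. Two small comments on the part you flag as fiddly. First, you should also list the case $w\in V(C)$ (not only $w\in\{x,y\}$ or $w$ in some other component); this is in fact the easiest case, since then the whole of $V(G)\setminus(V(C)\cup\{z\})$ is untouched by removing $\{z,v,w\}$ and stays connected via the edge $xy$. Second, for $w$ lying in some $C'\neq C$, the clean way to handle each piece $P$ of $C'\setminus\{w\}$ is to note $N_G(P)\subseteq\{x,y,z,w\}$ and invoke $3$-connectivity to force $\abs{N_G(P)}\ge 3$, hence $N_G(P)\cap\{x,y\}\neq\emptyset$; that attaches $P$ to the $xy$-side without any further case analysis. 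With these points recorded, the extremal argument goes through exactly as you describe: the second component $D$ of $G\setminus\{z,v,w\}$ is forced into $V(C)\setminus\{v\}$, giving $\abs{V(D)}<\abs{V(C)}$ for the bad triple $\{z,v,w\}$ (with $zv\in E(G)$), contradicting the choice of $C$.
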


\begin{figure}[h!]
\center
\includegraphics[scale=0.6]{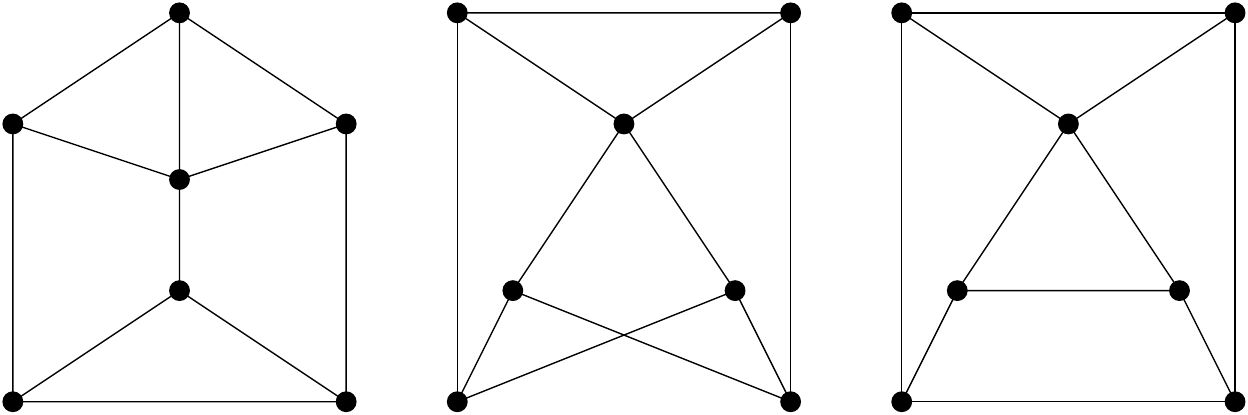}
\qquad
\includegraphics[scale=0.6]{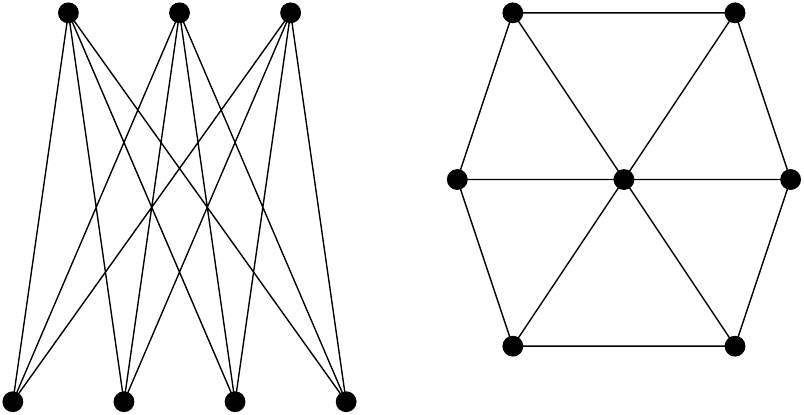}
\caption{The minor-minimal $3$-connected graphs on $7$ vertices}\label{fig:3conn}
\end{figure}

\begin{lemma}\label{lem:O3}
Let $\mathcal{O}_3$ be the set of the five graphs in Figure~\ref{fig:3conn}.
A $3$-connected graph is minor-minimal with respect to maximum matching width at least $3$ if and only if 
it is in $\mathcal{O}_3$.
\end{lemma}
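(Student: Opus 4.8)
The plan is to reduce the statement to a finite verification on $7$-vertex graphs, combining Lemma~\ref{lem:3conn mmw}, the fact recorded above that every graph on at most $6$ vertices has \MMW at most $2$ (since $\mmw(K_6)=2$), and Tutte's wheel theorem. Let $G$ be a $3$-connected graph that is minor-minimal with respect to \MMW $\ge 3$; then $\mmw(G)\ge 3$, so $\abs{V(G)}\ge 7$, because a graph on at most $6$ vertices is a subgraph of $K_6$ and hence has \MMW at most $2$. If $\abs{V(G)}\ge 8$, then $G\ne K_4$, so Tutte's wheel theorem yields an edge $e$ with $G/e$ or $G\setminus e$ still $3$-connected; in either case this is a \emph{proper} minor of $G$ that is $3$-connected on at least $7$ vertices, hence of \MMW $\ge 3$ by Lemma~\ref{lem:3conn mmw} --- contradicting minor-minimality. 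Therefore $\abs{V(G)}=7$, and moreover $\mmw(G)=3$ exactly, since $G$ is a subgraph of $K_9$ and $K_9$ has a tree-representation of width $3$.

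Next I would reduce minor-minimality to a condition on single-edge deletions. Every proper minor of a $7$-vertex graph $G$ is a minor of $G\setminus e$, of $G\setminus v$, or of $G/e$ for some edge $e$ or vertex $v$. The graphs $G\setminus v$ and $G/e$ have at most $6$ vertices, so they and all their minors have \MMW at most $2$ (using Corollary~\ref{coro:closed}). Hence a $3$-connected graph $G$ on $7$ vertices is minor-minimal with respect to \MMW $\ge 3$ if and only if $\mmw(G\setminus e)\le 2$ for every edge $e$ of $G$. In particular such a $G$ is necessarily \emph{minimally} $3$-connected: otherwise some $G\setminus e$ would be $3$-connected on $7$ vertices, hence of \MMW $\ge 3$ by Lemma~\ref{lem:3conn mmw}.

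The remaining task is the finite check. I would first list the minimally $3$-connected graphs on $7$ vertices: each such $G$ is $\ne K_4$, so by Tutte's wheel theorem some $G/e$ or $G\setminus e$ is $3$-connected, and minimality rules out the second option, so $G/e$ is $3$-connected on $6$ vertices --- that is, $G$ is a vertex split of a $3$-connected graph on $6$ vertices. Thus the list is obtained by performing all admissible vertex splits on the finitely many $3$-connected graphs on $6$ vertices and keeping the minimally $3$-connected outcomes. For each graph $G$ on this list: if $G$ is one of the five graphs in Figure~\ref{fig:3conn}, I would certify $\mmw(G\setminus e)\le 2$ --- checking one representative $e$ per edge orbit --- by exhibiting, via Theorem~\ref{thm:newchar}, a subcubic tree together with a family of nontrivial subtrees through which at most two subtrees pass per edge; if $G$ is not one of the five, I would exhibit a single edge $e$ with $\mmw(G\setminus e)\ge 3$, certified either by finding one of the five graphs (or the $3\times 3$-grid) as a minor of $G\setminus e$, or by producing an $\mm$-tangle of order $3$ in $G\setminus e$ as in Lemma~\ref{lem:tangle3} and applying Theorem~\ref{thm:tangle}. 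In the first case $G\in\mathcal{O}_3$ is an obstruction; in the second $G$ is not minor-minimal.

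The main obstacle is this last step. The enumeration of minimally $3$-connected graphs on $7$ vertices is routine but needs care, and the genuinely delicate part is producing the width-$2$ tree-representations for the roughly forty graphs $G\setminus e$ coming from the five candidates: the ``at most two subtrees per edge'' requirement is fragile, and each deletion essentially needs its own small construction. Certifying \MMW $\ge 3$ for the non-candidates is comparatively painless --- for most of them a well-chosen edge deletion remains $3$-connected on $7$ vertices, so Lemma~\ref{lem:3conn mmw} applies immediately, and only a handful of cases require writing out a tangle.
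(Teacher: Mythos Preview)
Your reduction to the $7$-vertex, minimally $3$-connected case is exactly the paper's argument. The difference lies entirely in the execution of the finite check, where the paper is considerably more economical on two points.

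First, your ``non-candidate'' case is empty: the five graphs in Figure~\ref{fig:3conn} are \emph{all} of the minimally $3$-connected graphs on seven vertices (the paper simply cites an atlas for this, and your own enumeration via vertex splits of the $3$-connected $6$-vertex graphs would confirm it). Incidentally, your remark that ``for most non-candidates a well-chosen edge deletion remains $3$-connected'' could not apply in any case, since by definition no edge deletion of a minimally $3$-connected graph stays $3$-connected.

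Second, the forty-odd individual tree-representations you anticipate collapse to a single observation. In each of the five graphs, every edge is incident with a vertex of degree~$3$; hence $G\setminus e$ always contains a vertex $v$ of degree~$2$. The graph $(G\setminus e)\setminus v$ has six vertices and therefore admits a width-$2$ tree-representation, and a degree-$2$ vertex can always be reinserted into such a representation (by arranging leaves for its two neighbours to be adjacent and placing $T_v$ there) without increasing the width. This disposes of all $G\setminus e$ uniformly, so no case-by-case constructions are needed.
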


\begin{proof}
By the Tutte's wheel theorem, a $3$-connected graph with at least $8$ vertices has a proper $3$-connected minor with at least $7$ vertices, which has \MMW at least $3$ by Lemma \ref{lem:3conn mmw}. Thus a minor-minimal $3$-connected graph has precisely $7$ vertices. By~\cite{read1998atlas}, the five graphs in Figure~\ref{fig:3conn} are precisely the edge-minimal $3$-connected graphs on $7$ vertices, and hence it is enough to show that the proper minors of these graphs all have \MMW at most $2$.

Observe that all edges of a graph in $\mathcal{O}_3$ are incident with a vertex of degree $3$. Thus by taking out the edge we have a graph on $7$ vertices with at least one vertex of degree $2$, say $v$. Starting from a tree-representation of $G\setminus v$ with width $2$, by rearranging the leaves if needed, we can easily add a vertex $v$ of degree $2$ without increasing mm-width, so such a graph must have \MMW $2$.
\end{proof}

\subsection{2-connected graphs}
Now we find $2$-connected minor-minimal graphs with respect to \MMW $3$ that are not 3-connected.
Let $\mathcal{O}_2$ be the set of all graphs $G$ such that $G$ is not $3$-connected and $G$ is minor-minimal with respect to \MMW at least $3$. Note that the graphs in $\mathcal{O}_2$ are 2-connected.

Let $G$ be a graph and let $a,b \in V(G)$. 
We say that a tree-representation of $G$ is \emph{good} 
if there exist two vertices $a$ and $b$ such that the subtrees for $a$ and $b$ share an edge and the width of the tree-representation is $2$. A pair $(G, \{a,b\})$ is \emph{good} if it has a corresponding good tree-representation with vertices $a$ and $b$, and \emph{bad} if none exists.

\begin{lemma} \label{lem:bad+p4}
Let $G$ be a graph and let $a,b \in V(G)$. Let $H$ be the graph obtained from $G$ by adding two new vertices, say $c$ and $d$, and edges $ac, cd$ and $db$, followed by removing the edge $ab$ if $ab \in E(G)$. If $(G,\{a,b\})$ is bad, then $\mmw(H) \geq 3$.
\end{lemma}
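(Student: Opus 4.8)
## Proof proposal

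The plan is to prove the contrapositive: assuming $\mmw(H) \le 2$, I will extract from a tree-representation of $H$ a good tree-representation of $(G,\{a,b\})$, contradicting badness. So fix a tree-representation $(T, \{T_x\}_{x \in V(H)})$ of $H$ of width at most $2$. The idea is that the path $a\text{--}c\text{--}d\text{--}b$ in $H$ forces the subtrees $T_a$ and $T_b$ to be "linked" in $T$ by the intermediate subtrees $T_c$ and $T_d$, and that $T_c, T_d$ are essentially unnecessary so they can be deleted and their role absorbed into an enlargement of $T_a$ and $T_b$ that makes them share an edge.

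Here are the steps I would carry out. First, restrict attention to the subtrees $T_x$ for $x \in V(G)$ together with $T_c, T_d$; properties (1) and (2) of Theorem~\ref{thm:newchar} still hold, and in particular for every edge $e$ of $T$ at most two of these subtrees contain $e$. Second, analyze the configuration of $T_a, T_c, T_d, T_b$: since $ac, cd, db \in E(H)$, consecutive subtrees in this list pairwise intersect, so $T_a \cup T_c \cup T_d \cup T_b$ is a connected subtree $S$ of $T$ (a union of pairwise-touching-in-a-chain subtrees is connected). Third — the crux — I want to show that within $S$ one can find an edge $f$ such that after deleting $T_c$ and $T_d$ and replacing $T_a$ by the component of $S \setminus (\text{interior of } T_c) $-type enlargement, and similarly $T_b$, the new $T_a'$ and $T_b'$ both contain $f$, while no edge of $T$ is now covered more than twice. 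The point is that along $S$, at any edge the count of subtrees among $\{T_a,T_c,T_d,T_b\}$ was at most $2$; if I can pick $f$ to be an edge covered by $T_c$ and $T_d$ (or covered by exactly one of $T_c,T_d$ and one of $T_a,T_b$, or by neither, at the "seam" between the $a$-side and the $b$-side of $S$), then extending $T_a$ toward $f$ and $T_b$ toward $f$ replaces the two slots used by $T_c,T_d$ with the two slots used by $T_a',T_b'$, keeping width $2$. Since $ab$ was removed when passing from $G$ to $H$, removing $T_c,T_d$ and not having $T_a,T_b$ forced to touch is exactly right for $G$: we only need them to touch in the modified decomposition, which is what we engineer. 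One must also handle the degenerate cases where $T_c$ or $T_d$ is a single edge, or where $T_a$ and $T_b$ already intersect; those are easy.

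The main obstacle is step three: showing that the enlarged $T_a'$ and $T_b'$ share an edge \emph{without} pushing any edge-count above $2$. The delicate point is that $T_a$ and $T_b$ might reach far into $S$ on their own, so that the "free" edges vacated by deleting $T_c, T_d$ are not adjacent to where $T_a$ and $T_b$ currently end; one then has to argue that $T_a$ itself must already cover the stretch between it and the freed region (otherwise some edge there was covered only by $T_c$ and/or $T_d$, i.e.\ at most ... — here I would use that $T_c, T_d$ are nontrivial subtrees and a careful edge-by-edge bookkeeping along the path in $T$ between $T_a$ and $T_b$). Concretely I expect to route $f$ to be an edge on the $T$-path joining $V(T_a)$ and $V(T_b)$ that lies in $V(T_c) \cap V(T_d)$ or at the interface, show every such edge currently has count exactly $2$ (covered by a subset of $\{T_c,T_d\}$ possibly together with one of $T_a,T_b$), and then the swap is count-neutral. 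Once $f$ is found, define $T_a'$ as $T_a$ together with the portion of the $a$-to-$f$ path in $S$, and $T_b'$ symmetrically; verify (1) is preserved (no edge of $G$ is lost since we only enlarged subtrees and deleted $T_c,T_d$ which correspond to no vertex of $G$), verify (2), and conclude $(G,\{a,b\})$ is good — a contradiction.
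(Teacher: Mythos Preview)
Your overall strategy matches the paper's: assume $\mmw(H)\le 2$, take a width-$2$ tree-representation $(T,\{T_x\})$ of $H$, delete $T_c,T_d$, and enlarge $T_a,T_b$ so that they share an edge, yielding a good representation of $(G,\{a,b\})$. The paper also begins by noting that the three ``merged'' representations $(T_a\cup T_c\cup T_d,\,T_b)$, $(T_a\cup T_c,\,T_d\cup T_b)$, $(T_a,\,T_c\cup T_d\cup T_b)$ are width-$2$ representations of $G$, and since $(G,\{a,b\})$ is bad, each pair meets in exactly one vertex; this pins down single vertices $v_1=T_a\cap T_c$ and $v_2=T_d\cap T_b$.

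The genuine gap is in your step three. You plan to find an \emph{existing} edge $f$ of $T$ on the $v_1$--$v_2$ path to serve as the common edge of $T_a'$ and $T_b'$, arguing that the two ``slots'' at $f$ are currently occupied by some subset of $\{T_a,T_b,T_c,T_d\}$ and can therefore be reassigned. But nothing prevents every edge on this path from being covered by exactly one of $T_c,T_d$ together with some \emph{other} subtree $T_x$ with $x\notin\{a,b,c,d\}$. In that case, removing $T_c,T_d$ frees only one slot at each such edge, so putting \emph{both} $T_a'$ and $T_b'$ on $f$ drives its count to $3$. Your hoped-for claim that $f$ is ``covered by a subset of $\{T_c,T_d\}$ possibly together with one of $T_a,T_b$'' is exactly what can fail.

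The paper handles this by \emph{not} using an existing edge. It picks the first edge $e=v_1u$ on the $v_1$--$v_2$ path, subdivides it with a new vertex $v_3$, and attaches a fresh pendant path $v_3v_4v_5$. The subtrees $T_c',T_d'$ are parked on $v_4v_5$, while $T_a'$ and $T_b'$ are extended to share the brand-new edge $v_3v_4$, which by construction is covered by nothing else. Along the old path, $T_b'$ replaces whichever of $T_c,T_d$ was present, so every count stays at most $2$. This subdivision-and-pendant trick is the missing idea in your proposal.
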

\begin{proof}
We prove by contradiction. Suppose $\mmw(H) \leq 2$, that is, $H$ has a tree-representation $\mathcal{T} = (T,\{T_v\}_{v\in V(H)})$ of width at most $2$. We shall use $\mathcal{T}$ to find a good tree-representation of $G$ with $a$ and $b$, yielding a contradiction.

From $\mathcal{T}$ we may obtain three tree-representations of $G$ with width at most 2 by replacing the subtree for $a$ and $b$ respectively with (1) $T_a \cup T_c \cup T_d$ and $T_b$, (2) $T_a \cup T_c$ and $T_d \cup T_b$, and (3) $T_a$ and $T_c \cup T_d \cup T_b$.
Since $(G,\{a,b\})$ is bad, for all three choices the subtrees for $a$ and $b$ intersect at precisely one vertex in the new tree-representations. 
Therefore, $E(T_a) \cap E(T_b) = \emptyset$ and $T$ has two distinct vertices $v_1$ and $v_2$ such that $T_a \cap T_c = \{v_1\}$ and $T_d \cap T_b = \{v_2\}$.

Let $e=v_1u$ be the first edge in the unique path $P$ in $T$ from $v_1$ to $v_2$. Because of the path $acdb$ in $H$, the first few consecutive edges of $P$ (possibly zero) are in $T_c$ and the others are in $T_d$. We assume that $T_c$ contains $e$. The following manipulation can be done likewise when $T_d$ contains $e$.

\begin{figure}[h!]
\center
\includegraphics[scale=0.8]{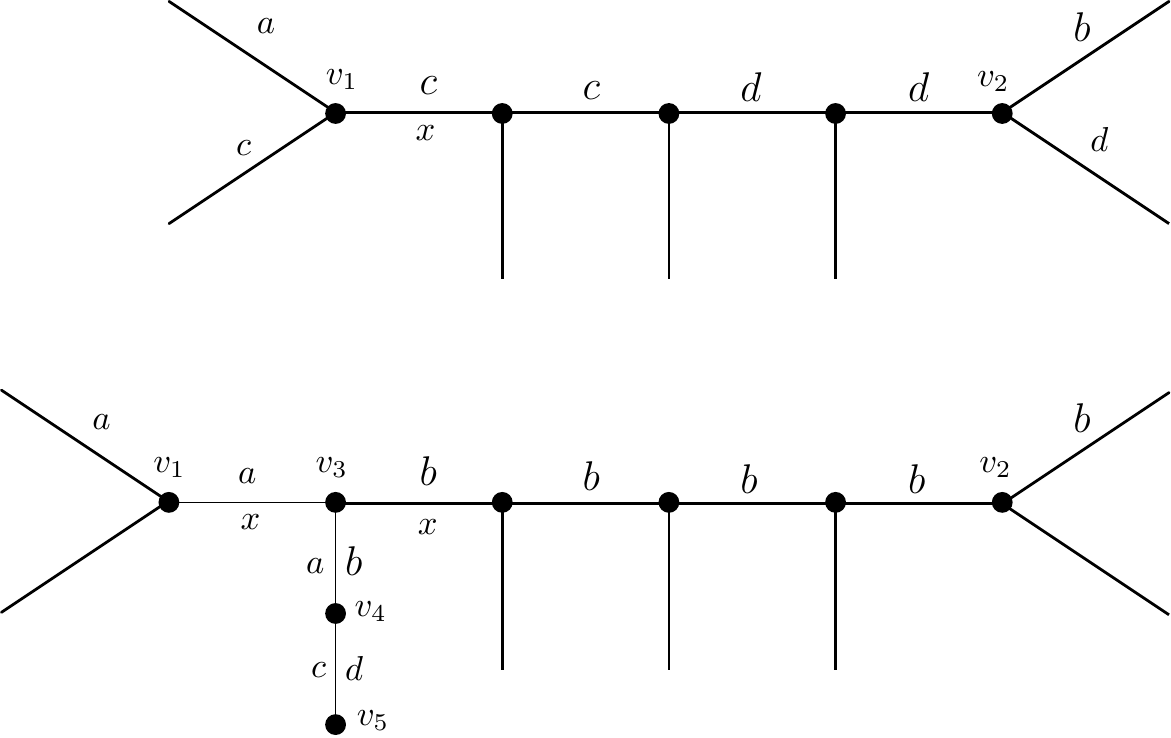}
\caption{Another tree-representation of the same graph where $T_a$ and $T_b$ share an edge}\label{fig:2conntrrep}
\end{figure}

Let $x \neq c$ be a vertex of $G$ such that $T_x$ contains $e$. If there is no such $x$ we ignore $x$ in the following. Let $T'$ be the tree obtained from $T$ by subdividing $e$, and adding a path $v_3 v_4 v_5$ of length $2$ at the new vertex $v_3$ obtained from the subdivision; see Figure \ref{fig:2conntrrep}. Let $\{T'_v\}_{v \in V(H)}$ be a collection of subtrees of $T'$ such that
\begin{itemize}
\setlength{\itemsep}{0pt}
\setlength{\parsep}{0pt}
\setlength{\parskip}{0pt}
\item $T'_c$ and $T'_d$ have only one edge $v_4 v_5$,
\item $T'_a$ is obtained from $T_a$ by adding the edges $v_1 v_3$ and $v_3 v_4$,
\item $T'_b$ is obtained from $T_b$ by adding $v_3 v_4$ and the edges on the path from $v_3$ to $v_2$, and
\item $T'_v = T_v$ for all other $v \in V(H)$.
\end{itemize}

Note that for each pair of vertices $u,v$ in $V(H) \setminus \{c,d\}$, if $T_u \cap T_v \neq \emptyset$ then $T'_u \cap T'_v \neq \emptyset$. Since $c$ and $d$ are adjacent to only $a$ and $b$ in $H$, the pair $\mathcal{T'} = (T', \{T'_v\}_{v \in V(H)})$ is a tree-representation of $H$ of width 2. Hence, by removing $v_5$, we obtain a good tree-representation of $G$ with $a$, $b$ having width $2$, a contradiction.
\end{proof}

\begin{lemma} \label{lem:bad+p3}
Let $G$ be a graph and let $c$ be a vertex of $G$ with precisely two neighbors $a$ and $b$. If $ab \in E(G)$ and $\mmw(G) \geq 3$, then $\mmw(G \setminus ab) \geq 3$.
\end{lemma}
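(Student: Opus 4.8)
The plan is to prove the contrapositive: assuming $\mmw(G\setminus ab)\le 2$, I will produce a tree-representation of $G$ of width at most $2$, hence $\mmw(G)\le 2$. So start with a tree-representation $\mathcal{T}=(T,\{T_v\}_{v\in V(G)})$ of $G\setminus ab$ of width at most $2$. The only thing missing relative to $G$ is that $T_a$ and $T_b$ need to share a vertex of $T$. Note that $c$ is still a vertex here, and in $G\setminus ab$ its neighbours are exactly $a$ and $b$, so $T_c$ meets both $T_a$ and $T_b$.

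The key idea is to use the subtree $T_c$ as a ``bridge'' and absorb it into $T_a$ (or $T_b$) to force an intersection. First I would pick a vertex $p\in V(T_a)\cap V(T_c)$ and a vertex $q\in V(T_b)\cap V(T_c)$ (these exist by property~(1) of Theorem~\ref{thm:newchar}); if $p=q$ then $T_a$ and $T_b$ already share that vertex and $\mathcal{T}$ is already a tree-representation of $G$, so assume $p\ne q$ and let $P$ be the path in $T_c$ from $p$ to $q$. Now I would like to redefine $T_a' := T_a\cup P$ and keep everything else, deleting $c$ — but the obstacle is that adding $P$ to $T_a$ may push the number of subtrees through some edge of $P$ up to $3$, since edges of $P$ can already be covered by $T_b$, $T_c$, and one other subtree. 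This is exactly the situation handled in Lemma~\ref{lem:bad+p4}, so the same fix applies: we have removed $T_c$ (freeing one slot on each edge of $P$), so after deletion at most two subtrees among $\{T_x : x\ne c\}$ can contain a given edge of $P$ besides the ones we are adding; a local surgery on $T$ — subdividing an edge of $P$ where congestion would occur and hanging a short pendant path to route $T_a'$ and $T_b$ together — keeps the width at $2$ while making $T_a'$ and $T_b$ intersect. Concretely, I expect the cleanest route is: delete $c$, then along $P$ re-assign each edge so that it is covered by $T_a'$ only up to the ``midpoint'' and by $T_b$ from there on, with $T_a'$ and $T_b$ overlapping in a single shared vertex (or edge) near the midpoint, inserting a subdivision-plus-pendant gadget there if some other subtree already saturates that edge, exactly as in Figure~\ref{fig:2conntrrep}.

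The main obstacle, then, is the congestion bookkeeping along $P$: I must check that after removing $T_c$ no edge of $P$ is covered by more than one subtree other than $\{T_a, T_b\}$, and argue that the rerouting never creates an edge with three subtrees. Since $T_c$ covered every edge of $P$ and the original width was $2$, every edge of $P$ was covered by at most one subtree from $\{T_x : x\notin\{a,b,c\}\}$ plus possibly $T_a$ near $p$ and $T_b$ near $q$; handing each edge of $P$ to exactly one of $T_a'$, $T_b$ then leaves room. I would also note this lemma is essentially the ``$P_3$ contraction'' analogue of Lemma~\ref{lem:bad+p4} and it is consistent with Lemma~\ref{lem:bad+p4}: if $(G\setminus ab, \{a,b\})$ were bad we would already have $\mmw(G)\ge 3$ by that lemma (applied after noting $G$ contains the $P_4$ $a$-$c$-$b$ only after subdividing, which it does not quite — so the direct surgery argument is the right tool here rather than a black-box appeal).
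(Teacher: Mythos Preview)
Your overall strategy is the paper's: argue the contrapositive, take a width-$2$ tree-representation $(T,\{T_v\})$ of $G\setminus ab$, use that $T_c$ bridges $T_a$ and $T_b$ along a path $P$ in $T$, and then perform the surgery of Lemma~\ref{lem:bad+p4}/Figure~\ref{fig:2conntrrep}. Indeed the paper's proof is literally ``do the same as in the proof of Lemma~\ref{lem:bad+p4}, except that here we set $d=c$.''

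There is, however, a real gap in your write-up. You twice say ``delete $c$'', but $c$ is a vertex of $G$, and your stated goal is a tree-representation of $G$, not of $G\setminus c$. You cannot simply discard $T_c$. (Note the contrast with Lemma~\ref{lem:bad+p4}: there $c,d$ are \emph{not} in $G$, which is why that proof may remove $v_5$ and with it $T'_c,T'_d$ at the end. Here $c\in V(G)$, so that final removal is illegal.) Relatedly, your sentence ``inserting a subdivision-plus-pendant gadget there \emph{if} some other subtree already saturates that edge'' misidentifies the role of the pendant: it is not merely a congestion fix but the place where the new $T'_c$ lives. In the paper's construction one always adds the pendant path $v_3v_4v_5$, sets $T'_c$ to be the single edge $v_4v_5$, extends $T'_a$ through $v_1v_3,v_3v_4$ and $T'_b$ through $v_3v_4$ and the rest of $P$; then $T'_c$ meets both $T'_a$ and $T'_b$ at $v_4$, and every edge of $T'$ is covered by at most two subtrees. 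If you instead only made $T'_a$ and $T'_b$ share a single vertex $v$ and then tried to reinstate $T_c$, you would need a nontrivial subtree through $v$, and every existing edge at $v$ may already carry two subtrees---so you are forced back to adding a pendant anyway. Once you fix this (keep $c$, park $T'_c$ on the pendant), your argument coincides with the paper's.
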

\begin{proof}
We prove by contradiction. Suppose $H = G \setminus ab$ has a tree-representation $\mathcal{T} = (T, \{T_v\}_{v \in V(H)})$ of width at most $2$. Since $\mmw(G) \geq 3$ the subtrees $T_a$ and $T_b$ are vertex-disjoint. Let $v_1 \in V(T_a)$ and $v_2 \in V(T_b)$ be the vertices of $T$ such that the unique path $P$ in $T$ from $v_1$ to $v_2$ have no edge in neither $T_a$ nor $T_b$. As $c$ is a common neighbor of $a$ and $b$, every edge of $P$ is in $T_c$. Now we do the same as in the proof of Lemma \ref{lem:bad+p4} and Figure \ref{fig:2conntrrep}, except that here we set $d=c$. The resulting tree-representation of $G$ has width at most $2$, a contradiction.
\end{proof}

A \emph{$2$-cut} in $G$ is an inclusion-wise minimal subset $S\subset V(G)$ such that $\abs{S}=2$ and $G\setminus S$ is disconnected. 
Given a graph $G$ and its 2-cut $\{a,b\}$ with a component $S$ of $G \setminus \{a,b\}$, we denote by $\tilde S$ the induced subgraph $G[V(S) \cup \{a,b\}]$. As $\{a,b\}$ is the unique 2-cut having $S$ as a component, we may say simply $\tilde S$ is \emph{good} or \emph{bad}.

\begin{lemma}\label{lem:2cut}
Let $G$ be a graph in $\mathcal{O}_2$.
If a $2$-cut $\{a,b\}$ separates $G$ into two components $A$ and $B$, 
then $ab \notin E(G)$ and one of $\tilde A$ or $\tilde B$ is isomorphic to either $P_3$ or $P_4$.
\end{lemma}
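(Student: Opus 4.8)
The plan is to exploit the minor-minimality of $G$ together with the two ``bad'' structural lemmas (Lemmas~\ref{lem:bad+p4} and~\ref{lem:bad+p3}) to squeeze each side $\tilde A, \tilde B$ down to a tiny graph. First I would argue that neither $\tilde A$ nor $\tilde B$ is good. Indeed, suppose $\tilde B$ is good, witnessed by a good tree-representation $(T^B,\{T^B_v\})$ of $\tilde B$ in which $T^B_a$ and $T^B_b$ share an edge. Take any tree-representation $(T^A,\{T^A_v\})$ of $\tilde A$ of width at most $2$ realizing $\mmw(\tilde A)\le 2$ (this holds because $\tilde A$ is a proper minor of $G$, being obtained by contracting $B$ to a single vertex and deleting it, or more simply because $G\in\mathcal O_2$ is minor-minimal so every proper subgraph/minor has mm-width $\le 2$). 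Then glue the two trees by identifying the shared edge of $T^B_a,T^B_b$ with an edge of $T^A$ lying in both $T^A_a$ and $T^A_b$: since $\{a,b\}$ is a cut, the only edges of $G$ crossing between $A$ and $B$ are incident to $a$ or $b$, so properties (1) and (2) of Theorem~\ref{thm:newchar} are preserved and we get a tree-representation of $G$ of width $2$, contradicting $\mmw(G)\ge 3$. Hence both $\tilde A$ and $\tilde B$ are bad. (A small technical point to handle: ensuring $T^A_a\cap T^A_b$ actually contains an edge, or otherwise first modifying $T^A$ — but any tree-representation can be locally adjusted to force $a$ and $b$ to overlap in an edge if $a,b$ lie in a common block, which they do since $G$ is $2$-connected.)

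Next I would derive $ab\notin E(G)$. If $ab\in E(G)$, consider the graph $G/B$ obtained by contracting the component $B$ to a single vertex $c$; then $c$ has precisely the two neighbors $a,b$ and $ab\in E(G/B)$, while $G/B$ is a minor of $G$ so $\mmw(G/B)\le 2$ — but $G/B$ still contains the bad pair structure from $\tilde A$ plus the edge $ab$, and applying Lemma~\ref{lem:bad+p3} (with the role of $c$ played by the contracted vertex) gives $\mmw(G/B\setminus ab)\ge 3$, hence $\mmw(G/B)\ge 3$, contradiction. So $ab\notin E(G)$.

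Now for the size bound. Suppose $\tilde A$ contains a path from $a$ to $b$ through $A$ of length at least $3$ (i.e., with at least two internal vertices), or more generally that $\tilde A$ has at least two internal vertices; I want to contradict minimality by contracting or deleting inside $B$. Since $\{a,b\}$ is a cut and $G$ is $2$-connected, $\tilde B$ contains an $a$–$b$ path through $B$; replacing $B$ by this path (i.e., contracting $\tilde B$ onto a single $a$–$b$ path of the shortest possible length) yields a minor $G'$ of $G$. If $G'$ is a proper minor then $\mmw(G')\le 2$. On the other hand, because $\tilde A$ is bad and the surviving path through $B$ realizes a path $a\,c_1\cdots c_\ell\,b$, Lemma~\ref{lem:bad+p4} (when $\ell\ge 2$) or Lemma~\ref{lem:bad+p3} (when $\ell=1$, after noting $ab\notin E$) forces $\mmw(G')\ge 3$; this contradiction shows the $B$-side path cannot be shortened, i.e., $\tilde B$ \emph{is} already that minimal path — so $\tilde B\cong P_n$ for some $n$, and the bad-ness together with Lemma~\ref{lem:bad+p4} applied in the reverse direction pins $n$ down. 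Concretely: $\tilde B$ must be an induced path $a$–$b$; its internal length is at least $2$ (since $ab\notin E$, it is at least $2$; if it were longer we could contract one of its internal edges to get a proper minor which, by Lemma~\ref{lem:bad+p4} applied with the still-bad $\tilde A$, would still have mm-width $\ge 3$), hence $\tilde B\cong P_3$ or $P_4$. By symmetry the same dichotomy holds for $\tilde A$; and they cannot both be forced large, so at least one — say $\tilde B$ — is $P_3$ or $P_4$, which is the claim.

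The main obstacle I anticipate is the bookkeeping in the gluing argument for ``not good'': making rigorous that two width-$2$ tree-representations sharing an edge across a $2$-cut combine into a genuine width-$2$ tree-representation of $G$ (keeping the tree subcubic and all subtrees nontrivial), and dually, that contracting or deleting edges \emph{inside} one side does not accidentally destroy the bad-ness of the \emph{other} side — here one must check that ``$(\tilde A,\{a,b\})$ bad'' is preserved when we only modify $B$, which follows because such modifications do not change $\tilde A$ at all, but it must be stated carefully since minor-minimality is being invoked on the global graph. A secondary subtlety is the case where $\tilde A$ or $\tilde B$ has just one internal vertex versus the edge $ab$ being present, which is why the $ab\notin E(G)$ step must come first and Lemma~\ref{lem:bad+p3} must be available for the length-one case.
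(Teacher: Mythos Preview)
Your opening step contains a genuine error that derails the rest. You claim that \emph{both} $(\tilde A,\{a,b\})$ and $(\tilde B,\{a,b\})$ are bad, but your argument only shows that they are not both good. The ``small technical point'' you wave away---adjusting an arbitrary width-$2$ tree-representation of $\tilde A$ so that $T_a^A$ and $T_b^A$ share an edge---is precisely the definition of $(\tilde A,\{a,b\})$ being good. If that adjustment were always possible, the notion of ``bad'' would be vacuous. In fact the conclusion of the lemma is that one side is $P_3$ or $P_4$, and these are \emph{good}; so in the end exactly one side is bad, not both. The paper's proof establishes only the correct direction: if both were good, one glues the two good representations (as in your picture) to get width~$2$ for $G$, so at least one---say $\tilde B$---is bad.

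This matters downstream. Once you know $\tilde B$ is bad, minimality together with Lemma~\ref{lem:bad+p4} forces $\tilde A$ to be $P_4$ whenever $|A|\ge 2$ (otherwise contract $\tilde A$ to a length-$3$ path and get a proper minor of mm-width~$\ge 3$), and Lemma~\ref{lem:bad+p3} handles $|A|=1$ to rule out the edge $ab$. Your step~3 essentially runs this argument but with the roles of $A$ and $B$ reversed, relying on ``$\tilde A$ bad''; then you invoke ``symmetry'' to constrain $\tilde A$ as well, which would need $\tilde B$ bad---but you have just shown $\tilde B\in\{P_3,P_4\}$, which is good. So the symmetry is illusory and the argument does not close. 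A second, smaller issue: Lemma~\ref{lem:bad+p3} does not assert ``bad plus a length-$2$ path gives mm-width~$\ge 3$''; it says that if $G$ already has mm-width~$\ge 3$ and a degree-$2$ vertex $c$ with neighbours $a,b$ and $ab\in E(G)$, then deleting $ab$ keeps mm-width~$\ge 3$. Your invocations of it in the $\ell=1$ case and in the $ab\notin E(G)$ paragraph do not match its hypotheses.
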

\begin{proof}
We start with showing that one of $\tilde A$ and $\tilde B$ is bad. Suppose for contradiction that both are good. From their good representations, say $(T^A, \{T_x\}_{x \in V(A)})$ and $(T^B, \{T_y\}_{y \in V(B)})$, we can construct a tree-representation of $G$ of width $2$ as follows. We choose an edge from each of $T^A$ and $T^B$ shared by $T_a$ and $T_b$, and then subdivide those two edges and connect the new vertices by an edge; see Figure~\ref{fig:2conntr}. The new subtrees $T_a'$ and $T_b'$ will be clear from Figure~\ref{fig:2conntr}. It is easy to see that the resulting tree-representation has width 2.

\begin{figure}[h!]
\center
\includegraphics[scale=0.8]{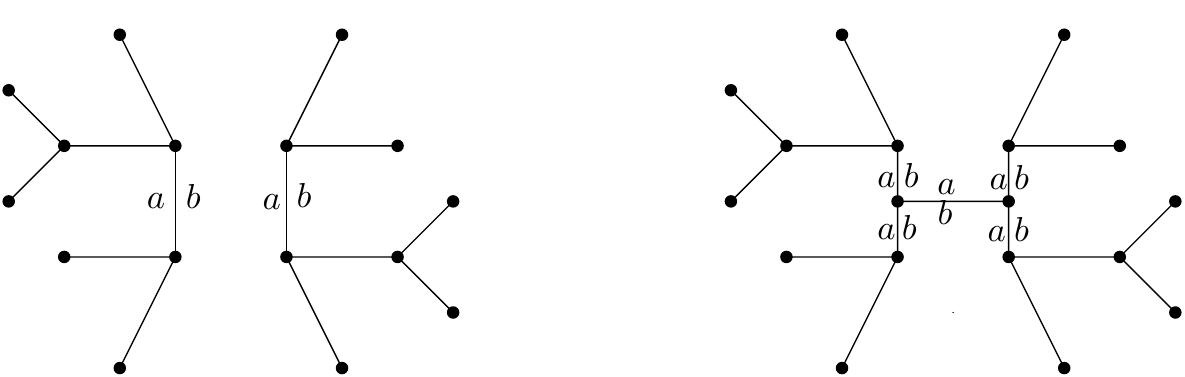}
\caption{New tree-representation from two tree-representations}\label{fig:2conntr}
\end{figure}
Now we assume $\tilde B$ is bad. Suppose $|A| \geq 2$. If $\tilde A$ is not a path between $a$ and $b$ of length 3, then by Lemma \ref{lem:bad+p4}, $G$ has a proper minor of \MMW $3$, a contradiction. Hence if $|A| \geq 2$, then $\tilde A$ is isomorphic to $P_4$. 
Suppose $A = \{c\}$. 
Since $G$ is $2$-connected,
$c$ is adjacent to both $a$ and $b$ and by Lemma \ref{lem:bad+p3}, $\tilde A$ is the path $acb$.
\end{proof}

To consider the 2-cuts with more than two components, we use the following lemma. The proof of Lemma~\ref{lem:bad+p4} can be modified to prove the following.

\begin{lemma} \label{lem:bad+c4}
Let $G$ be a graph and let $a,b \in V(G)$. Let $H$ be the graph obtained from $G$ by adding two new vertices, say $c$ and $d$, and edges $ac$, $bc$, $ad$ and $bd$, followed by removing the edge $ab$ if $ab \in E(G)$. If $(G,\{a,b\})$ is bad, then $\mmw(H) \geq 3$.
\end{lemma}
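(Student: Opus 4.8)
The plan is to mimic the proof of Lemma~\ref{lem:bad+p4} almost verbatim, tracking how the extra edge in the $C_4$-gadget (versus the $P_4$-gadget) changes the argument. As before, I would argue by contradiction: assume $\mmw(H)\le 2$ and fix a tree-representation $\mathcal{T}=(T,\{T_v\}_{v\in V(H)})$ of width at most $2$. The key difference is that now $c$ and $d$ are both adjacent to \emph{both} $a$ and $b$, so $T_c$ must meet $T_a$ and $T_b$, and likewise $T_d$. From $\mathcal{T}$ I would extract a tree-representation of $G$ by redistributing the gadget subtrees: for each choice of which of $c,d$ gets merged into $T_a$'s side and which into $T_b$'s side (and the choices where both go to one side), the subtrees for $a$ and $b$ must intersect in exactly one vertex, since $(G,\{a,b\})$ is bad. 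This forces $E(T_a)\cap E(T_b)=\emptyset$ and pins down the intersection structure: there are vertices $v_1\in V(T_a)$, $v_2\in V(T_b)$ such that the $v_1$–$v_2$ path $P$ in $T$ avoids the edges of $T_a$ and $T_b$, and every edge of $P$ lies in both $T_c$ and $T_d$ (this is where the doubled adjacency matters — with $C_4$ both gadget vertices are forced along the whole of $P$, whereas with $P_4$ the path split into a $T_c$-part and a $T_d$-part).

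Next I would carry out the same surgery as in Figure~\ref{fig:2conntrrep}: let $e=v_1u$ be the first edge of $P$, subdivide it at a new vertex $v_3$, and attach a pendant path $v_3v_4v_5$. I then redefine the subtrees: $T_a'$ extends $T_a$ by $v_1v_3$ and $v_3v_4$; $T_b'$ extends $T_b$ by $v_3v_4$ and the edges of $P$ from $v_3$ to $v_2$; and I need to reposition $T_c$ and $T_d$ into the pendant path so that each still meets both $T_a'$ and $T_b'$. Since $T_a'$ and $T_b'$ both contain $v_3v_4$, I can take $T_c'$ and $T_d'$ to be (say) the single edge $v_4v_5$, or if a width conflict arises on $v_4v_5$ I can instead give them the edge $v_3v_4$ — one has to check that at most two subtrees pass through each edge of $T'$. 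The edge $e$ itself (now subdivided) carries at most the same load as before minus $\{c,d\}$ plus the redistributed subtrees, and on the pendant edges only $T_a',T_b',T_c',T_d'$ can appear, so a small case check confirms width $2$. Finally, deleting $v_5$ yields a good tree-representation of $G$ with respect to $\{a,b\}$, contradicting badness.

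The main obstacle I expect is the bookkeeping on the two pendant edges $v_3v_4$ and $v_4v_5$: with the $P_4$-gadget only one of $T_c,T_d$ needed to sit near $v_3$ at a time, but with $C_4$ both $T_c'$ and $T_d'$ must simultaneously meet $T_a'$ and $T_b'$, so they both want to use $v_3v_4$, which already carries $T_a'$ and $T_b'$ — that would be four subtrees on one edge. The fix is that $T_c'$ and $T_d'$ only need to \emph{touch} the common vertex $v_3$ (or $v_4$), not share an edge with $T_a'$ and $T_b'$; since subtrees are required only to be nontrivial and to \emph{intersect} (share a vertex) for adjacent pairs, I can let $T_c'$ and $T_d'$ each be the edge $v_4v_5$, which meets $T_a'$ and $T_b'$ at the vertex $v_4$. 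Then $v_4v_5$ carries exactly $\{T_c',T_d'\}$, $v_3v_4$ carries exactly $\{T_a',T_b'\}$, and the subdivided halves of $e$ carry at most what $e$ did, so width $2$ is preserved. Making this vertex-versus-edge distinction explicit is the one place the $C_4$ argument genuinely departs from the $P_4$ one, and it is exactly the point the phrase ``can be modified'' in the lemma statement is gesturing at.
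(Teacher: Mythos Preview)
Your proposal is correct and follows exactly the route the paper intends. The paper gives no proof beyond the single sentence ``The proof of Lemma~\ref{lem:bad+p4} can be modified to prove the following,'' so you have in fact supplied more detail than the paper does. You correctly pinpoint the one genuine difference from the $P_4$ argument: since $c$ and $d$ are each adjacent to \emph{both} $a$ and $b$, the redistributions force $T_c\cap T_a$, $T_c\cap T_b$, $T_d\cap T_a$, $T_d\cap T_b$ all to be single vertices, and hence both $T_c$ and $T_d$ contain the entire bridge path $P$ (rather than splitting it, as in the $P_4$ case). Your resolution of the pendant-edge bookkeeping --- placing $T_c'$ and $T_d'$ on $v_4v_5$ so that they meet $T_a'$ and $T_b'$ only at the vertex $v_4$ --- is precisely the right fix and keeps every edge at load at most $2$.
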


Suppose that a $2$-cut $\{a,b\}$ separates $G$ into at least three components, namely $D_1, D_2, \ldots, D_k$. Since we can combine the arbitrary number of good tree-representations as in Figure~\ref{fig:2conntr} while preserving goodness, one of the $\tilde D_i$'s, say $\tilde D_1$, is bad. Because of the previous paragraph, we have $k \leq 3$ and one of the following holds:
\begin{enumerate}
\setlength{\itemsep}{0pt}
\setlength{\parsep}{0pt}
\setlength{\parskip}{0pt}
\item $k=2$ and $\tilde D_2 = P_3$.
\item $k=2$ and $\tilde D_2 = P_4$.
\item $k=3$ and $\tilde D_2 = \tilde D_3 = P_3$.
\end{enumerate}

We summarize the above discussion as follows:

Let $G$ be a graph in $\mathcal{O}_2$. Each $2$-cut $\{a,b\}$ of $G$ has a unique component $B_{ab}$ of $G \setminus \{a,b\}$ such that $\tilde B_{ab}$ is bad. We call $G \setminus B_{ab}$ the \emph{good-side} of $\{a,b\}$. 
The good-side of a $2$-cut $\{a,b\}$ is either
\begin{itemize}
\item a path of length $2$ between $a$ and $b$,
\item a path of length $3$ between $a$ and $b$, or
\item a $K_{2,2}$ where $a$ and $b$ are non-adjacent
\end{itemize}

We shall show below that every graph in $\mathcal{O}_2$ can be constructed from a small 3-connected graph by replacing some of its edges by some of the three graphs in Figure~\ref{fig:2connsubd}. To state precise, we call the replacement of an edge $ab$ with $P_3 = acb$, $P_4 = acdb$ and $K_{2,2} = acb \cup adb$, respectively, as \emph{$1$-subdivision, $2$-subdivision and $11$-subdivision} where $c,d$ are adjacent to no other vertices; see Figure~\ref{fig:2connsubd}. We call these three operations as \emph{good-subdivisions}.

\begin{figure}[h!]
\center
\includegraphics[scale=1]{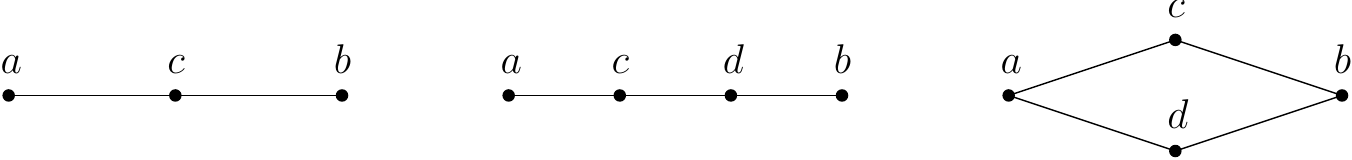}
\caption{Three ways of replacing an edge $ab$}\label{fig:2connsubd}
\end{figure}

\begin{lemma} \label{lem:3connbase}
Every graph in $\mathcal{O}_2$ is obtained from a $3$-connected graph on $4$,~$5$, or~$6$ vertices by good-subdividing some of its edges.
\end{lemma}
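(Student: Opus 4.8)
The plan is to start from an arbitrary $G \in \mathcal{O}_2$ and, using the structure of its $2$-cuts established in Lemmas~\ref{lem:2cut} through~\ref{lem:bad+c4} and the summary that follows, repeatedly ``undo'' the good-subdivisions until no $2$-cut remains, showing that what we are left with is a $3$-connected graph on at most $6$ vertices. First I would formalise the inverse operations: given a $2$-cut $\{a,b\}$ of $G$ whose good-side is $P_3$, $P_4$, or $K_{2,2}$, let $G'$ be the graph obtained by deleting the good-side's internal vertices and adding the edge $ab$ (if not already present). I claim $G'$ is a minor of $G$ (contract the good-side down to a single edge) and that $\mmw(G') \ge 3$; the latter is exactly the contrapositive of Lemma~\ref{lem:bad+p4}, Lemma~\ref{lem:bad+p3}, or Lemma~\ref{lem:bad+c4} applied to the bad side $\tilde B_{ab}$ (with $G$ in those lemmas taken to be $\tilde B_{ab}$ and $H = G$). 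Since $G \in \mathcal{O}_2$ is minor-minimal with respect to $\mmw \ge 3$, this forces $G' = G$, i.e. $G$ had no $2$-cut in the first place — contradiction — \emph{unless} the inverse operation is not well-defined because some hypothesis fails. So the real content is: every $G \in \mathcal{O}_2$ that is genuinely not $3$-connected must admit such an inverse step to a smaller member-or-proper-minor, and iterating terminates.

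The cleaner way to organise this, which I would actually write, is top-down by a minimal counterexample / induction on $|V(G)|$. Suppose $G \in \mathcal{O}_2$ is not obtained from a $3$-connected graph on $\le 6$ vertices by good-subdivisions; take $G$ with fewest vertices. Since $G \in \mathcal{O}_2$, $G$ is $2$-connected but not $3$-connected, so it has a $2$-cut $\{a,b\}$; by the summary preceding the lemma, the good-side is $P_3$, $P_4$, or $K_{2,2}$. Let $G'$ be obtained by replacing the good-side with a single edge $ab$. As noted, $G'$ is a proper minor of $G$, so $\mmw(G') \le 2$ would contradict nothing directly — wait, the point is the other direction: I must show $\mmw(G') \ge 3$ so that $G'$ contains some member of $\mathcal{O}_2 \cup \mathcal{O}_3$ as a minor. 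For that I invoke the appropriate ``bad $\Rightarrow$ $\mmw(H)\ge 3$'' lemma with the bad side $\tilde B_{ab}$ in the role of $G$ and $G'$ in the role of $H$: since $\tilde B_{ab}$ is bad, $\mmw(G') \ge 3$. Now $G'$ has fewer vertices than $G$. If $G'$ is itself $3$-connected, then since $\mmw(G')\ge 3$ and every $3$-connected graph on $\le 6$ vertices has $\mmw \le 2$ (Lemma~\ref{lem:3conn mmw} shows $\ge 7$ vertices suffice; the small cases $K_4,\dots,K_6$ and their subgraphs are handled by the explicit tree-representation of $K_6$), $G'$ has $\ge 7$ vertices and contains a graph in $\mathcal{O}_3$ as a minor — but then $G$ contains that graph as a minor, contradicting the minor-minimality of $G \in \mathcal{O}_2$ (a member of $\mathcal{O}_3$ is $3$-connected, hence not a proper-minor-equivalent of $G$; in fact $G$ would have a member of $\mathcal{O}$ as a proper minor since $|V(G)| > |V(G')| \ge 7$, contradiction). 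If $G'$ is not $3$-connected, it contains some $G'' \in \mathcal{O}_2$ as a minor (take a minor-minimal subgraph for $\mmw\ge 3$; if it turns out $3$-connected we are back in the previous case); by minimality of the counterexample $G''$ — or rather, I need $G'$ to be \emph{obtained} from a small $3$-connected graph by good-subdivisions, which requires $G' \in \mathcal{O}_2$ or a mild strengthening of the induction hypothesis to ``every $2$-connected graph with $\mmw \ge 3$ all of whose proper subgraphs have $\mmw\le 2$ among some class.'' I will phrase the induction so that this closes.

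The main obstacle I anticipate is exactly this bookkeeping: the inverse good-subdivision produces a smaller graph of $\mmw \ge 3$ but not obviously a smaller element of $\mathcal{O}_2$ (it need not be minor-minimal, e.g. it could have acquired a $K_4$-type $3$-connected obstruction ``hidden'' inside). I would handle it by strengthening the statement being inducted on: prove that \emph{any} $2$-connected graph $G$ with $\mmw(G) \ge 3$ in which every proper topological-minor obtained by an inverse good-subdivision still has $\mmw\ge 3$ — equivalently, just carefully track that passing from $G$ to $G'$ preserves ``$2$-connected, $\mmw\ge 3$, and strictly fewer vertices,'' then at the bottom of the recursion we reach a graph that is either $3$-connected (done, with the $K_6$ tree-representation bounding the small cases) or has a $2$-cut whose good-side is a single edge — impossible since a single edge is not among $P_3,P_4,K_{2,2}$, meaning in fact there is no $2$-cut left, so the graph is $3$-connected. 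One further technical point to check: when the good-side is $K_{2,2}$ (the $11$-subdivision), replacing it by the edge $ab$ must be shown to be a genuine minor operation and Lemma~\ref{lem:bad+c4} must be invoked in the stated form; and when several $2$-cuts share a vertex, reducing one does not destroy the others' good-side structure, so the order of reductions is immaterial. Assembling these, the recursion terminates at a $3$-connected graph $G_0$ with $\mmw(G_0) \ge 3$; by minor-minimality of $G$ within $\mathcal{O}_2$ we must have $|V(G_0)| \le 6$ (otherwise $G$ would contain a member of $\mathcal{O}_3$ as a proper minor), which is the desired conclusion.
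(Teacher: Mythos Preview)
Your iterative reduction has a fatal flaw at the very first step. You form $G'$ from $G\in\mathcal{O}_2$ by replacing one good-side at a $2$-cut $\{a,b\}$ with the single edge $ab$, and then you want $\mmw(G')\ge 3$. But $G'$ is a \emph{proper} minor of $G$, and $G$ is by definition minor-minimal for $\mmw\ge 3$; hence $\mmw(G')\le 2$ automatically. Your invocation of Lemmas~\ref{lem:bad+p4},~\ref{lem:bad+p3},~\ref{lem:bad+c4} does not rescue this: those lemmas say that if $(\tilde B_{ab},\{a,b\})$ is bad then $\tilde B_{ab}$ \emph{together with a good-subdivision} has $\mmw\ge 3$; they say nothing about $\tilde B_{ab}+ab$. ``Bad'' only forbids width-$2$ tree-representations in which $T_a$ and $T_b$ share an \emph{edge}; a representation where they meet in a single vertex is still allowed, and that is exactly what $\tilde B_{ab}+ab$ can have. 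Concretely, take the graph in $\mathcal{O}_4^1$ ($K_4$ with five $1$-subdivisions): undoing any one subdivision yields $K_4$ with four $1$-subdivisions, which the paper explicitly notes has $\mmw\le 2$. So your induction invariant ``$\mmw\ge 3$'' is false after a single reduction, and no strengthening of the hypothesis of the kind you sketch can repair it.

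The paper avoids this entirely by replacing \emph{all} inclusion-wise maximal good-sides simultaneously to obtain a graph $H$, never claiming $\mmw(H)\ge 3$. Instead it argues structurally: first it shows distinct maximal good-sides are either nested in a $P_4$ good-side or meet only in a cut vertex, so the simultaneous replacement is well-defined; then any $2$-cut of $H$ would lift back to an unaccounted-for $2$-cut of $G$, so $H$ is $3$-connected; finally minor-minimality of $G$ is used only once, to rule out $|V(H)|\ge 7$ (else $H$, and hence $G$, would contain a member of $\mathcal{O}_3$), while $|V(H)|\le 3$ is ruled out by exhibiting a width-$2$ tree-representation of any graph built from a triangle by good-subdivisions. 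You touch none of these three ingredients.
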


\begin{proof}
Let us consider the inclusion-wise maximal good-sides of $2$-cuts.
We would like to replace each of them with an edge between the vertices in its $2$-cut. 
To make this operation valid, 
we begin with showing that if two good-sides intersect, 
then both of them are contained in a good-side that is $P_4$, or 
the intersection is a single vertex contained in both of their $2$-cuts. 
Note that if $\tilde A$ is bad then $A$ has at least $5$ vertices, as $K_6$ has a good tree-representation for every pair of its vertices.

Let $G$ be a graph in $\mathcal{O}_2$.
Suppose that $G$ has two $2$-cuts $\{a,b\}$ and $\{c,d\}$ such that $c$ is in $G \setminus \tilde B_{ab}$. If $d$ is in $B_{ab}$, then $d$ must be a cut-vertex of $\tilde B_{ab}$ separating $a$ from $b$. The subgraph $\tilde B_{ab}$ has precisely two blocks, namely $D_a$ and $D_b$, and we assume that $a \in D_a$ and $b \in D_b$. 
By Lemmas 3.4, 3.5 and 3.7, $ab \notin E(G)$ and $cd \notin E(G)$. That is, both $\{a,d\}$ and $\{b,d\}$ are 2-cuts of $G$. Since $D_a \cup D_b$ is bad, $G \setminus (D_a \cup D_b)$ has at most two vertices. Considering the bad-sides of $\{a,d\}$ and $\{b,d\}$, we deduce that precisely one of $D_a$ and $D_b$, let us say $D_a$, is bad. Then the good-side of $\{a,d\}$ already has $\{a,b,c,d\}$ so that it must be the path $acdb$, which contains the good-sides of $\{a,b\}$ and $\{c,d\}$.

Therefore, if we consider only the inclusion-wise maximal good-sides, then their pairwise intersections have size at most $1$ and we can safely replace all of them at once by edges. Let $H$ be the resulting proper minor of $G$. If $H$ has a 2-cut, then we construct $G$ back from $H$ and the 2-cut still remains in $G$, which is impossible since 
for each $2$-cut $S$, we remove all but one component of $G\setminus S$ while producing $H$.
If $H$ has at least 7 vertices, then $H$ has a minor in $\mathcal{O}_3$ so that $G \notin \mathcal{O}_2$. Thus $H$ has at most 6 vertices. Obviously $H$ cannot be $K_2$. If $H$ is a triangle $abc$, then $G$ is obtained from $abc$ by good-subdividing all three edges $ab, bc$ and $ca$. To find a tree-representation of $G$ with width 2, we start from a $K_{1,3}$ where its three edges are labelled respectively by $ab, bc$ and $ca$. Then we can add the good-sides for the edges $ab, bc$ and $ca$ without increasing the width. Hence $H$ has $4$, $5$, or $6$ vertices and is $3$-connected.
\end{proof}

The obstructions obtained from a $3$-connected graph on $4$, $5$, and $6$ vertices respectively are listed in Figures~\ref{fig:o4},~\ref{fig:o5}, and~\ref{fig:o6}. The respective proofs are given in Lemmas~\ref{lem:o4},~\ref{lem:o5} and~\ref{lem:o6}. Note that Lemmas~\ref{lem:bad+p4} and~\ref{lem:bad+c4} imply the following.

\begin{lemma} \label{lem:p4c4equiv}
Let $G$ be a graph with an induced path $acdb$ such that $c$ and $d$ are non-adjacent to other vertices. Let $H$ be the graph obtained from $G-\{c,d\}$ by adding two new vertices $c'$,$d'$ and paths $ac'b$ and $ad'b$. Then $G \in \mathcal{O}_2$ if and only if $H \in \mathcal{O}_2$.
\end{lemma}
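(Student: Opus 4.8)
The plan is to view both $G$ and $H$ as good-subdivisions of the non-edge $ab$ in the common graph $G_0:=G\setminus\{c,d\}=H\setminus\{c',d'\}$: namely $G=G_0+P_4$ (the $2$-subdivision, adding the path $acdb$) and $H=G_0+K_{2,2}$ (the $11$-subdivision, adding $c',d'$ each joined to $a$ and $b$). Since $acdb$ is induced, $ab\notin E(G_0)$, and hence $ab\notin E(H)$ as well; also $a,b\in V(G_0)$.

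The engine of the argument is the equivalence, valid for any graph $F$ with $a,b\in V(F)$, $ab\notin E(F)$: the pair $(F,\{a,b\})$ is good if and only if $\mmw(F+P_4)\le 2$, if and only if $\mmw(F+K_{2,2})\le 2$, where $F+P_4$, $F+K_{2,2}$ again denote the $2$- and $11$-subdivisions (no edge $ab$ being added). One direction is the splicing construction already used in Lemma~\ref{lem:2cut} and Figure~\ref{fig:2conntr}: $P_4$ and $K_{2,2}$ are themselves good for the pair of their non-gadget vertices, so a good tree-representation of $F$ and one of the gadget can be joined along the shared edges of the $a$- and $b$-subtrees, giving a width-$2$ tree-representation of $F+P_4$ (resp.\ $F+K_{2,2}$). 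The other direction is exactly Lemmas~\ref{lem:bad+p4} and~\ref{lem:bad+c4} (whose ``remove the edge $ab$'' clauses are vacuous here). Applying this with $F=G_0$ gives $\mmw(G)\ge 3\iff(G_0,\{a,b\})\text{ is bad}\iff\mmw(H)\ge 3$. Moreover $\{a,b\}$ is a $2$-cut of both $G$ and $H$ (if $V(G_0)=\{a,b\}$ then $G$ is a path and $H$ a $4$-cycle, both of mm-width $<3$, so neither lies in $\mathcal{O}_2$ and the statement is trivial; otherwise $\{c,d\}$, resp.\ $\{c'\}$ and $\{d'\}$, are components of $G\setminus\{a,b\}$, resp.\ $H\setminus\{a,b\}$), so neither graph is $3$-connected. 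Hence membership in $\mathcal{O}_2$ is equivalent, for each of $G$ and $H$, to having mm-width $\ge 3$ and having all proper minors of mm-width $\le 2$; the first condition has just been matched.

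For the minor-minimality condition, by the symmetry between the two gadgets it suffices to assume $G\in\mathcal{O}_2$ and show every proper minor of $H$ has mm-width $\le 2$; since a proper minor of $H$ is a minor of $H\setminus v$, $H\setminus e$, or $H/e$, and since $ab\notin E(H)$ forbids identifying $a$ with $b$ in one step, it is enough to bound the mm-width of those graphs, split according to whether the removed or contracted object meets the gadget $\{c',d',ac',c'b,ad',d'b\}$ or lies in $G_0$. Deleting $c'$ or $d'$, or deleting or contracting a gadget edge, collapses the gadget (after discarding the resulting pendant vertex, which does not change mm-width) to a path of length two between $a$ and $b$, possibly together with the edge $ab$; the first graph is $G/cd$, a proper minor of $G$, hence of mm-width $\le 2$, and the second is handled by Lemma~\ref{lem:bad+p3} applied to the degree-two vertex of that path. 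Deleting $a$ or $b$ leaves the gadget vertices pendant, and after removing them we obtain $G_0\setminus a$ or $G_0\setminus b$, whose mm-width is $\le 2$ because $G\setminus a$, resp.\ $G\setminus b$, (which is $G_0\setminus a$, resp.\ $G_0\setminus b$, with a pendant path adjoined) is a proper minor of $G$. Finally, deleting a vertex of $G_0$ other than $a,b$, or deleting or contracting an edge of $G_0$, produces $F+K_{2,2}$ for some proper minor $F$ of $G_0$ with $ab\notin E(F)$ (after removing an edge $ab$ that a contraction may create); the same operation turns $G$ into the proper minor $F+P_4$ (possibly plus an edge $ab$), so $\mmw(F+P_4)\le 2$, whence $(F,\{a,b\})$ is good by the engine equivalence, whence $\mmw(F+K_{2,2})\le 2$, and a last application of Lemma~\ref{lem:bad+p3} restores any dropped edge $ab$ without pushing the mm-width above $2$. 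The reverse implication is symmetric, with the splicing step now carried out across the $4$-cycle through $a$ and $b$.

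The main obstacle is the case bookkeeping of the previous paragraph, and within it the fact that a single edge contraction may simultaneously shrink $G_0$ and create the edge $ab$: because the $11$-subdivision, unlike the $2$-subdivision, introduces vertices whose only neighbours are $a$ and $b$, it is precisely this spurious edge that must be peeled off and reinstated with Lemma~\ref{lem:bad+p3} in order to shuttle width bounds between the $P_4$-world and the $K_{2,2}$-world. Everything else—splicing good tree-representations, absorbing pendant vertices, and checking that the small gadgets are good—is routine and follows constructions already in the paper.
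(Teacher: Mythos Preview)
Your argument is correct and is exactly the elaboration the paper intends: the paper's entire proof is the one-line remark that Lemmas~\ref{lem:bad+p4} and~\ref{lem:bad+c4} imply the result, and you have supplied the engine equivalence and the minor-minimality casework that this remark leaves implicit. One small slip in the claimed symmetry: in the reverse direction ($H\in\mathcal{O}_2\Rightarrow G\in\mathcal{O}_2$), the appeal to Lemma~\ref{lem:bad+p3} to reinstate a contracted-in edge $ab$ on top of $F+P_4$ does not apply verbatim, since no vertex of $F+P_4$ has neighbour set exactly $\{a,b\}$; but this is harmless, because once $(F,\{a,b\})$ is good the same tree-representation is already good for $F+ab$ (the shared edge of $T_a$ and $T_b$ covers $ab$), and your splicing construction then yields $\mmw(F+ab+P_4)\le 2$ directly.
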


Hence in the following discussion we do not consider $11$-subdivisions. The obstructions obtained by replacing $2$-subdivisions with $11$-subdivisions shall be added to the list without mentioning.

We shall use the following lemma often when we show a graph has \MMW at most $2$.
\begin{lemma}\label{lem:mmw2}
Let $G$ be a graph. 
If $\{V_1, V_2, V_3\}$ is a partition of $V(G)$ and $G$ has six vertices $a_i, b_i$ for $i=1,2,3$ such that for each $i$,
\begin{itemize}
\item[(1)] $a_i, b_i \in V_i$,
\item[(2)] $\{a_i, b_i\}$ separates $V_i$ from $V(G) \setminus V_i$, and
\item[(3)] $(G[V_i], \{a_i, b_i\})$ has a good tree-representation,
\end{itemize}
then $\mmw(G) \leq 2$.
\end{lemma}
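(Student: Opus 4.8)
The plan is to glue together the three good tree-representations of $G[V_1]$, $G[V_2]$, $G[V_3]$ along a common central structure, exactly in the spirit of Figure~\ref{fig:2conntr} but now with a three-fold join instead of a two-fold one. For each $i$, let $(T^i, \{T^i_x\}_{x \in V_i})$ be the good tree-representation of $G[V_i]$ witnessing goodness with respect to $\{a_i, b_i\}$; so it has width at most $2$ and the subtrees $T^i_{a_i}$ and $T^i_{b_i}$ share an edge, call it $e_i$. First I would subdivide $e_i$ in $T^i$, introducing a new vertex $w_i$, and then take a new vertex $r$ (the ``hub'') together with a star $K_{1,3}$ joining $r$ to $w_1$, $w_2$, $w_3$. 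Call the resulting tree $T$; since each $T^i$ is subcubic and $w_i$ had degree $2$ after the subdivision (so degree $3$ after attaching the edge $r w_i$), and $r$ has degree $3$, the tree $T$ is subcubic.

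Next I would define the subtrees. For $x \in V_i$ with $x \notin \{a_i, b_i\}$, keep $T_x := T^i_x$ (unchanged, living inside the copy of $T^i$). For the boundary vertices, I would enlarge the subtrees so that $T_{a_i}$ and $T_{b_i}$ both absorb the new edges from $w_i$ to $r$; concretely, let $T_{a_i}$ and $T_{b_i}$ each be obtained from $T^i_{a_i}$ and $T^i_{b_i}$ (after the subdivision of $e_i$, which splits $e_i$ into two edges both retained by $T_{a_i}$ and $T_{b_i}$) by adding the edge $w_i r$ and the vertex $r$. Thus at the hub $r$ we have exactly the six subtrees $T_{a_1}, T_{b_1}, T_{a_2}, T_{b_2}, T_{a_3}, T_{b_3}$ passing through, but each edge $r w_i$ is used only by $T_{a_i}$ and $T_{b_i}$, i.e. by exactly two subtrees. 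I would then verify the two conditions of Theorem~\ref{thm:newchar}. For the width: any edge inside a copy of $T^i$ (other than the two halves of the subdivided $e_i$) has the same load as in $T^i$, hence at most $2$; the two halves of $e_i$ are covered by $T_{a_i}$, $T_{b_i}$, and whatever $T^i_x$ already covered $e_i$ — but by condition (2), only vertices of $V_i$ can have a subtree through $e_i$ and only $a_i, b_i$ among those are on both sides, wait — more carefully, I should check that enlarging $T_{a_i}$ and $T_{b_i}$ across the subdivided edge does not raise the count beyond $2$; since originally $T^i_{a_i} \cap T^i_{b_i} \supseteq e_i$ and the width of $T^i$ is $2$, no third subtree covers $e_i$, so both halves are covered by exactly $T_{a_i}, T_{b_i}$. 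Finally the edges $r w_i$ are covered by exactly $\{T_{a_i}, T_{b_i}\}$, giving load $2$. So the $\mm$-width is at most $2$.

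For the adjacency condition (1), take an edge $xy \in E(G)$. By condition (2), $\{a_i,b_i\}$ separates $V_i$ from the rest, so if $x,y$ are in different parts $V_i, V_j$ then necessarily $\{x,y\} \subseteq \{a_i,b_i\} \cap \{a_j,b_j\}$ up to the separation structure — actually any cross edge must have both endpoints among the $a$'s and $b$'s of the two parts; in all such cases the relevant subtrees pass through the hub $r$ and therefore intersect there. If instead $x, y \in V_i$, then $xy \in E(G[V_i])$ and the original representation already gives $T^i_x \cap T^i_y \neq \emptyset$; since $T_x \supseteq T^i_x$ and $T_y \supseteq T^i_y$, they still intersect. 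Each $T_x$ is nontrivial because each $T^i_x$ was. Hence $(T, \{T_x\}_{x \in V(G)})$ is a tree-representation of $G$ of width at most $2$, and by Theorem~\ref{thm:newchar}, $\mmw(G) \leq 2$.

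The step I expect to need the most care is pinning down precisely which edges of $G$ can run between two of the parts $V_i$, and checking that for every such cross edge $xy$ the enlarged subtrees $T_x, T_y$ genuinely meet at $r$ — this hinges on reading condition (2) correctly (that $\{a_i,b_i\}$ is a separator, so a cross edge incident to $V_i$ must use $a_i$ or $b_i$) and confirming that $T_{a_i}$ and $T_{b_i}$ both contain $r$ by construction. A secondary point of care is the width bookkeeping at the subdivided edges $e_i$: one must invoke that the original width is $2$ to conclude no third subtree of $T^i$ covers $e_i$, so that after absorbing both halves into $T_{a_i}$ and $T_{b_i}$ and attaching $r$, the load stays at $2$ rather than creeping to $3$. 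Everything else is routine bookkeeping about subcubicity and nontriviality.
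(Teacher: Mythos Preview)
Your proposal is correct and follows essentially the same approach as the paper: glue the three good tree-representations at a central degree-$3$ vertex so that all six boundary subtrees $T_{a_i},T_{b_i}$ meet there, then check width and adjacency. The only cosmetic difference is that the paper first arranges each shared edge $e_i$ to have a leaf endpoint and then identifies the three leaves, whereas you subdivide $e_i$ and attach a new hub $r$; these produce isomorphic trees and identical load counts, and your self-corrections (that no third subtree of $T^i$ covers $e_i$ since the width is $2$, and that a cross edge must have its $V_i$-endpoint in $\{a_i,b_i\}$) land on the right arguments.
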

\begin{proof}
For each $i$, we consider a good tree-representation of $G[V_i]$ such that the subtrees for $a_i$ and $b_i$ share an edge whose one end has degree $1$. We combine the three tree-representations by identifying those degree 1 vertices to obtain a tree-representation of $G$ with width at most $2$.
\end{proof}

The way we use Lemma 3.10 to show a graph has mm-width $\leq 2$ is that, we try to cover the graph with either three good-sides or two good-sides and a set of at most two vertices. If we do so, the sets become $V_1, V_2$ and $V_3$ in the statement and Lemma 3.10 applies.

For convenience, we state here at once that the graphs in the following Lemmas \ref{lem:o4}, \ref{lem:o5}, \ref{lem:o6} all have \MMW at least $3$. Lemma~\ref{lem:tangle3} is a corollary of the following lemma.

\begin{lemma} \label{lem:mmw3}
Let $\mathcal{O}_4$, $\mathcal{O}_5$ and $\mathcal{O}_6$, respectively, be the set of graphs in Figures~\ref{fig:o4},~\ref{fig:o5} and~\ref{fig:o6}. Every graph in $\mathcal{O}_4 \cup \mathcal{O}_5 \cup \mathcal{O}_6$ has maximum matching width at least $3$.
\end{lemma}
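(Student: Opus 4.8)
The plan is to apply the tangle criterion of Theorem~\ref{thm:tangle}: to prove $\mmw(G)\ge 3$ it suffices to exhibit an $\mm_G$-tangle of order $3$, and I would do this for every $G$ in $\mathcal{O}_4\cup\mathcal{O}_5\cup\mathcal{O}_6$ in the spirit of Lemma~\ref{lem:tangle3}. The first ingredient is a description of the separations of $\mm$-order at most $2$. Each such $G$ lies in $\mathcal{O}_2$, so the $2$-cut analysis of Section~3.2 applies: if $\mm_G(S)\le 2$ and $3\le |S|\le |V(G)|-3$, then by K\"onig's theorem there is a set $W$ with $|W|\le 2$ meeting every edge between $S$ and $V(G)\setminus S$; since $G$ is $2$-connected, $W$ is a $2$-cut, and by Lemma~\ref{lem:2cut} and the discussion following Lemma~\ref{lem:bad+c4} one of $S$, $V(G)\setminus S$ must be contained in the good-side of $W$, which is a $P_3$, a $P_4$, or a $K_{2,2}$. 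Hence the subsets $S$ with $\mm_G(S)\le 2$ are exactly those with $|S|\le 2$, those with $|V(G)\setminus S|\le 2$, and those for which $S$ or $V(G)\setminus S$ sits inside a good-side; in particular such an $S$ or its complement has at most $4$ vertices, of which at most $2$ lie in the $3$-connected core $H$ from which $G$ is built (Lemma~\ref{lem:3connbase}), because the remaining good-side vertices have degree $2$ in $G$ while every vertex of $H$ has degree at least $3$.

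With this in hand I would build the tangle so that it ``points at the core''. Write $Z=V(H)$, so $4\le |Z|\le 6$. For each separation of $\mm_G$-order at most $2$, place into $\mathcal{T}$ the side containing fewer vertices of $Z$; when $|Z|\ge 5$ this is unambiguous by the previous paragraph, and when $|Z|=4$ one breaks the (rare) tie by a fixed rule mirroring the explicit choice of the family $B$ in Lemma~\ref{lem:tangle3}. Then (T1) holds by construction, and (T3) holds because $V(G)\setminus\{x\}$ contains all but at most one vertex of $Z$, so it is never the chosen side. The substance of the argument is (T2): that no three members $S_1,S_2,S_3\in\mathcal{T}$ satisfy $S_1\cup S_2\cup S_3=V(G)$. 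Here one uses that each $S_i$ meets $Z$ in at most two vertices and, when $|S_i|\ge 3$, lies inside a single good-side; that distinct inclusion-wise maximal good-sides meet in at most one vertex (Lemma~\ref{lem:3connbase}); and that the complement of a good-side (a bad-side) has at least $5$ vertices, so two good-sides can never cover $V(G)$ and there is too little room for a third, necessarily small, set to finish a cover.

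The step I expect to be the main obstacle is (T2) for the handful of graphs where the good-sides are arranged so that the generic count above is tight — typically those built on the smallest cores, such as $K_4$ or a small wheel. For each of these finitely many graphs I would argue directly as in Lemma~\ref{lem:tangle3}: enumerate the subsets $S$ with $\mm_G(S)\le 2$, check that every such $S$ with $|S|\ge 3$ avoids a fixed common vertex (the ``centre'' of the configuration, e.g.\ the vertex $(2,2)$ for $G=G_3$), put every set of size at most $2$ together with all these ``small'' sets into $\mathcal{T}$, and observe that since $|V(G)|$ exceeds twice the largest good-side size plus two, any three members of $\mathcal{T}$ whose union is $V(G)$ must all have size at least $3$ and hence all avoid the centre — a contradiction; in the borderline cases where this counting is not enough one instead invokes the structural facts above (bad-sides of size $\ge 5$, maximal good-sides pairwise meeting in $\le 1$ vertex) to rule out a $3$-cover. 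Exhibiting such an $\mm_G$-tangle of order $3$ gives $\mmw(G)\ge 3$ by Theorem~\ref{thm:tangle}, proving Lemma~\ref{lem:mmw3}; and the same enumeration with $G=G_3$ re-proves Lemma~\ref{lem:tangle3} as a special case.
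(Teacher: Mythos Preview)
Your plan is essentially the paper's own: exhibit an $\mm_G$-tangle of order $3$ for each $G$, built from the sets of size at most $2$ together with the ``small'' side of each order-$2$ separation (the good-side of the relevant $2$-cut), and then verify (T2) by checking that no three such sets cover $V(G)$. The paper defines the tangle by listing the three kinds of members explicitly (sets of size $\le 2$, good-sides of $2$-cuts, and the two triangles of an $11$-subdivision) and likewise leaves the (T2) verification to the reader.

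There is, however, a genuine logical gap in your write-up: you justify the structure of the order-$2$ separations by saying ``each such $G$ lies in $\mathcal{O}_2$, so the $2$-cut analysis of Section~3.2 applies,'' and then invoke Lemma~\ref{lem:2cut} and the discussion after Lemma~\ref{lem:bad+c4}. But membership in $\mathcal{O}_2$ \emph{presupposes} $\mmw(G)\ge 3$, which is exactly what Lemma~\ref{lem:mmw3} is proving, and Lemma~\ref{lem:2cut} is stated only for $G\in\mathcal{O}_2$ (its proof uses minor-minimality, which in turn uses $\mmw\ge 3$). So as written the argument is circular. The paper avoids this by not invoking those lemmas at all: it notes directly (via K\"onig) that any $X$ with $\mm(X)\le 2$ and $|X|\ge 3$ sits on one side of some $2$-cut, and then uses the \emph{explicit construction} of each $G$ (as a good-subdivision of a small $3$-connected graph, read off from the figures) to list the possible such $X$. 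Your argument is easily repaired the same way: replace the appeal to $\mathcal{O}_2$-lemmas by the observation that for these particular graphs every $2$-cut, and the shape of its components, is visible from the good-subdivision description; then your ``point at the core $Z$'' rule agrees with the paper's explicit list and the rest of the verification goes through.
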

\begin{proof}
By Theorem~\ref{thm:tangle}, it is enough to give a tangle of order $3$.
We shall explain how to find a tangle of order $3$ for each of those graphs. Recall that a tangle of order $3$ contains all the `smaller' sets $X$ with $\mm (X) \leq 2$.

Let $G$ be a graph and let $X$ be a subset of $V(G)$ such that $\mm (X) \leq 2$ and $\abs{X}\ge 3$. In other words, the bipartite graph on $V(G)$ with all the edges in $E(G)$ having one end in $X$ and the other not in $X$ has maximum matching size $2$. 
Thus we can find a set $\{a,b\}$ that is a $2$-cut of $G$ and $G \setminus \{a,b\}$ has a component, say $S$, such that $\tilde S$ contains either $X$ or $V(G) \setminus X$.

Therefore, for each graph $G \in \mathcal{O}_4 \cup \mathcal{O}_5 \cup \mathcal{O}_6$, we set $S_G$ to be the collection of all vertex subsets of the following three types:
\begin{itemize}
\setlength{\itemsep}{0pt}
\setlength{\parsep}{0pt}
\setlength{\parskip}{0pt}
\item a set of size at most $2$
\item a good-side of a $2$-cut
\item if $G$ has a 11-subdivision made of the paths $aub$ and $avb$, then $S_G$ contains both $\{a,u,b\}$ and $\{a,v,b\}$.
\end{itemize}
Now we consider the tangle axioms (T1), (T2) and (T3) in Section \ref{sec:tangle} to verify that $S_G$ is a tangle. (T1) follows immediately from the above discussion, and (T3) is also each to check for all graphs in $G \in \mathcal{O}_4 \cup \mathcal{O}_5 \cup \mathcal{O}_6$. For (T2), we can check that no three good-sides cover the whole graph and it remains to see that there are no two good-sides that covers all but at most two vertices. We leave the detail to the reader.
\end{proof}

Now we consider the case when the 3-connected graph in Lemma \ref{lem:3connbase} has four vertices. The only 3-connected graph on four vertices is $K_4$.

\begin{figure}[h]
        \centering
        \begin{subfigure}[b]{0.3\textwidth}
            \centering
            \includegraphics[height=50pt]{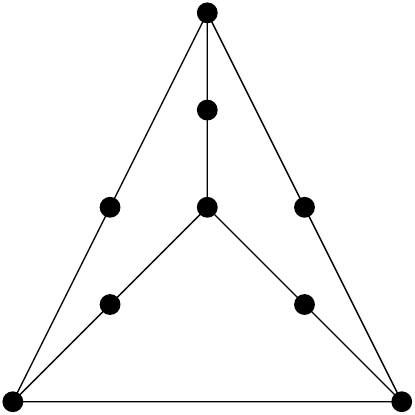}
            \caption{$\mathcal{O}_4^1$}\label{fig:o41}
        \end{subfigure}
        \begin{subfigure}[b]{0.3\textwidth}
            \centering
            \includegraphics[height=50pt]{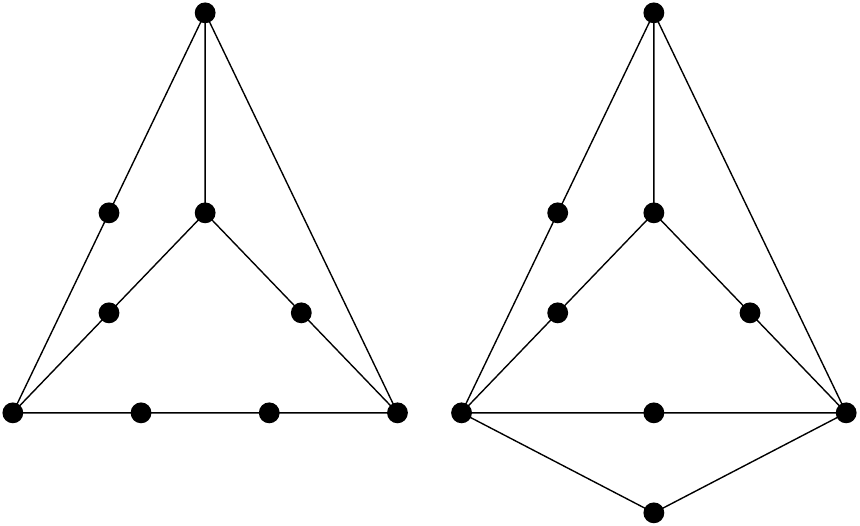}
            \caption{$\mathcal{O}_4^2$}\label{fig:o42}
        \end{subfigure}

        \begin{subfigure}[b]{\textwidth}
            \centering
            \includegraphics[height=50pt]{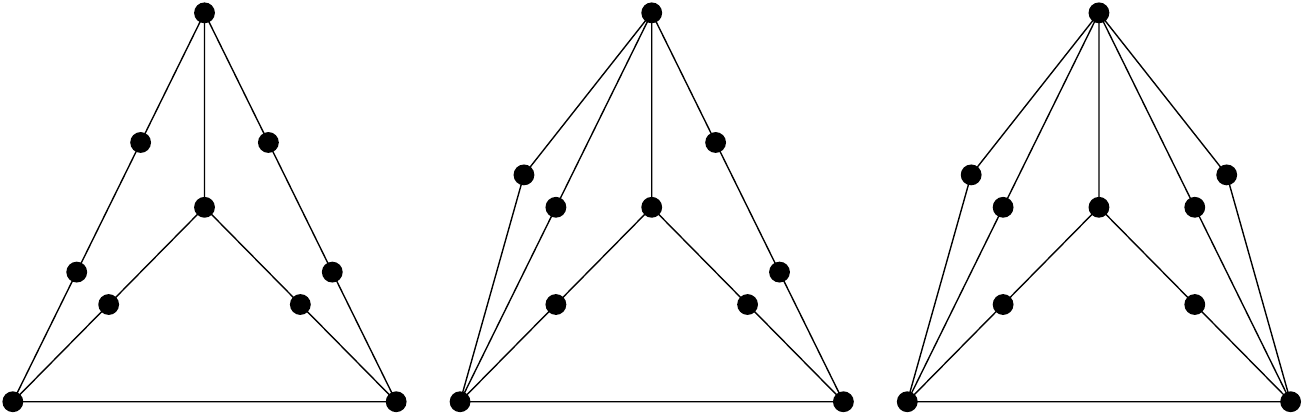}
            \caption{$\mathcal{O}_4^3$}\label{fig:o43}
        \end{subfigure}

	\begin{subfigure}[b]{\textwidth}
            \centering
            \includegraphics[height=50pt]{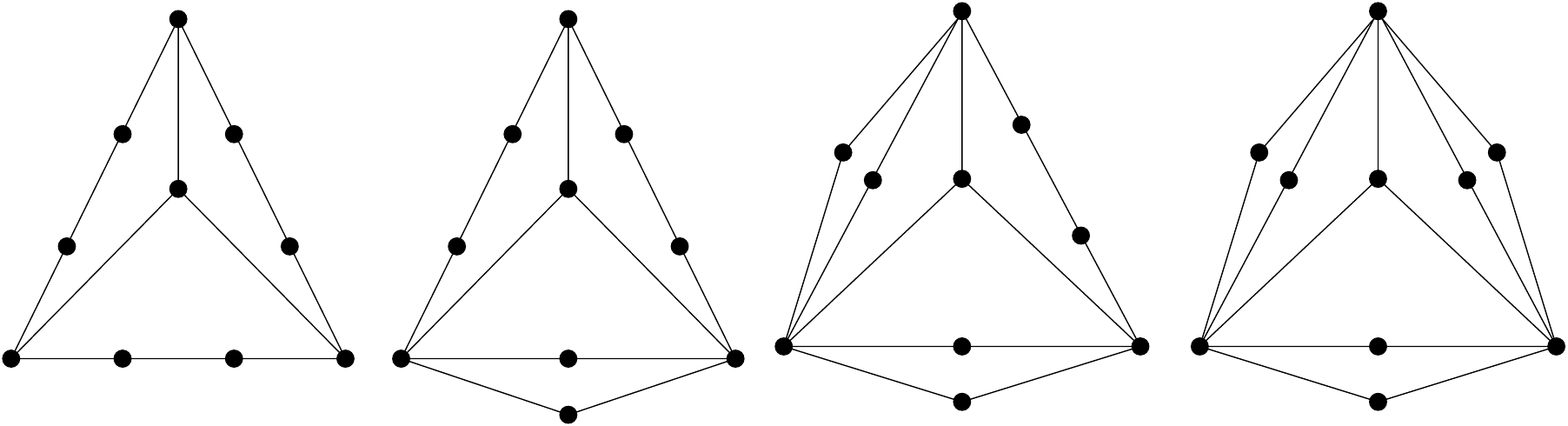}
            \caption{$\mathcal{O}_4^4$}\label{fig:o44}
        \end{subfigure}
        \caption{The graphs in $\mathcal{O}_4=\mathcal{O}_4^1\cup\mathcal{O}_4^2\cup\mathcal{O}_4^3\cup\mathcal{O}_4^4$}\label{fig:o4}
\end{figure}

\begin{lemma}\label{lem:o4}
Let $\mathcal{O}_4^1, \mathcal{O}_4^2, \mathcal{O}_4^3, \mathcal{O}_4^4$ be the sets of graphs in Figure~\ref{fig:o4}.
If a graph $G$ is obtained from $K_4$ by good-subdividing some of its edges, 
then $G\in \mathcal{O}_2$ if and only if $G\in \mathcal{O}_4=\mathcal{O}_4^1 \cup \mathcal{O}_4^2\cup \mathcal{O}_4^3\cup \mathcal{O}_4^4$. 
\end{lemma}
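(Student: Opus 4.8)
The plan is to treat a graph $G$ obtained from $K_4$ by good-subdividing some edges as an assignment $s\colon E(K_4)\to\{0,1,2\}$, where $s(e)$ records whether the edge $e$ is kept, $1$-subdivided, or $2$-subdivided; by Lemma~\ref{lem:p4c4equiv} we may disregard $11$-subdivisions and restore the corresponding obstructions afterwards. Since $s$ ranges over a $3$-element set on the $6$ edges of $K_4$ and $\operatorname{Aut}(K_4)$ acts on these assignments, only finitely many $G$ arise, conveniently grouped by the isomorphism type of the set of subdivided edges (empty, one edge, two adjacent or two non-adjacent edges, a triangle, a path, a star, and so on) together with the $\{1,2\}$-labelling of that set. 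The lemma then has two halves: for ``$\Leftarrow$'' we must show that every $G\in\mathcal O_4$ is minor-minimal subject to $\mmw\ge 3$, and for ``$\Rightarrow$'' we must show that every good-subdivision of $K_4$ with $\mmw\ge 3$ has a minor in $\mathcal O_4$.

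Both halves rest on a shortcut: a graph with no $K_4$-minor has branchwidth at most $2$ (as $K_4$ is the unique obstruction for branchwidth $\le 2$) and hence $\mmw\le 2$ by $\mmw(H)\le\max(\brw(H),1)$. Now the only vertices of a good-subdivision $G$ of $K_4$ of degree $\ge 3$ are the four branch vertices, and a graph has a $K_4$-minor exactly when it contains a subdivision of $K_4$ as a subgraph; consequently, among the single-element minors $G\setminus e$, $G\setminus v$, $G/e$, every one that still has a $K_4$-minor is obtained by contracting an edge inside a subdivision path, i.e.\ it is a good-subdivision of $K_4$ whose assignment is $s$ with one positive coordinate lowered by $1$. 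In particular the minor order restricted to good-subdivisions of $K_4$ is just the coordinatewise order on $s$. So to check minor-minimality of a $G\in\mathcal O_4$ it is enough to verify that lowering one nonzero coordinate of $s$ produces a graph of $\mmw\le 2$; and for ``$\Rightarrow$'', starting from any good-subdivision of $K_4$ with $\mmw\ge 3$ we may repeatedly lower positive coordinates while $\mmw$ stays $\ge 3$ until no further such step exists, whereupon the result is still a good-subdivision of $K_4$, is minor-minimal subject to $\mmw\ge 3$, and (having a subdivided edge) is not $3$-connected, so it lies in $\mathcal O_2$. Thus everything reduces to pinning down the $\le$-minimal assignments $s$ for which $\mmw\ge 3$.

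It therefore suffices to decide $\mmw(G)$ for each good-subdivision $G$ of $K_4$. For the upper bound I would, case by case, partition $V(G)$ into three sets each of which is a good-side of a $2$-cut or has at most two vertices and invoke Lemma~\ref{lem:mmw2}; the three perfect matchings $\{12,34\}$, $\{13,24\}$, $\{14,23\}$ of $K_4$ give the natural skeleton, each subdivided edge $\{u,v\}\cup I_{uv}$ being a $P_3$ or $P_4$ good-side and the few leftover internal vertices absorbed into a part of size $\le 2$. For the lower bound the minimal ``bad'' assignments receive an $\mm$-tangle of order $3$ as in Lemma~\ref{lem:mmw3} — equivalently, a check that no three good-sides of $G$ cover $V(G)$ — while the remaining bad cases are seen to contain one of the minimal ones as a minor. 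Running through the finitely many assignments $s$ up to the symmetry of $K_4$ then singles out the minimal $\mmw\ge 3$ good-subdivisions of $K_4$, and these are exactly the graphs of $\mathcal O_4$ (together with the $11$-subdivision variants supplied by Lemma~\ref{lem:p4c4equiv}).

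The main difficulty is the case bookkeeping in the upper-bound step: one must produce an explicit three-part covering for every subdivision pattern that is neither an $\mathcal O_4$ pattern nor above one, and this is delicate because a good-side may only use a $2$-cut contained inside the part, so the four branch vertices and the up-to-six internal segments have to be distributed very carefully; dually, one must be certain the list of minimal $\mmw\ge 3$ patterns is complete. The reduction to assignments $s\in\{0,1,2\}^{E(K_4)}$ modulo $\operatorname{Aut}(K_4)$ keeps the number of cases manageable, but the individual coverings and tangle verifications still have to be carried out one by one.
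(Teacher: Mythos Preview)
Your proposal is correct and follows essentially the paper's approach: reduce $11$-subdivisions to $2$-subdivisions via Lemma~\ref{lem:p4c4equiv}, then case-split on the pattern $s\in\{0,1,2\}^{E(K_4)}$ up to symmetry, using Lemma~\ref{lem:mmw2} for the upper bounds and the tangles of Lemma~\ref{lem:mmw3} for the lower bounds. Your additional observation---that every single-element minor of a $K_4$-subdivision either loses its $K_4$-minor (whence $\mmw\le 2$ via the branchwidth bound) or is again a good-subdivision with one coordinate of $s$ lowered---is a clean way to confine the minor-minimality check to the coordinatewise order on $s$, a step the paper leaves implicit when it says the proper minors ``can be easily checked''.
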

\begin{proof}
By Lemma \ref{lem:mmw3} the graphs in $\mathcal{O}_4$ has \MMW at least $3$.
It can be easily checked that all their proper minors have \MMW at most $2$ using Lemma~\ref{lem:mmw2}.

Now we consider the graphs obtained from $K_4$ by good-subdivisions. 
We divide the cases via the number of good-subdivisions.
Recall that by Lemma~\ref{lem:p4c4equiv}, we only consider $2$-subdivision and not $11$-subdivision.

If $G$ has no $2$-subdivision and has at most four $1$-subdivisions, then $\mmw(G)\le 2$ by Lemma~\ref{lem:mmw2}. 
The unique graph with no $2$-subdivision and five $1$-subdivisions is in $\mathcal{O}_4^1$.

%

If $G$ has one $2$-subdivision and at most three $1$-subdivisions, 
then $\mmw(G)\le 2$ by Lemma~\ref{lem:mmw2} unless $G$ is the first graph in $\mathcal{O}_4^2$.
If $G$ has one $2$-subdivision and four $1$-subdivisions, then $G$ contains the graph in $\mathcal{O}_4^1$ as a minor.

If $G$ has two $2$-subdivisions and at most two $1$-subdivisions,
then either $G$ has \MMW $2$,
$G$ contains a graph in $\mathcal{O}_4^2$ as a minor, or
$G$ is the first graph in $\mathcal{O}_4^3$. The rest of $\mathcal{O}_4^3$ is obtained by replacing 2-subdivisions with 11-subdivisions; see Lemma \ref{lem:p4c4equiv}.
If $G$ has more than two 1-subdivisions, then $G$ contains the graph in $\mathcal{O}_4^1$ as a minor.

If $G$ has three 2-subdivisions and no $1$-subdivision, then $\mmw(G)\le 2$ by Lemma~\ref{lem:mmw2} unless $G$ is the first graph in $\mathcal{O}_4^4$.
If $G$ has three 2-subdivisions and at least one 1-subdivision, 
then $G$ contains a graph in $\mathcal{O}_4^2 \cup \mathcal{O}_4^3$ as a minor.
\end{proof}

Lemma~\ref{lem:o5} and~\ref{lem:o6}, respectively, characterizes the graphs in $O_2$ that is obtained from a $3$-connected graph onfive and six vertices.

\begin{figure}[h]
        \centering
        \begin{subfigure}[b]{\textwidth}
            \centering
            \includegraphics[height=30pt]{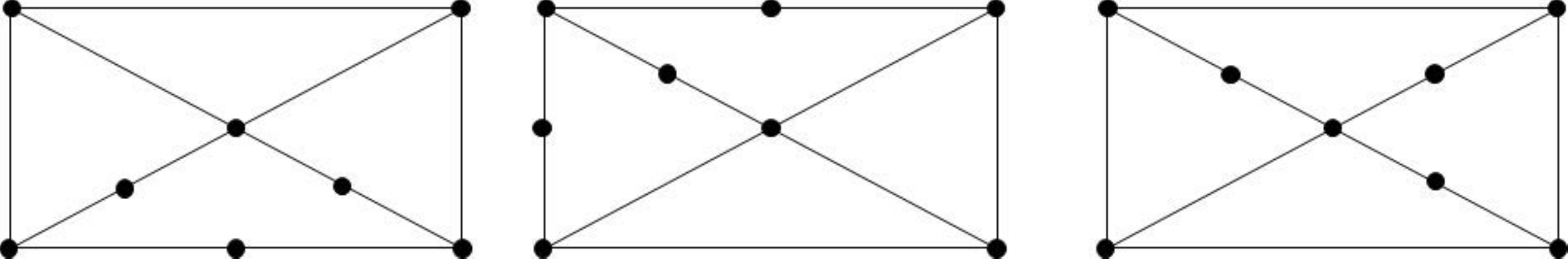}
            \caption{$\mathcal{O}_5^1$}\label{fig:o51}
        \end{subfigure}
        \begin{subfigure}[b]{\textwidth}
            \centering
            \includegraphics[height=60pt]{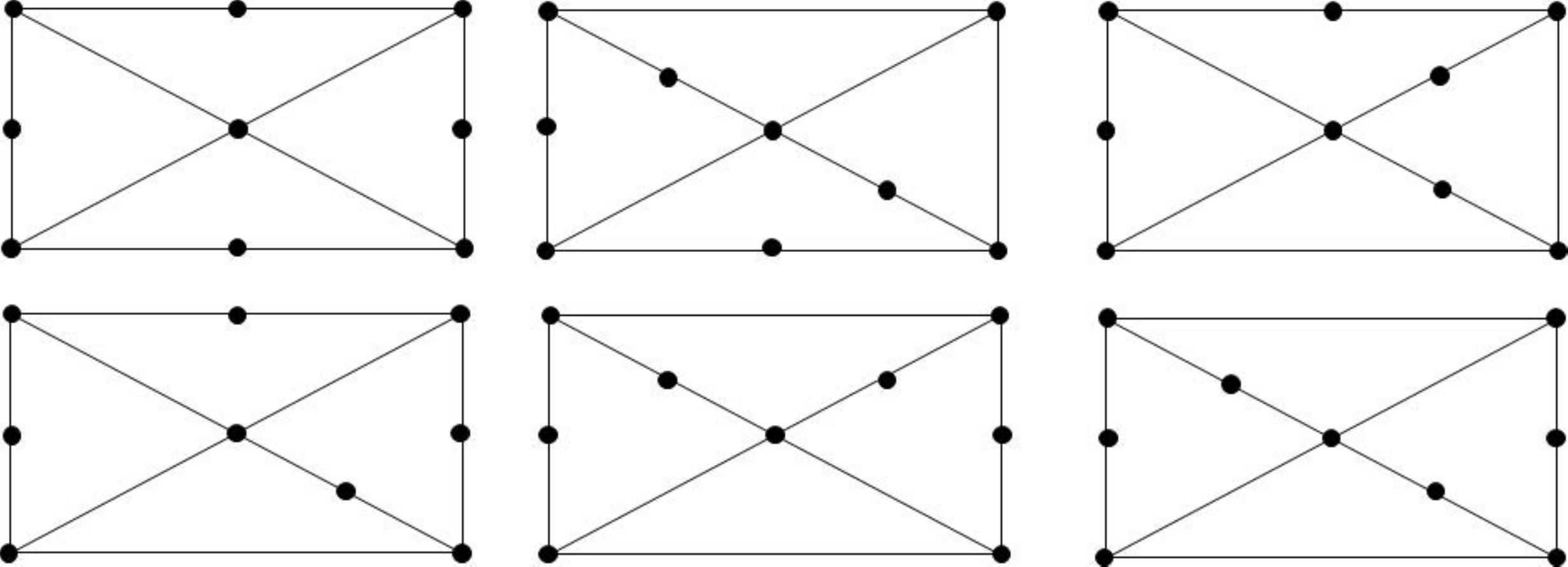}
            \caption{$\mathcal{O}_5^2$}\label{fig:o52}
        \end{subfigure}

	\begin{subfigure}[b]{0.3\textwidth}
            \centering
            \includegraphics[height=60pt]{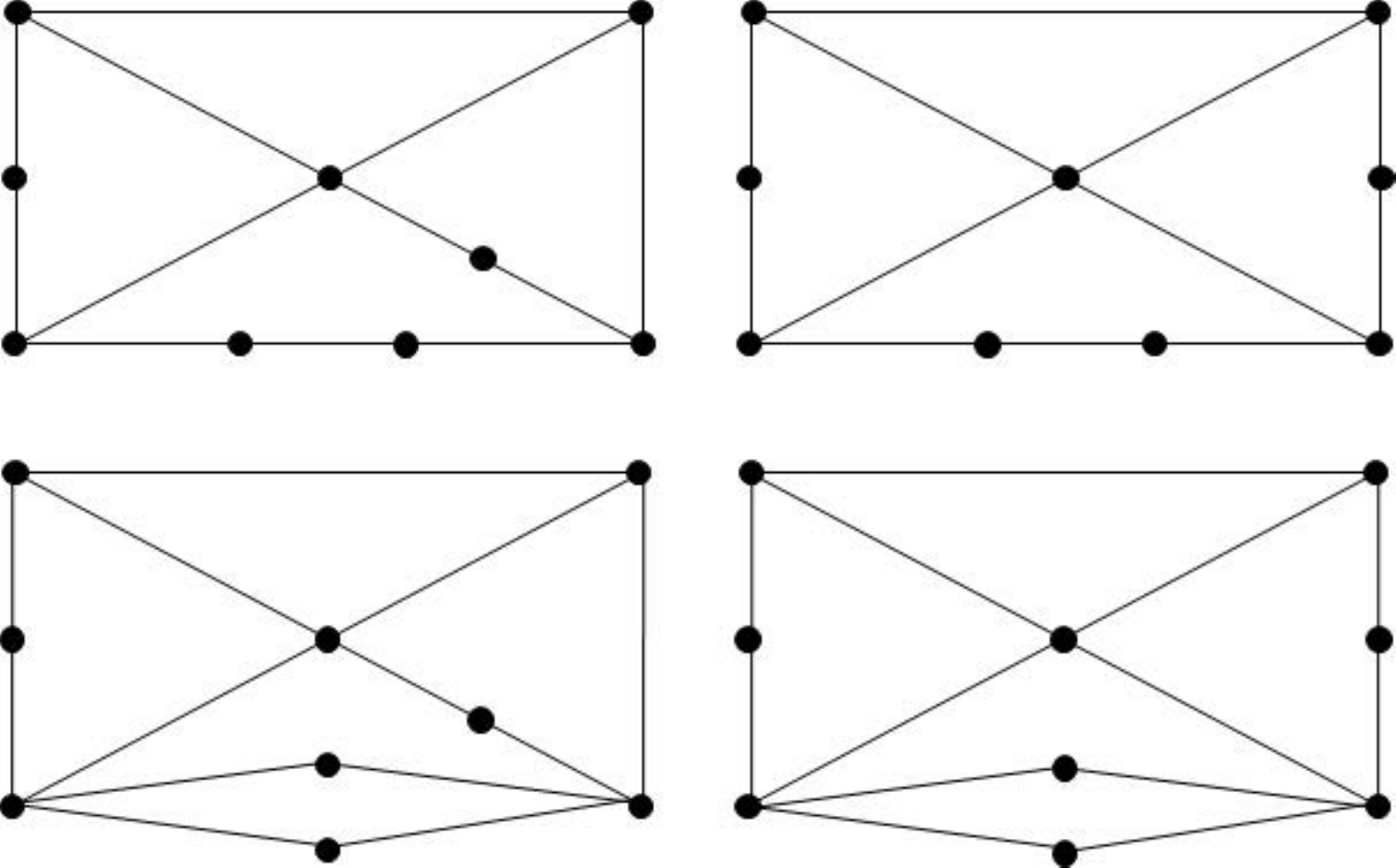}
            \caption{$\mathcal{O}_5^3$}\label{fig:o53}
        \end{subfigure}
	\begin{subfigure}[b]{0.3\textwidth}
            \centering
            \includegraphics[height=60pt]{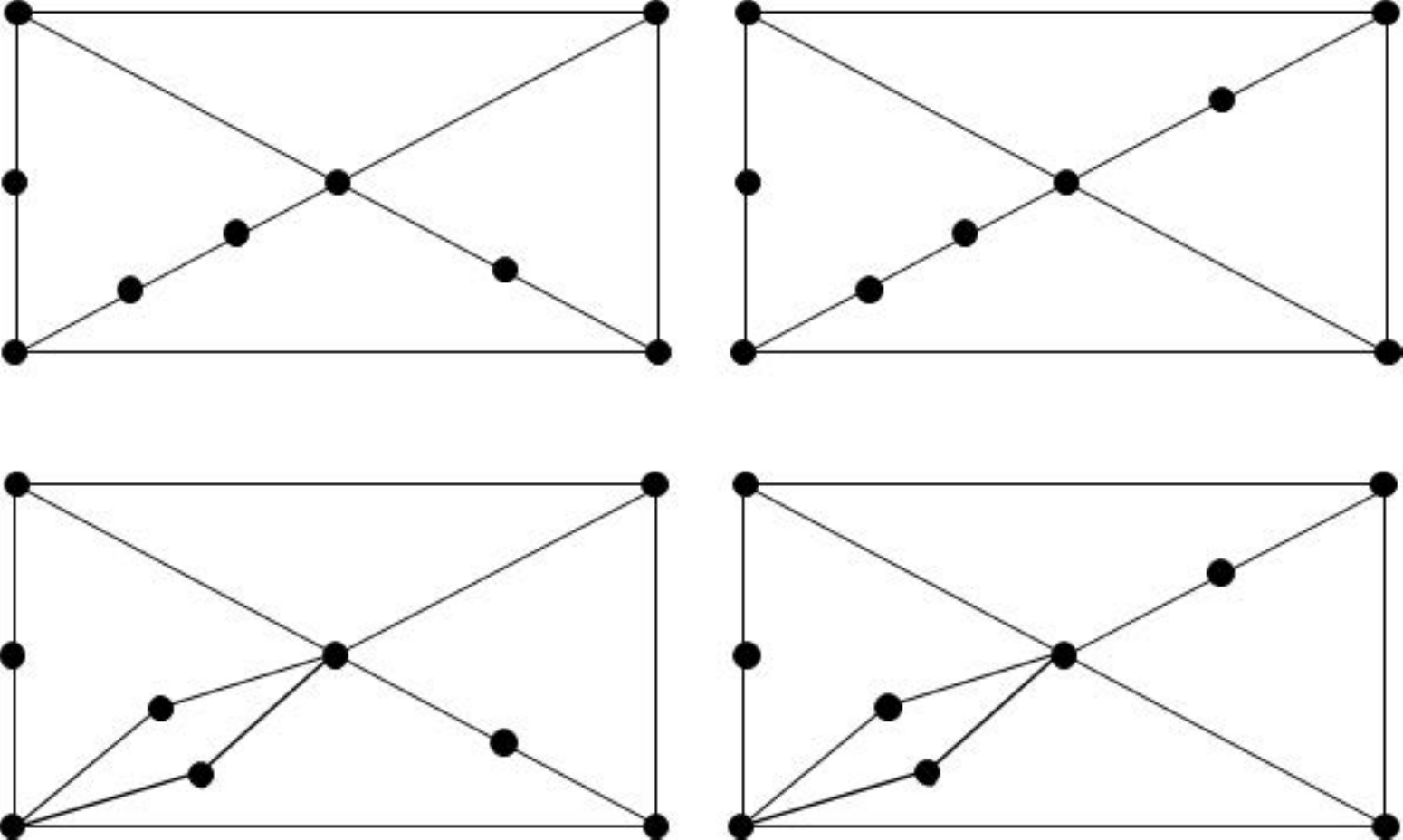}
            \caption{$\mathcal{O}_5^4$}\label{fig:o54}
        \end{subfigure}
	\begin{subfigure}[b]{0.3 \textwidth}
            \centering
            \includegraphics[height=60pt]{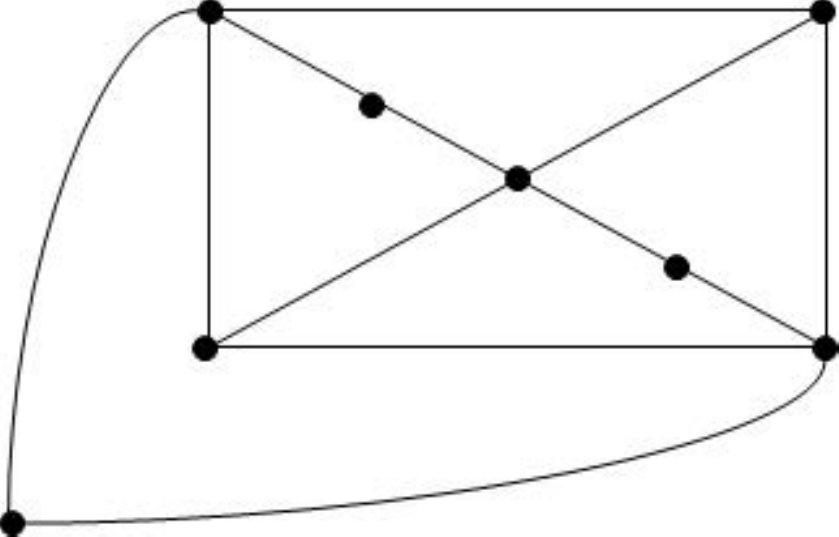}
            \caption{$\mathcal{O}_5^5$}\label{fig:o55}
        \end{subfigure}

        \caption{The graphs in $\mathcal{O}_5=\mathcal{O}_5^1\cup\mathcal{O}_5^2\cup\mathcal{O}_5^3\cup\mathcal{O}_5^4\cup\mathcal{O}_5^5$}\label{fig:o5}
\end{figure}

\begin{lemma}\label{lem:o5}
Let $\mathcal{O}_5^1,\mathcal{O}_5^2,\mathcal{O}_5^3,\mathcal{O}_5^4,\mathcal{O}_5^5$ be the sets of graphs in Figure~\ref{fig:o5}.
If a graph $G$ is obtained from a $3$-connected graph on $5$ vertices by good-subdividing some edges, 
then $G\in \mathcal{O}_2$ if and only if 
$G\in \mathcal{O}_5=\mathcal{O}_5^1\cup\mathcal{O}_5^2\cup\mathcal{O}_5^3\cup\mathcal{O}_5^4\cup\mathcal{O}_5^5$. 
\end{lemma}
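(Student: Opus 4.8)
The plan is to mirror the structure of the proof of Lemma~\ref{lem:o4}, but starting from the list of $3$-connected graphs on $5$ vertices instead of $K_4$. First I would enumerate the $3$-connected graphs on $5$ vertices: up to isomorphism these are $K_5$, $K_5$ minus one edge, $K_5$ minus a perfect matching's worth of edges where this remains $3$-connected (i.e.\ $W_4$, the wheel), and the graph $K_{2,3}$ together with any $3$-connected graphs between these. After fixing the finite list of base graphs, for each base graph I would record its automorphism group, since two good-subdivision patterns differing by an automorphism of the base produce isomorphic graphs and need not be considered separately.

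The forward direction (every graph in $\mathcal{O}_5$ lies in $\mathcal{O}_2$) splits into two parts exactly as in Lemma~\ref{lem:o4}: by Lemma~\ref{lem:mmw3} each graph in $\mathcal{O}_5$ has \MMW at least $3$, and one checks that every proper minor has \MMW at most $2$ by exhibiting the appropriate partition $\{V_1,V_2,V_3\}$ and applying Lemma~\ref{lem:mmw2}. The latter is the bookkeeping part: a proper minor is obtained by deleting or contracting an edge, or shortening a good-subdivision, and in each case one must produce a covering of the vertex set by three good-sides (or two good-sides plus a set of at most two vertices). By Lemma~\ref{lem:p4c4equiv} I only need to treat $1$-subdivisions and $2$-subdivisions and may silently append the $11$-subdivision variants afterwards.

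For the reverse direction I would, for each base graph $H_0$ on $5$ vertices, perform a case analysis on how many edges are $2$-subdivided and how many are $1$-subdivided. The key mechanism is: if the resulting graph $G$ can be covered by three good-sides, or two good-sides plus $\leq 2$ leftover vertices, then $\mmw(G)\le 2$ by Lemma~\ref{lem:mmw2} and $G\notin\mathcal{O}_2$; if $G$ properly contains a graph already known to be in $\mathcal{O}$ (in particular any graph in $\mathcal{O}_3\cup\mathcal{O}_4$, or an earlier graph in $\mathcal{O}_5$) as a minor, then again $G\notin\mathcal{O}_2$; and the finitely many patterns surviving both tests are exactly the graphs drawn in Figure~\ref{fig:o5}, each of which has \MMW $3$ by Lemma~\ref{lem:mmw3} and minor-minimality from the first test applied to its proper minors. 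One also needs the observation that once too many edges are subdivided the graph acquires a vertex set too large to have \MMW $2$ unless it already contains a smaller obstruction, which bounds the case analysis and in particular caps the total number of good-subdivisions.

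The main obstacle I expect is purely the combinatorial explosion of the case analysis: a $3$-connected graph on $5$ vertices can have up to $10$ edges, each edge admitting none, $1$-, or $2$-subdivision, so even after quotienting by automorphisms there are many patterns to examine, and for each surviving candidate one must both construct the tangle (via the recipe in Lemma~\ref{lem:mmw3}) and verify minor-minimality by covering every proper minor. Keeping this organized — ideally by increasing number of subdivisions, pruning aggressively with the ``contains an earlier obstruction'' test, and exploiting symmetry — is the real work; the individual \MMW computations are each routine given Lemmas~\ref{lem:mmw2} and~\ref{lem:mmw3}.
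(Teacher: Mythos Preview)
Your overall plan matches the paper's: enumerate the $3$-connected base graphs on five vertices, then for each base perform a case analysis on the pattern of good-subdivisions, using Lemma~\ref{lem:mmw2} to discard candidates of \MMW $\le 2$, containment of earlier obstructions to discard non-minimal ones, and Lemmas~\ref{lem:mmw3} and~\ref{lem:p4c4equiv} to certify the survivors.

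One concrete error: your enumeration of the $3$-connected graphs on five vertices is wrong. The graph $K_{2,3}$ is not $3$-connected (the two-vertex side is a $2$-cut), and an odd vertex set admits no perfect matching, so ``$K_5$ minus a perfect matching'' is not meaningful. There are exactly three such graphs: the wheel $W_5$, the wheel plus one chord $W_5'$ (equivalently $K_5$ minus an edge), and $K_5$. The paper handles $W_5$ by the direct case analysis you describe, but for $W_5'$ and $K_5$ it uses a shortcut worth noting: if the base $H$ has an edge $e$ with $H\setminus e$ still $3$-connected and $e$ is \emph{not} good-subdivided in $G$, then one deletes $e$ and appeals to the already-completed analysis of the base $H\setminus e$ (together with Lemma~\ref{lem:mmw2} applied directly to $G$ when few subdivisions are present). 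For $W_5'$ this forces all three edges of its triangle of degree-$4$ vertices to be subdivided, pinning down $\mathcal{O}_5^5$ immediately; for $K_5$ it forces every edge to be subdivided, which then cannot be minor-minimal. This cuts the combinatorial explosion you anticipate quite a bit.
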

\begin{proof}
By Lemma \ref{lem:mmw3} the graphs in $\mathcal{O}_5$ has \MMW at least 3. Their proper minors have \MMW $2$ by Lemma \ref{lem:mmw2} and hence they are in $\mathcal{O}_2$.

Now we consider the graphs that are also obtained from a 3-connected graph on 5 vertices by good-subdivisions. There are three 3-connected graphs on 5 vertices, namely the wheel $W_5$, $W_5$ plus an edge (say $W_5'$), and $K_5$.

Let us begin with $W_5$. Let $G$ be a graph obtained from $W_5$ by good-subdividing some edges.

Suppose that $G$ has no 2-subdivision and has three 1-subdivisions. If the to-be-subdivided edges of $W_5$ contain two independent edges, then Lemma~\ref{lem:mmw2} implies $\mmw(G) \leq 2$. Thus $G\in \mathcal{O}_2$ if and only if $G\in \mathcal{O}_5^1$. 
If $G$ has no 2-subdivision and has four 1-subdivisions, then it has \MMW $3$; a tangle of order 3 can be found in each case as in Lemma \ref{lem:mmw3}.
So in this case $G\in \mathcal{O}_2$ if and only if it does not have a graph in $\mathcal{O}_5^1$ as a minor. 
These are the graphs in $\mathcal{O}_5^2$.

If the number of $1$-subdivisions and $2$-subdivisions in $G$ is at least $4$, then $G$ contains a graph in $\mathcal{O}_5^1\cup\mathcal{O}_5^2$ as a minor. Suppose $G$ has one $2$-subdivision and two $1$-subdivisions. If the good-side of the 2-subdivision does not intersect with one of the other two good-sides, then Lemma~\ref{lem:mmw2} implies $\mmw (G) \leq 2$. Thus both good-sides of the $1$-subdivisions intersect with the good-side of the $2$-subdivision. If the $2$-subdivision happens at an edge incident with the vertex of degree $4$ in $W_5$, then $G \in \mathcal{O}_2$ if and only if $G$ is one of the top two graphs in $\mathcal{O}_5^4$; other cases contain a graph in $\mathcal{O}_5^1$ as a minor. The bottom two graphs in $\mathcal{O}_5^4$ are obtained by replacing the $2$-subdivision with a $11$-subdivision. If the $2$-subdivision is not incident with the degree-$4$ vertex of $W_5$, then we get the graphs in $\mathcal{O}_5^3$.

If $G$ has at least two $2$-subdivisions, then either $\mmw(G) \leq 2$
or it contains a graph in $\mathcal{O}_5^1\cup\mathcal{O}_5^2\cup\mathcal{O}_5^3\cup\mathcal{O}_5^4$ as a minor. It completes the graphs obtained from $W_5$.

Now we consider the graphs $G \in \mathcal{O}_2$ obtained from $W_5'$ by good-subdivisions. The graph $W_5'$ has three edges whose removal results in $W_5$. Suppose one of these three edges, say $e$, is not good-subdivided in $G$. If $G$ has at least four good-subdivisions, then $G-e$ contains a graph in $\mathcal{O}_5^1\cup\mathcal{O}_5^2\cup\mathcal{O}_5^3\cup\mathcal{O}_5^4$ as a minor. If $G$ has at most three good-subdivisions and $G-e$ does not contain a graph in $\mathcal{O}_5^1$ as a minor, then  Lemma \ref{lem:mmw2} implies $\mmw(G) \leq 2$. Hence all three edges of $W_5'$ in the triangle of degree-4 vertices must be good-subdivided in $G$. Since the graph in $\mathcal{O}_5^5$ is in $\mathcal{O}_2$, it is the unique graph obtained from $W_5'$ in $\mathcal{O}_2$.

The last $3$-connected graph on five vertices is $K_5$. Let $G$ be a graph obtained from $K_5$ by good-subdivisions. Using an argument similar to above we can show that every edge of $K_5$ must be subdivided. Hence $G \notin \mathcal{O}_2$ and $\mathcal{O}_5$ is the precise set of obstructions obtained from a $3$-connected graph on five vertices.
\end{proof}

\begin{figure}
        \centering
        \begin{subfigure}[b]{0.2\textwidth}
            \centering
            \includegraphics[height=50pt]{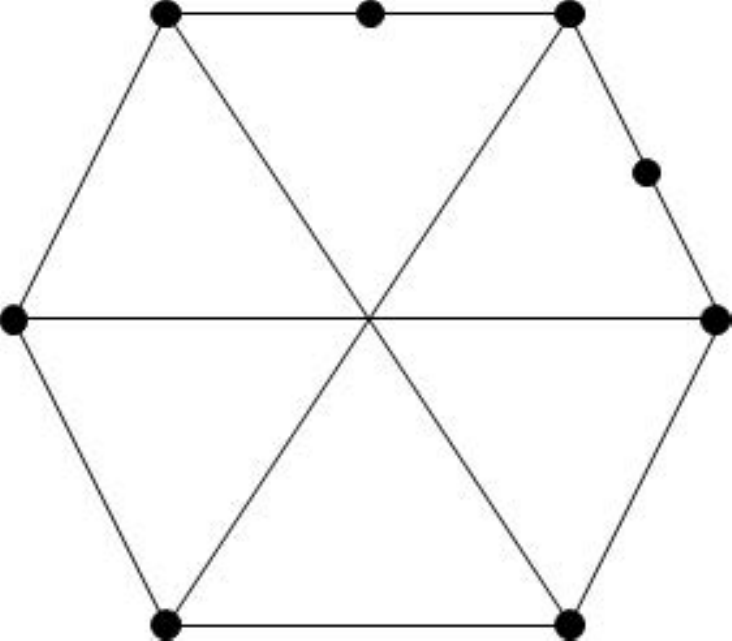}
            \caption{$\mathcal{O}_6^1$}\label{fig:o61}
        \end{subfigure}
        \begin{subfigure}[b]{0.2\textwidth}
            \centering
            \includegraphics[height=50pt]{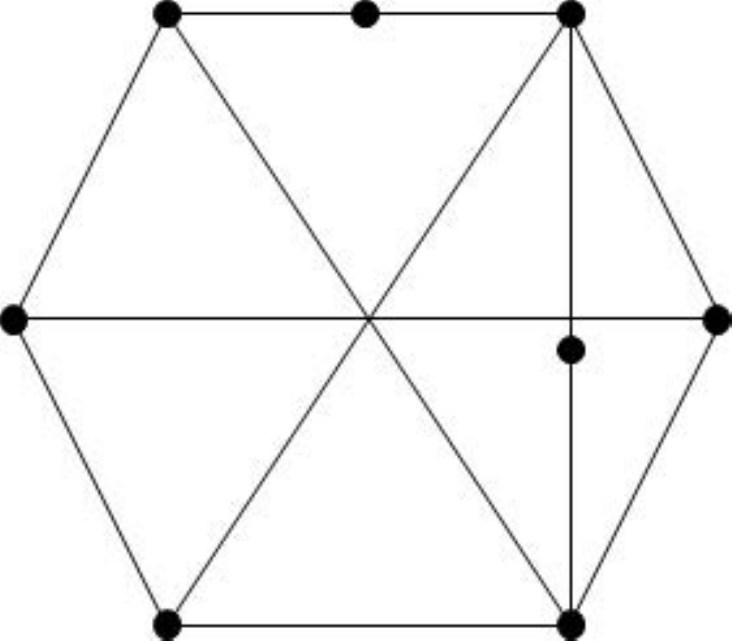}
            \caption{$\mathcal{O}_6^2$}\label{fig:o62}
        \end{subfigure}
        \begin{subfigure}[b]{0.2\textwidth}
            \centering
            \includegraphics[height=50pt]{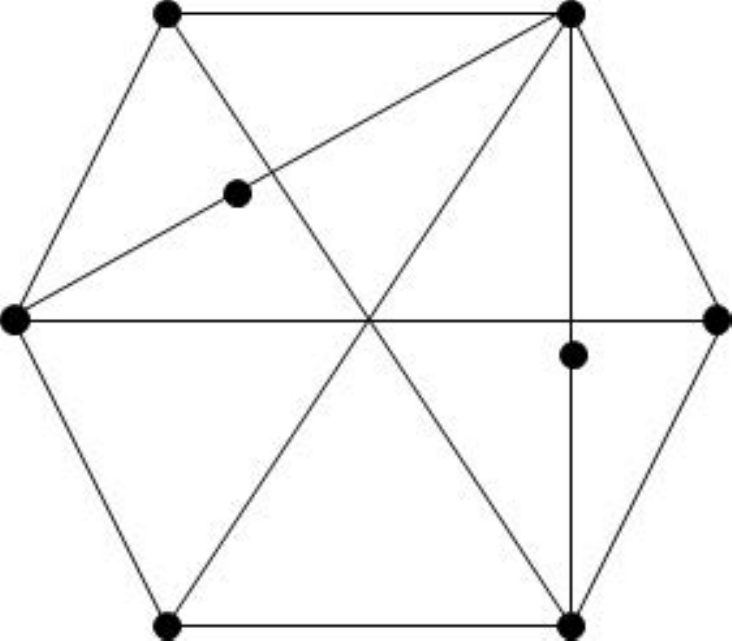}
            \caption{$\mathcal{O}_6^3$}\label{fig:o63}
        \end{subfigure}
        \vspace{0.7cm}
        
	   \begin{subfigure}[b]{0.35\textwidth}
            \centering
            \includegraphics[height=40pt, width=150pt]{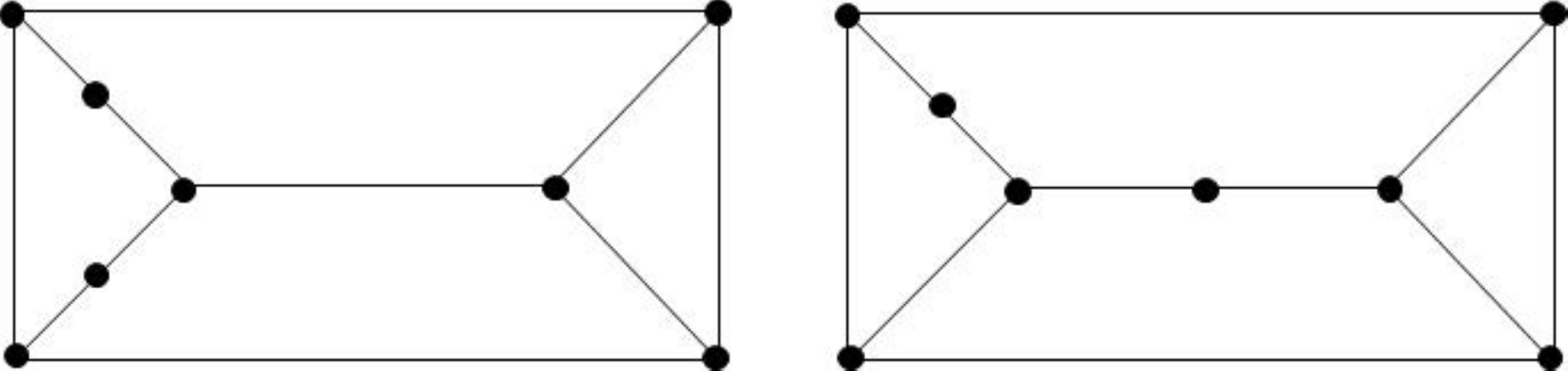}
            \caption{$\mathcal{O}_6^4$}\label{fig:o64}
        \end{subfigure}~~~~~~~~~~~~
        \begin{subfigure}[b]{0.45\textwidth}
            \centering
            \includegraphics[height=40pt, width=220pt]{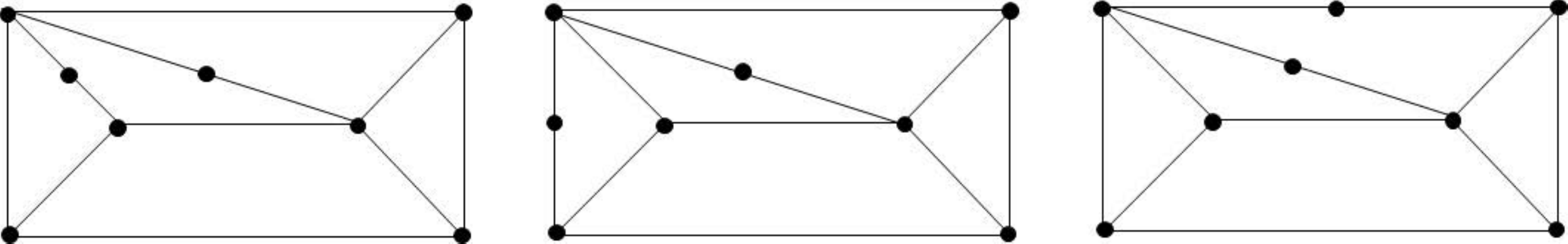}
            \caption{$\mathcal{O}_6^5$}\label{fig:o65}
        \end{subfigure}
       \vspace{0.7cm}
 
        \begin{subfigure}[b]{\textwidth}
            \centering
            \includegraphics[height=60pt]{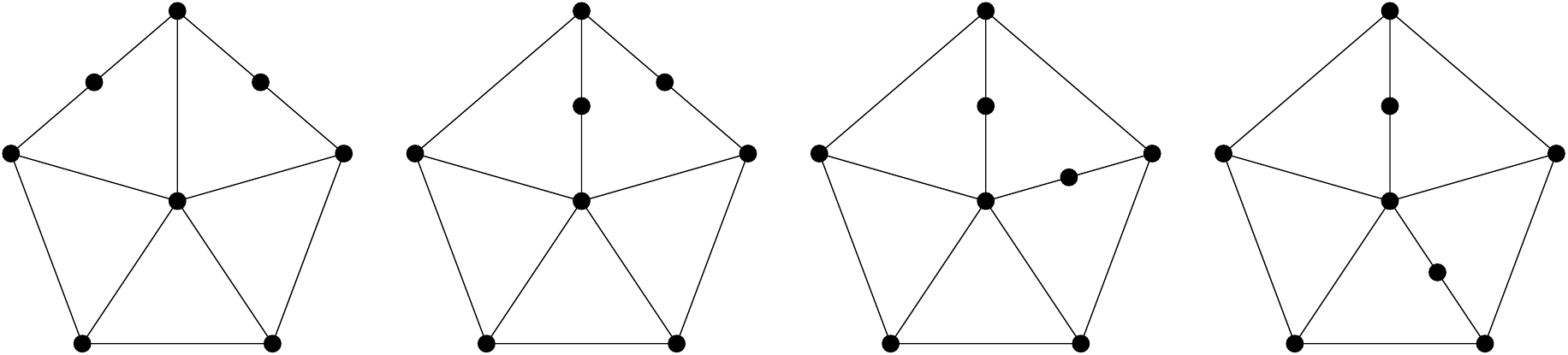}
            \caption{$\mathcal{O}_6^6$}\label{fig:o66}
        \end{subfigure}

        \caption{The graphs in $\mathcal{O}_6=\mathcal{O}_6^1\cup\mathcal{O}_6^2\cup\mathcal{O}_6^3\cup\mathcal{O}_6^4\cup\mathcal{O}_6^5\cup\mathcal{O}_6^6$}\label{fig:o6}
\end{figure}

\begin{lemma}\label{lem:o6}
Let $\mathcal{O}_6^1, \mathcal{O}_6^2, \mathcal{O}_6^3, \mathcal{O}_6^4, \mathcal{O}_6^5, \mathcal{O}_6^6$ be the sets of graphs in Figure~\ref{fig:o6}.
If a graph $G$ is obtained from a $3$-connected graph on $6$ vertices by good-subdivisions,
then $G\in \mathcal{O}_2$ if and only if $G\in \mathcal{O}_6=\mathcal{O}_6^1 \cup \mathcal{O}_6^2\cup \mathcal{O}_6^3\cup \mathcal{O}_6^4\cup \mathcal{O}_6^5\cup \mathcal{O}_6^6$.
\end{lemma}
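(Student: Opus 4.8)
The plan is to follow the same template as the proofs of Lemmas~\ref{lem:o4} and~\ref{lem:o5}: the ``$\Leftarrow$'' direction (every graph in $\mathcal{O}_6$ is in $\mathcal{O}_2$) follows immediately from Lemma~\ref{lem:mmw3} together with a routine check via Lemma~\ref{lem:mmw2} that every proper minor has \MMW at most $2$, so the real work is the ``$\Rightarrow$'' direction. First I would enumerate the $3$-connected graphs on $6$ vertices; there are only finitely many (and indeed they appear in the atlas~\cite{read1998atlas}), and for each one we must determine which good-subdivisions of its edges land in $\mathcal{O}_2$. By Lemma~\ref{lem:p4c4equiv} we may ignore $11$-subdivisions throughout, reinstating the corresponding obstructions at the end without comment, exactly as in the earlier lemmas. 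So the task reduces to: for each $3$-connected base graph $H$ on $6$ vertices, classify the subsets $F\subseteq E(H)$ together with a labeling of each $e\in F$ as a $1$- or $2$-subdivision, modulo the automorphisms of $H$.

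For each such candidate $G$ there are three mutually exclusive outcomes to verify. Either (i) $\mmw(G)\le 2$, which we certify by exhibiting a partition $\{V_1,V_2,V_3\}$ of $V(G)$ and vertex pairs satisfying the hypotheses of Lemma~\ref{lem:mmw2} — concretely, we try to cover $G$ by three good-sides, or by two good-sides and a set of at most two vertices; or (ii) $G$ contains a graph already on our obstruction list (something in $\mathcal{O}_3\cup\mathcal{O}_4\cup\mathcal{O}_5$, or an earlier graph in $\mathcal{O}_6$) as a proper minor, so $G\notin\mathcal{O}_2$; or (iii) $G$ is itself minor-minimal, in which case it must appear in $\mathcal{O}_6$ — its membership in $\mathcal{O}_2$ being witnessed by Lemma~\ref{lem:mmw3} for the lower bound and Lemma~\ref{lem:mmw2} applied to each proper minor for minimality. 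As in Lemma~\ref{lem:o5}, I would organize the case analysis by the number of $2$-subdivisions and then by the number of $1$-subdivisions, using monotonicity: once enough good-subdivisions have been applied that some listed obstruction appears as a minor, all further good-subdivisions of that configuration are disposed of at once. A useful reduction is that if some edge $e$ of $H$ lies in a ``smaller'' $3$-connected subgraph structure (e.g.\ $H\setminus e$ or $H/e$ is $3$-connected on $\le 5$ vertices, via the Tutte wheel theorem), then a graph $G$ that does not good-subdivide $e$ is, up to the relevant minor operation on $e$, a good-subdivision of that smaller base graph and is handled by Lemma~\ref{lem:o4} or~\ref{lem:o5}; this forces the ``surviving'' configurations to subdivide a prescribed set of edges, drastically cutting the number of cases — the way the triangle of degree-$4$ vertices had to be fully subdivided in the $W_5'$ case.

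The main obstacle is sheer bookkeeping: with several base graphs on $6$ vertices, each with many edges and two subdivision types per edge, the number of configurations before symmetry reduction is large, and the argument's correctness rests on (a) not missing any base graph, (b) correctly computing automorphism orbits of edge-subsets so that no genuinely distinct case is conflated, and (c) for each ``$\mmw(G)\le 2$'' claim actually producing the Lemma~\ref{lem:mmw2} cover rather than asserting it. I expect that, as in the proof of Lemma~\ref{lem:o5}, most cases collapse quickly — either an obstruction minor is visible or a three-part cover is easy — and only a handful of borderline configurations per base graph need genuine attention; those borderline cases are precisely the graphs listed in $\mathcal{O}_6^1,\dots,\mathcal{O}_6^6$. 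I would present the proof base-graph by base-graph, in each stating which edges must be subdivided in any surviving configuration, then listing the finitely many survivors and identifying each as either reducible (giving the minor) or a member of $\mathcal{O}_6$, and finally noting that the $11$-subdivision variants are appended by Lemma~\ref{lem:p4c4equiv}. Routine verifications of the covers and of the minor containments are left to the reader, consistent with the level of detail in the preceding lemmas.
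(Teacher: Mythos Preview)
Your brute-force framework is sound in principle, but the paper organizes the proof around a structural shortcut you do not mention, and your proposed reduction has a flaw.

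The paper's first move is a clean dichotomy specific to $6$-vertex $3$-connected bases: if two \emph{adjacent} edges of $H$ are good-subdivided then one can always exhibit a tangle of order~$3$ in $G$, whereas if the good-subdivided edges form a matching of $H$ then Lemma~\ref{lem:mmw2} gives $\mmw(G)\le 2$. This instantly says that any $G\in\mathcal{O}_2$ arising from such an $H$ has \emph{exactly two} good-subdivisions, on adjacent edges. For the minimally $3$-connected bases ($K_{3,3}$, the prism, $W_6$) one then just lists the orbits of adjacent edge-pairs, which is how $\mathcal{O}_6^1,\mathcal{O}_6^4,\mathcal{O}_6^6$ fall out. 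For a non-minimal $H$ with a removable edge $e$ (so $H\setminus e$ is still $3$-connected on six vertices), the dichotomy applied to $G\setminus e$ shows $\mmw(G\setminus e)\ge 3$ whenever $e$ is not subdivided, contradicting minimality; hence every removable edge must be subdivided. Since the subdivided edges must also be pairwise adjacent, $H$ has at most two removable edges, and they are adjacent --- leaving only $K_{3,3}$ plus one or two edges and the prism plus one edge to check. This is dramatically shorter than a subdivision-count case analysis over all $3$-connected $6$-vertex graphs.

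Your proposed reduction --- contracting or deleting an unsubdivided edge $e$ to land in a $3$-connected graph on at most five vertices and invoking Lemmas~\ref{lem:o4}--\ref{lem:o5} --- does not force $e$ to be subdivided the way you claim. If $G\in\mathcal{O}_2$ then every proper minor of $G$, in particular $G/e$, has $\mmw\le 2$; Lemma~\ref{lem:o5} then only tells you $G/e\notin\mathcal{O}_5$, which is no constraint at all. (In the $W_5'$ argument you cite, the paper does not conclude ``$e$ must be subdivided'' from the earlier lemma alone; it pairs that with a separate, direct Lemma~\ref{lem:mmw2} verification that $\mmw(G)\le 2$ in the residual cases.) The paper's reduction instead deletes $e$ while \emph{staying} at a $3$-connected $6$-vertex base and reuses the adjacency dichotomy, which is what actually yields the contradiction.
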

\begin{proof}
Let $H$ be a 3-connected graph on six vertices and let $G$ be a graph obtained from $H$ by good-subdividing some edges. If two adjacent edges of $H$ are good-subdivided in $G$, then we can find a tangle of order 3 in $G$ and hence $\mmw(G) \geq 3$; all graphs in $\mathcal{O}_6$ are of this type. If there is no such pair in $H$, then the good-subdivisions happened at a matching of $H$ and Lemma~\ref{lem:mmw2} implies $\mmw(G) \leq 2$. We leave it to the reader to check that the proper minors of the graphs in $\mathcal{O}_6$ have \MMW at most 2.

If $H$ is minimally 3-connected, then all the graphs obtainable from $H$ by good-subdividing two adjacent edges are in $\mathcal{O}_6$; $\mathcal{O}_6^1$ for $K_{3,3}$, $\mathcal{O}_6^4$ for the prism and $\mathcal{O}_6^6$ for the wheel $W_6$.

Suppose that $H$ is not minimally 3-connected and $G \in \mathcal{O}_2$. Let $e$ be an edge of $H$ such that $H-e$ is still 3-connected. If $e$ is not subdivided in $G$, then by the above discussion $G-e$ has two adjacent good-sides and $\mmw(G-e) \geq 3$, a contradiction. Thus $e$ must be good-subdivided in $G$ and $H$ has at most two edges whose removal does not affect its 3-connectivity. Note that if $H$ has two such edges, then they should be also adjacent.

If $H$ is $K_{3,3}$ plus an edge, then the additional edge must be subdivided and we need another adjacent edge to subdivide. But independently of this choice the resulting graph is isomorphic to the graph in $\mathcal{O}_6^2$. There is a unique way of adding two adjacent edges to $K_{3,3}$ and the graphs in $\mathcal{O}_6^3$ is the result of subdividing both.

If $H$ is the prism plus an edge, then we have three non-isomorphic choices of another adjacent edge to subdivide. They are in $\mathcal{O}_6^5$. There is again a unique way of adding two adjacent edges to the prism but it contains a graph in $\mathcal{O}_6^6$ as a minor.

There is a unique (up to isomorphism) way to add an edge to $W_6$ but it already has three edges that are removable while maintaining 3-connecitivity. Thus the list is complete.
\end{proof}

By Lemma~\ref{lem:O3}, a graph is in the obstruction set and $3$-connected if and only if it is in $\mathcal{O}_3$.
If $G$ is in the obstruction set but not $3$-connected, 
then it should be obtained from a $3$-connected graph on $4$, $5$, or $6$ vertices by Lemma~\ref{lem:3connbase}.
Lemmas~\ref{lem:o4},~\ref{lem:o5}, and~\ref{lem:o6} show that $G\in \mathcal{O}_4\cup\mathcal{O}_5\cup \mathcal{O}_6$. 
Therefore, the following theorem holds: 

\begin{theorem}\label{thm:2}
Let $\mathcal{O}=\mathcal{O}_3\cup\mathcal{O}_4\cup\mathcal{O}_5\cup\mathcal{O}_6$ be the set of $45$ graphs in Figures~\ref{fig:3conn},\ref{fig:o4},\ref{fig:o5},\ref{fig:o6}. A graph $G$ has maximum matching width at most 2 if and only if $G$ has no minor isomorphic to a graph in $\mathcal{O}$.
\end{theorem}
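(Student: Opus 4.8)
The plan is to derive the theorem by assembling the lemmas already proved; there is essentially no new combinatorics, only bookkeeping. I would prove the two implications separately, each by contraposition, and I would organize the ``if'' direction around a single minor-minimal counterexample.

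For the ``only if'' direction, suppose $G$ has a minor isomorphic to some $H\in\mathcal{O}$. By Lemma~\ref{lem:O3} every graph in $\mathcal{O}_3$ has \MMW at least $3$, and by Lemma~\ref{lem:mmw3} every graph in $\mathcal{O}_4\cup\mathcal{O}_5\cup\mathcal{O}_6$ has \MMW at least $3$; hence $\mmw(H)\ge 3$. Since $M_2=\{G:\mmw(G)\le 2\}$ is minor-closed by Corollary~\ref{coro:closed}, this forces $\mmw(G)\ge\mmw(H)\ge 3$. Equivalently, if $\mmw(G)\le 2$ then $G$ has no minor isomorphic to a graph in $\mathcal{O}$.

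For the ``if'' direction I would argue contrapositively: assuming $\mmw(G)\ge 3$, I want to exhibit a minor of $G$ in $\mathcal{O}$. First I would pick a minor $G'$ of $G$ that is minor-minimal with respect to having \MMW at least $3$. The first substantive step is to observe that $G'$ must be $2$-connected: otherwise $G'$ has at least two blocks, and since $\mmw(G')$ is the maximum of $\mmw$ over the blocks of $G'$ (a $K_2$-block contributing only $1$), some block would be a proper subgraph, hence a proper minor, of $G'$ with \MMW at least $3$, contradicting minimality. Then I split into two cases. If $G'$ is $3$-connected, then $G'$ is minor-minimal with \MMW at least $3$ and Lemma~\ref{lem:O3} gives $G'\in\mathcal{O}_3$. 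If $G'$ is $2$-connected but not $3$-connected, then $G'\in\mathcal{O}_2$ by definition, and Lemma~\ref{lem:3connbase} tells us $G'$ is obtained from a $3$-connected graph on $4$, $5$, or $6$ vertices by good-subdividing some of its edges; feeding this into the ``if and only if'' statements of Lemmas~\ref{lem:o4},~\ref{lem:o5}, and~\ref{lem:o6}, which classify exactly which such good-subdivisions lie in $\mathcal{O}_2$, forces $G'\in\mathcal{O}_4\cup\mathcal{O}_5\cup\mathcal{O}_6$. In every case $G'\in\mathcal{O}$, so $G$ has a minor isomorphic to a graph in $\mathcal{O}$, and the theorem follows.

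I do not expect a genuine obstacle here, since all the hard work has been pushed into the preceding lemmas; the only points needing care are the reduction to the $2$-connected case and the verification that $\mathcal{O}_3\cup\mathcal{O}_4\cup\mathcal{O}_5\cup\mathcal{O}_6$ really does exhaust the minor-minimal obstructions. The latter rests on Lemma~\ref{lem:3connbase} bounding the $3$-connected ``core'' by $6$ vertices, so that only finitely many cases arise, and on Lemmas~\ref{lem:o4}--\ref{lem:o6} being exhaustive over those cases. The one cosmetic thing I would double-check at the end is that the advertised count of graphs in $\mathcal{O}$ matches the graphs drawn in Figures~\ref{fig:3conn},~\ref{fig:o4},~\ref{fig:o5}, and~\ref{fig:o6}.
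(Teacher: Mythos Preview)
Your proposal is correct and follows essentially the same approach as the paper: the paper's proof is just the terse paragraph immediately preceding the theorem, which invokes Lemma~\ref{lem:O3} for the $3$-connected case and Lemma~\ref{lem:3connbase} together with Lemmas~\ref{lem:o4}--\ref{lem:o6} for the non-$3$-connected case. You are simply more explicit about the ``only if'' direction and the reduction to $2$-connected graphs (which the paper handles in the opening remarks of Section~\ref{sec:mmw2}), and your closing note about checking the graph count is apt, since the paper itself is inconsistent between $42$ and $45$.
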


\section{$k \times k$-grid}\label{sec:grid}


The \emph{$k\times k$-grid}, denoted by $G_k$, is the graph with a vertex set $V(G_k)=\{(i,j):1\le i,j \le k\}$ and an edge set $E(G_k)=\{(i,j)(i',j'):\abs{i-i'}+\abs{j-j'}=1 \}$. In this section, we show $\mmw(G_k) = k$ for $k \geq 2$. 

Vatshelle~\cite{Vatshelle2012} showed the following inequality. Recall that $\rw(G)$ and $\brw(G)$ respectively denotes the rank-width and the branch-width of $G$.
\begin{theorem}[\cite{Vatshelle2012}]\label{thm:vatshelle}
If $G$ is a graph, then 
\[
\rw(G)\le\mmw(G)\le\max(\brw(G),1).
\]
\end{theorem}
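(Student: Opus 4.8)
The plan is to prove the two inequalities separately, exploiting that both $\rw(G)$ and $\mmw(G)$ are $f$-widths over the \emph{same} ground set $V(G)$, whereas $\brw(G)$ is an $f$-width over $E(G)$.

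For the lower bound $\rw(G)\le\mmw(G)$ I would fix an arbitrary branch-decomposition $(T,\mathcal{L})$ of $V(G)$ and compare the two cut functions on each induced partition $(A,V(G)\setminus A)$. Let $M_A$ be the bipartite adjacency matrix over $\mathbb{F}_2$ between $A$ and $V(G)\setminus A$; then $r(A)=\mathrm{rank}_{\mathbb{F}_2}(M_A)$, while $\mm(A)$ is the term rank of $M_A$, i.e.\ the maximum number of $1$-entries in distinct rows and columns. The elementary fact I would use is that the rank over any field never exceeds the term rank: a nonsingular $k\times k$ submatrix has nonzero determinant, so the Leibniz expansion $\det=\sum_\sigma\prod_i (M_A)_{i,\sigma(i)}$ has a nonzero term, namely $k$ ones in distinct rows and columns, i.e.\ a matching of size $k$. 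Hence $r(A)\le\mm(A)$ for every $A$, so the $r$-width of $(T,\mathcal{L})$ is at most its $\mm$-width; minimizing over all decompositions gives $\rw(G)\le\mmw(G)$.

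For the upper bound $\mmw(G)\le\max(\brw(G),1)$ I would split on the value of $\brw(G)$. If $\brw(G)\le 1$, then $G$ has no $P_4$-minor (the forbidden minors for branchwidth $\le 1$ being $K_3$ and $P_4$~\cite{RS1991}), hence no $C_4$-minor, and the proposition of~\cite{STpersonal} gives $\mmw(G)\le 1=\max(\brw(G),1)$. If $\brw(G)\ge 2$, set $w=\brw(G)$ and take an optimal branch-decomposition $(T,\lambda)$ of $E(G)$ of width $w$. I would build a tree-representation of $G$ and invoke Theorem~\ref{thm:newchar}: for each vertex $v$ of positive degree let $T_v$ be the minimal subtree of $T$ spanning the leaves $\{\lambda^{-1}(e):v\in e\}$. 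Adjacent vertices $u,v$ share the leaf $\lambda^{-1}(uv)$, so property~(1) holds; and an edge $g$ of $T$, inducing an edge-partition $(A,E(G)\setminus A)$, is crossed by $T_v$ precisely when $v$ is incident to edges on both sides, that is, exactly for the boundary vertices counted by $br(A)\le w$, so property~(2) holds with width $\le w$.

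The main obstacle is that this construction can produce \emph{trivial} subtrees, which Theorem~\ref{thm:newchar} forbids: a vertex $v$ of degree $1$ yields a single-leaf $T_v$, and an isolated vertex yields none at all. I would repair this by enlarging each degree-$1$ subtree to contain the unique pendant edge of $T$ at its leaf, and by appending a fresh pendant edge (subdividing an edge of $T$ to stay subcubic) for each isolated vertex. A pendant edge of $T$ separates a single leaf $\lambda^{-1}(e)$ from the rest, so its boundary count is at most $1$; the repair therefore raises its load to at most $2\le w$ and leaves all interior edges untouched, so the width stays $\le w$. This is exactly where $\brw(G)\ge 2$ (equivalently the $\max(\cdot,1)$) is needed, since for a star the same repair would force width $2$ while its true \MMW is $1$. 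Verifying the boundary-vertex$=$crossing correspondence and that the repairs keep $T$ subcubic with all subtrees nontrivial are the routine points that finish the argument, and Theorem~\ref{thm:newchar} then yields $\mmw(G)\le w=\brw(G)$.
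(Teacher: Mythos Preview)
The paper does not prove this theorem; it is quoted from \cite{Vatshelle2012} and used as a black box in Section~\ref{sec:grid}. There is therefore no in-paper proof to compare against.

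Your argument is essentially correct. The Leibniz-expansion observation gives $r(A)\le\mm(A)$ cutwise and hence $\rw(G)\le\mmw(G)$. For the upper bound, turning an optimal edge branch-decomposition into a tree-representation by setting $T_v$ equal to the span of the leaves labelled by edges incident with $v$, and identifying the subtrees crossing a tree edge with the boundary vertices of the corresponding edge-cut, is exactly right. One imprecision: the sentence ``its boundary count is at most $1$'' is false for an arbitrary pendant edge of $T$ (it can be $2$); it is only true for the specific pendant edges you actually repair, namely those at a leaf $\lambda^{-1}(e)$ where $e$ has a degree-$1$ endpoint in $G$, since that endpoint contributes nothing to $br(\{e\})$. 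With that reading the post-repair load is at most $2\le w$, and the case split on $\brw(G)\ge 2$ is justified. Note finally that you are relying on Theorem~\ref{thm:newchar} and the $C_4$ proposition, both of which postdate \cite{Vatshelle2012}, so Vatshelle's original proof necessarily proceeds differently; within the framework of the present paper, however, your route is sound.
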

It is known that $\brw(G_k)=k$~\cite{RS1991} and $\rw(G_k)=k-1$~\cite{Jelinek2010}.
Hence $\mmw(G_k)$ is either $k-1$ or $k$.
We shall show $\mmw(G_k) > k-1$ by finding a \emph{tangle} of order $k$; see Section \ref{sec:tangle}. We assume $k \geq 2$ throughout this section.

Let $C_i = \{(i,j):1\le j\le k\}$ and $R_j =\{(i,j):1\le i\le k\}$ be the set of vertices on the $i$-th column and the $j$-th row respectively. Recall that for a vertex set $X \subseteq V(G)$, $\mm_G(X)$ denotes the size of a maximum matching in $G[X,V(G) \setminus X]$. 
We omit $G_k$ in $\mm_{G_k}$ and write $\mm(X) = \mm_{G_k}(X)$ in this section. Let $X^c = V(G_k) \setminus X$ for $X \subseteq V(G_k)$.

\begin{lemma}\label{rc}
If $X\subseteq V(G_k)$ and $\mm(X)< k$, then $R_i\subseteq X$ for some $i$ if and only if $C_j\subseteq X$ for some $j$.
\end{lemma}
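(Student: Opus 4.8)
The plan is to argue by contradiction in each direction, exploiting the grid structure to show that a full row forces a full column and vice versa whenever the matching is small. By the symmetry of rows and columns in $G_k$ (the map $(i,j)\mapsto(j,i)$ is an automorphism), it suffices to prove one implication: if $\mm(X)<k$ and $R_i\subseteq X$ for some $i$, then $C_j\subseteq X$ for some $j$.

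So suppose $R_i\subseteq X$ but no column is entirely contained in $X$; I will build a matching of size $k$ in $G_k[X,X^c]$, contradicting $\mm(X)<k$. The key observation is that, because no $C_j$ is contained in $X$, every column $C_j$ contains at least one vertex of $X^c$. On the other hand, the row $R_i$ lies entirely in $X$. For each column $j\in\{1,\dots,k\}$, look at the vertex $(i,j)\in R_i\subseteq X$ and walk down (or up) the column until the first vertex not in $X$; call it $(i_j,j)$ with $(i_j\pm\text{something},j)$ the last vertex in $X$ along the way. This gives, for each column $j$, an edge $e_j$ of $G_k$ joining a vertex of $X$ to a vertex of $X^c$, and $e_j$ is a vertical edge inside column $C_j$. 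Since the columns are pairwise disjoint, the edges $e_1,\dots,e_k$ are pairwise vertex-disjoint, hence form a matching of size $k$ in $G_k[X,X^c]$. This yields $\mm(X)\ge k$, the desired contradiction.

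The main obstacle is a degenerate case in the walk: one must make sure that such a first vertex of $X^c$ actually exists in each column, which is exactly guaranteed by the hypothesis that no column is fully in $X$, and one must make sure the walk starts at a vertex of $X$, which is guaranteed by $R_i\subseteq X$ since $(i,j)\in R_i$. So for every $j$ the transition edge $e_j$ is well-defined. A second small point: I should make sure $e_j$ is genuinely an edge between $X$ and $X^c$ and not, say, a self-reference — but since we stop at the \emph{first} vertex outside $X$ encountered from $(i,j)$, the immediately preceding vertex along the column is in $X$, so $e_j$ connects an $X$-vertex to an $X^c$-vertex. After completing this direction, the reverse implication follows by applying the transpose automorphism, so no separate argument is needed.

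Thus the proof reduces to: (1) invoke the row/column automorphism to cut the work in half; (2) assume $R_i\subseteq X$ and, for contradiction, that no column is contained in $X$; (3) for each column extract a vertical $X$-to-$X^c$ edge by walking from the guaranteed $X$-vertex $(i,j)$ to the guaranteed $X^c$-vertex in that column; (4) observe disjointness across columns gives a matching of size $k$; (5) conclude $\mm(X)\ge k$, contradicting the hypothesis, and hence some column lies in $X$; (6) the converse is the transpose. I expect step (3) — correctly identifying the transition edge and verifying it lies between the two sides of the cut — to be the only place needing care, and it is genuinely routine once the hypotheses are unpacked.
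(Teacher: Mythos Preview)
Your proposal is correct and follows essentially the same argument as the paper: assume a full row lies in $X$ but no column does, extract from each column a vertical edge crossing from $X$ to $X^c$, and note these $k$ column-edges are independent, contradicting $\mm(X)<k$; the converse is by symmetry. The paper is terser (it simply observes each $G[C_j]$ contains a crossing edge without spelling out the walk), but the content is the same.
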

\begin{proof}
Suppose that $R_i\subseteq X$ for some $i$. Then each $C_j$ intersects with $X$.
If $C_j\nsubseteq X$ for every $j$, each $G[C_j]$ contains an edge with one end in $X$ and the other end in $X^c$. 
Since these edges form a matching of size $k$, we have $\mm(X)\ge k$ which is a contradiction. 
Thus $C_j\subseteq X$ for some $j$.
The converse follows from the symmetry.
\end{proof}

For $X\subseteq V(G_k)$, we say that $X$ is \emph{small} if $\mm(X)<k$ and $R_i \not\subseteq X$ for all $i=1,2,\ldots,k$. Note that, by Lemma \ref{rc}, $C_j \not\subseteq X$ for all $j=1,2,\ldots,k$ if $X$ is small.

\begin{lemma}\label{t1}
Let $X\subseteq V(G_k)$. If $\mm(X)<k$, then one of $X$ and $X^c$ is small.
\end{lemma}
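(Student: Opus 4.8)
The plan is to argue by contradiction: assume that \emph{neither} $X$ nor $X^c$ is small, and derive a contradiction. The first observation I would record is that the cut function $\mm$ is symmetric, so $\mm(X^c) = \mm(X) < k$. This matters because it means $X^c$ also satisfies the hypothesis of Lemma~\ref{rc}, so that lemma is available for $X^c$ just as for $X$.

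Next I would unwind the definition of \emph{small}. Since $\mm(X) < k$, the only way $X$ can fail to be small is that $R_a \subseteq X$ for some index $a$. Likewise, since $\mm(X^c) < k$, the failure of $X^c$ to be small forces $R_b \subseteq X^c$ for some index $b$; note $a \neq b$ automatically, since $R_a \subseteq X$ and $R_b \subseteq X^c$ are disjoint. So under our assumption both a full row sits inside $X$ and a full row sits inside $X^c$.

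Now I would invoke Lemma~\ref{rc} applied to $X^c$: because $R_b \subseteq X^c$, there is a column $C_c \subseteq X^c$. But $R_a \subseteq X$ says the vertex $(c,a)$, which lies on row $a$, belongs to $X$, while $C_c \subseteq X^c$ says the same vertex, which lies on column $c$, belongs to $X^c$ — a contradiction. Hence at least one of $X$ and $X^c$ is small, as claimed.

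I do not expect a genuine obstacle here; the argument is a short deduction. The only points that need care are (i) using the symmetry of $\mm$ so that Lemma~\ref{rc} can be applied to $X^c$ and not just to $X$, and (ii) correctly reading off from the definition that ``not small'' together with $\mm(\,\cdot\,) < k$ forces an \emph{entire} row inside the set (rather than merely some weaker intersection condition), which is exactly what lets Lemma~\ref{rc} produce the conflicting column.
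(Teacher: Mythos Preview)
Your proof is correct. Both you and the paper argue by contradiction and begin identically: assuming neither $X$ nor $X^c$ is small, one obtains a row $R_a\subseteq X$ and a row $R_b\subseteq X^c$. The endgame differs. The paper observes directly that every column $C_j$ then meets both $X$ and $X^c$, so the path $G_k[C_j]$ contains an edge of $E[X,X^c]$; collecting one such edge per column yields a matching of size $k$, contradicting $\mm(X)<k$. You instead invoke Lemma~\ref{rc} on $X^c$ (legitimate since $\mm$ is symmetric) to produce a full column $C_c\subseteq X^c$, and then the single vertex $(c,a)\in R_a\cap C_c$ lies in both $X$ and $X^c$. Your route is slightly slicker in that it recycles Lemma~\ref{rc} rather than repeating its matching-construction idea; the paper's route is self-contained and avoids the extra appeal to symmetry. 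Either way the argument is a two-line deduction.
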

\begin{proof}
Suppose that neither $X$ nor $X^c$ is small. Then we can choose $i_1$, $i_2$ with $1\le i_1,i_2\le k$ such that $R_{i_1}\subseteq X$ and $R_{i_2}\subseteq X^c$. Now we may choose an edge from each column of $G_k$ with endpoints one in $X$ and the other in $X^c$. Since these edges form a matching of size $k$, we have $\mm(X)\ge k$, a contradiction.
\end{proof}

\begin{lemma}\label{norc}
If $X\subseteq V(G_k)$ is small, then there exist $i,j$ such that $R_i \cap X = C_j \cap X = \emptyset$.
\end{lemma}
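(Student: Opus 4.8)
The plan is to establish the two halves of the conclusion separately, each time exploiting the same mechanism that drives Lemmas~\ref{rc} and~\ref{t1}: a row or a column of $G_k$ induces a path, hence is connected, so if it meets both $X$ and $X^c$ then it contains an edge of $G_k[X,X^c]$; and edges extracted from distinct rows (or distinct columns) are automatically vertex-disjoint, so they assemble into a matching whose size equals the number of lines used.

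First I would produce a row disjoint from $X$. Suppose for contradiction that $R_i\cap X\neq\emptyset$ for every $i$. Since $X$ is small we also have $R_i\not\subseteq X$ for every $i$, so each row meets both $X$ and $X^c$; choosing one $X$–$X^c$ edge from each $R_i$ gives a matching of size $k$ in $G_k[X,X^c]$, contradicting $\mm(X)<k$. Hence some $i$ satisfies $R_i\cap X=\emptyset$. The column statement is proved by the mirror argument: because $X$ is small, no $R_i$ is contained in $X$, so by Lemma~\ref{rc} no $C_j$ is contained in $X$ either; if in addition every $C_j$ met $X$, then each $C_j$ would meet both $X$ and $X^c$ and we would again extract a matching of size $k$, a contradiction. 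Thus some $j$ satisfies $C_j\cap X=\emptyset$.

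Since these two existence statements are independent, putting them together yields the desired $i$ and $j$ simultaneously, completing the proof. I do not expect a genuine obstacle here; the one place needing care is the appeal to Lemma~\ref{rc} to rule out a column entirely contained in $X$, which is exactly what guarantees that each column under consideration actually carries an $X$–$X^c$ edge rather than lying wholly inside $X$.
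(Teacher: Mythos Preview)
Your argument is correct and matches the paper's proof essentially line for line: assume every row meets $X$, use smallness to get that every row also meets $X^c$, pick one crossing edge per row to build a matching of size $k$, contradicting $\mm(X)<k$; then repeat for columns. The paper abbreviates the column half with a ``Likewise'' (having already noted right after the definition of \emph{small} that Lemma~\ref{rc} forces $C_j\not\subseteq X$ for all $j$), whereas you spell that step out explicitly, but the content is identical.
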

\begin{proof}
Suppose that $|R_i\cap X |>0$ for all $i$. Since $X$ is small, $R_i \cap X^c \not = \emptyset$. 
Thus, $G[R_i]$ contains an edge between $X$ and $X^c$ for every $i$. 
These edges show that $\mm(X)\ge k$, a contradiction. Likewise, $C_j \cap X = \emptyset$ for some $j$.
\end{proof}


\begin{lemma}\label{main}
If $X_1\cup X_2\cup X_3=V(G_k)$, 
then one of $X_1$, $X_2$, and $X_3$ is not small.
\end{lemma}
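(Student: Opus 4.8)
The plan is to argue by contradiction: assume $X_1$, $X_2$, $X_3$ are all small and cover $V(G_k)$, and derive a matching of size $k$ in one of the bipartite cut-graphs $G_k[X_i, X_i^c]$. The starting point is Lemma~\ref{norc}: for each $i$, since $X_i$ is small, there is a row $R_{a_i}$ and a column $C_{b_i}$ that are entirely contained in $X_i^c$, i.e. disjoint from $X_i$. So I would first record, for each of the three small sets, a ``missing row'' and a ``missing column.'' The rough idea is that if $X_1$ misses row $R_{a_1}$, then $R_{a_1} \subseteq X_2 \cup X_3$; combining the missing rows and columns of all three sets should force one set to actually contain a whole row or column (contradicting smallness via Lemma~\ref{rc}), or else force enough alternation along some row or column to build a size-$k$ matching between some $X_i$ and its complement.

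Concretely, here is the approach I would push. Consider the three missing columns $C_{b_1}, C_{b_2}, C_{b_3}$ (with $C_{b_i} \cap X_i = \emptyset$). Each $C_{b_i} \subseteq X_j \cup X_\ell$ where $\{j,\ell\} = \{1,2,3\}\setminus\{i\}$. Now look at a missing row $R_{a_1}$ of $X_1$: it is covered by $X_2 \cup X_3$. Walk along $R_{a_1}$ from column $1$ to column $k$; each vertex $(t, a_1)$ lies in $X_2$ or $X_3$ (or both). If this row were entirely inside $X_2$, then $X_2$ contains a full row, contradicting that $X_2$ is small; likewise it is not entirely inside $X_3$. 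Hence along $R_{a_1}$ there is at least one consecutive pair $(t,a_1),(t{+}1,a_1)$ with one vertex in $X_2 \setminus X_3$ and the other in $X_3$ — but that alone is not enough; I need one such ``boundary edge'' in {\it every} column, or in every row, to get a matching of size $k$. The cleaner route: use that $X_1$ misses an entire column $C_{b_1}$ as well, so $C_{b_1} \subseteq X_2 \cup X_3$, and now do a simultaneous analysis of the $2$-coloring of the cross $R_{a_1} \cup C_{b_1}$ (all of it lying in $X_2 \cup X_3$) together with the missing rows/columns of $X_2$ and $X_3$. The key combinatorial claim I expect to need is: three small sets cannot cover $V(G_k)$ because the ``missing crosses'' $R_{a_i} \cup C_{b_i}$ cannot be pairwise-compatibly placed — whichever way the three crosses sit, some row or column of the grid avoids two of the three sets, hence is contained in the third, contradicting smallness of that third set.

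The step I expect to be the main obstacle is turning the ``some boundary edge exists in this row/column'' observation into ``$k$ vertex-disjoint boundary edges exist'' — i.e. actually producing the size-$k$ matching rather than just a single cross-edge. The honest way around this is probably to avoid building the matching directly and instead reduce to the already-proven lemmas: show that the covering hypothesis forces one $X_i$ to contain some $R_t$ entirely (then Lemma~\ref{rc} says it also contains some column, and one checks $\mm(X_i)\ge k$ directly, or one simply notes such an $X_i$ is not small by definition). So the real content is the pigeonhole/geometry argument on where three missing rows $\{a_1,a_2,a_3\}$ and three missing columns $\{b_1,b_2,b_3\}$ can lie: if, say, $a_1, a_2, a_3$ are not all distinct or if some row index $t \notin \{a_1,a_2,a_3\}$ — wait, every row avoids at least... hmm, I would instead argue: the row $R_{a_1}$ avoids $X_1$, so it is covered by $X_2, X_3$; if additionally $a_1 = a_2$ then $R_{a_1}$ avoids both $X_1$ and $X_2$, so $R_{a_1} \subseteq X_3$, contradiction. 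Hence $a_1, a_2, a_3$ are pairwise distinct, and similarly $b_1, b_2, b_3$ are pairwise distinct. Then consider the vertex $(b_3, a_3)$: it is not in $X_3$ (wrong column) and... this needs care, but the distinctness of the three missing rows and three missing columns is exactly the leverage, and I would finish by locating a single grid vertex forced to lie outside all three $X_i$, contradicting that they cover $V(G_k)$. That last vertex-hunting step, making it airtight, is where I'd spend the most effort.
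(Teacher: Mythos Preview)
Your distinctness observation is correct and clean: if two of the sets, say $X_1$ and $X_2$, miss the same row $R_a$, then $R_a\subseteq X_3$ and $X_3$ is not small. So the three missing rows $a_1,a_2,a_3$ are pairwise distinct, and likewise the three missing columns.

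The genuine gap is the final ``vertex-hunting'' step. From distinctness alone you \emph{cannot} locate a vertex lying outside all three sets. Take $k\ge 3$ and let $X_i$ be the complement of the cross $R_i\cup C_i$ for $i=1,2,3$. Then each $X_i$ misses row $i$ and column $i$, the missing rows and columns are pairwise distinct, and a two-line case check shows $X_1\cup X_2\cup X_3=V(G_k)$: no grid vertex is covered by all three crosses. (These particular $X_i$ are of course not small --- one finds $\mm(X_i)\ge k$ --- so this is not a counterexample to the lemma. The point is that after invoking Lemma~\ref{norc} your argument never again uses the hypothesis $\mm(X_i)<k$; it is therefore attempting to prove the stronger, and false, statement that three sets each missing a distinct row and a distinct column cannot cover the grid.)

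In short, Lemma~\ref{norc} is too weak on its own; somewhere the matching bound $\mm(X_i)<k$ must be used directly, not merely its row/column corollary. The paper does this by induction on $k$: it strips off $R_k\cup C_k$, applies the inductive hypothesis to $G_{k-1}$, and then --- here is where $\mm<k$ enters --- either lifts a size-$(k{-}1)$ matching in $G_{k-1}$ to a size-$k$ matching in $G_k$ by adding one more edge from the boundary strip, or falls into a residual case handled by a counting argument showing $\mm(X_1)+\mm(X_2)\ge 2k$ column by column. Your first instinct, to build the size-$k$ matching explicitly, was closer to what is actually needed than the pigeonhole route you switched to.
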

\begin{proof}
We prove by induction on $k$. The lemma is trivial when $k=2$. Assume that $k>2$ and the lemma is true for $k-1$. To prove by contradiction, let us suppose that all of $X_1$, $X_2$, and $X_3$ are small. Note that each row or column intersects at least two of $X_1$, $X_2$ and $X_3$.

Firstly we suppose that $R_k \cup C_k$ intersects $X_t$ for all $t \in \{1,2,3\}$. We consider the $(k-1) \times (k-1)$-grid $G_{k-1} = G_k \setminus (R_k \cup C_k)$ with sets $X_t' = X_t \setminus (R_k \cup C_k)$ for each $t \in \{1,2,3\}$ so that $X_1' \cup X_2' \cup X_3' = V(G_{k-1})$. By the induction hypothesis, we may assume that $X_1'$ is not small in $G_{k-1}$.
That is, $\mm_{G_{k-1}}(X_1') \geq k-1$ or $X_1'$ contains a row of $G_{k-1}$.
If $\mm_{G_{k-1}}(X_1') \geq k-1$, then $G_{k-1}$ has a matching of size $k-1$ between $X_1'$ and $V(G_{k-1}) \setminus X_1'$. Since $G_k$ has an edge in $G_k[R_k \cup C_k]$ with one end in $X_1$ and the other in $X_1^c$, we obtain a matching of size $k$ in $G_k[X_1, X_1^c]$ showing that $\mm(X_1) \geq k$ and $X_1$ is not small. Hence we may assume that $\mm_{G_{k-1}}(X_1') < k-1$ and $X_1'$ contains a row $R'$ of $G_{k-1}$. Since we assumed $X_1$ to be small, one of the columns of $G_k$ does not intersect $X_1$ by Lemma \ref{norc} but it must be $C_k$; all other columns intersect with $R'$. On the other hand, by Lemma \ref{rc}, $X_1'$ also contains a column of $G_{k-1}$ and $R_k$ does not intersect $X_1$. Thus $(R_k \cup C_k) \cap X_1 = \emptyset$, a contradiction to our assumption that $R_k \cup C_k$ intersects all of $X_1, X_2$ and $X_3$.

Therefore we may assume that for every choice $i,j \in \{1,k\}$, $R_i \cup C_j$ does not intersect all $X_t$ at the same time. Since each row and column intersects at least two of $X_1, X_2$ and $X_3$, if $R_1\cup R_k$ meets all $X_t$, then either $R_1\cup C_k$ or $R_k\cup C_k$ meets all $X_t$ so that we assume both $R_1$ and $R_k$ intersects $X_1$ and $X_2$ but not $X_3$. It follows also that both $C_1$ and $C_k$ intersects $X_1$ and $X_2$ but not $X_3$.

We shall show $\mm(X_1) + \mm(X_2) \geq 2k$ by proving that each column of $G_k$ contains either two independent edges from one of $E[X_1, X_1^c]$ and $E[X_2, X_2^c]$, or one edge from each set. Those edges form two matchings in $G[X_1, X_1^c]$ and $G[X_2, X_2^c]$ respectively whose sizes sum up to at least $2k$. Thus we get $\mm(X_1) \geq k$ or $\mm(X_2) \geq k$ and one of $X_1$ and $X_2$ is not small.

If a column has an edge with one end in $X_1\setminus X_2$ and the other in $X_2\setminus X_1$ then we are done. Thus $C_1$ and $C_k$ are fine. 
If all columns are as such then we are done. Otherwise, there is a column $C_i$ such that $C_i \cap X_2 \subseteq C_i \cap X_1$. Since $C_i \not\subset X_1$, we have $|C_i \cap (X_3 \setminus X_1)| > 0$. If $|C_i \cap (X_3 \setminus X_1)| \geq 2$ then $C_i$ has two independent edges in $E[X_1, X_1^c]$. Thus we assume $|C_i \cap (X_3 \setminus X_1)| = 1$, that is, $|C_i \cap X_1| = k-1$.
By Lemma~\ref{norc} we choose a column $C_j$ not intersecting with $X_1$, and between $C_i$ and $C_j$ we can find $k-1$ independent row-edges in $E[X_1, X_1^c]$. Since $C_1$ and $C_k$ are not in this area, we may choose an edge from $G[C_1] \cap G[X_1, X_1^c]$ and $G[X_1, X_1^c]$ has a matching of size $k$, showing that $\mm(X_1) \geq k$ and $X_1$ is not small. This final contradiction completes the proof.
\end{proof}


\begin{lemma}\label{lem:gridtangle}
Let $\mathcal{T}$ be the set of all small subsets of $V(G_k)$.
The set $\mathcal{T}$ is a tangle in $G_k$ of order $k$.
\end{lemma}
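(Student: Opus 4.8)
The plan is to verify the three tangle axioms (T1), (T2), (T3) directly for the collection $\mathcal{T}$ of all small subsets of $V(G_k)$, relying on the structural lemmas already established. Axiom (T1) asks that whenever $\mm(S) \le k-1$, one of $S$ and $S^c$ lies in $\mathcal{T}$; this is exactly the statement of Lemma~\ref{t1}. Axiom (T2) asks that no three members of $\mathcal{T}$ have union $V(G_k)$; this is exactly Lemma~\ref{main}, since a small set is by definition small, so if $X_1 \cup X_2 \cup X_3 = V(G_k)$ then one of them fails to be small and hence is not in $\mathcal{T}$. So the content of the lemma is almost entirely bundled into the earlier results, and the proof is essentially a bookkeeping argument.

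The one axiom needing a separate (short) check is (T3): for every vertex $v=(i,j)$ of $G_k$, the set $V(G_k) \setminus \{v\}$ is not small. Here I would argue that $X = V(G_k)\setminus\{v\}$ trivially contains many full rows (any row not equal to $R_j$ is entirely inside $X$ since $G_k$ has $k \ge 2$ rows, so at least one such row exists), so $X$ violates the defining condition $R_i \not\subseteq X$ for all $i$ and is therefore not small. One should also note in passing that $\mathcal{T}$ is nonempty and consists of sets of $\mm$-value at most $k-1$ (immediate from the definition of small), so that $\mathcal{T}$ is a legitimate candidate tangle and the order $k$ is meaningful; this requires no work.

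I expect no real obstacle: all the genuine combinatorial difficulty — the induction on $k$ with the case analysis on which of $X_1,X_2,X_3$ the boundary row $R_k$ and column $C_k$ meet, and the column-by-column matching construction giving $\mm(X_1)+\mm(X_2)\ge 2k$ — has already been carried out in Lemma~\ref{main}, and Lemmas~\ref{rc} and~\ref{t1} handle the structural facts behind (T1). The only thing to be careful about is matching the paper's precise definition of an $f$-tangle of order $k+1$ (from Section~\ref{sec:tangle}) against what we are proving: since we want a tangle of order $k$, we are checking (T1) for all $S$ with $\mm(S) \le k-1$, (T2) for triples in $\mathcal{T}$, and (T3) for singletons — exactly as the three lemmas supply. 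After assembling these, the immediate corollary (combining Lemma~\ref{lem:gridtangle} with Theorem~\ref{thm:tangle} and Theorem~\ref{thm:vatshelle}) is that $\mmw(G_k) \ge k$, and together with $\mmw(G_k) \le \brw(G_k) = k$ this yields $\mmw(G_k) = k$ for $k \ge 2$, proving Theorem~\ref{thm:gridthm}.
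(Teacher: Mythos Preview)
Your proposal is correct and matches the paper's proof essentially line for line: (T1) is Lemma~\ref{t1}, (T2) is Lemma~\ref{main}, and (T3) follows since $V(G_k)\setminus\{x\}$ contains a full row and hence is not small. The additional commentary about deducing Theorem~\ref{thm:gridthm} is also exactly how the paper proceeds.
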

\begin{proof}
The first and the second axioms follow from Lemma~\ref{t1} and Lemma~\ref{main} respectively. For each $x \in V(G_k)$, the set $V(G_k)\setminus\{x\}$ contains a row and thus not in $\mathcal{T}$.
\end{proof}

By Theorem~\ref{thm:tangle}, Lemma~\ref{lem:gridtangle} implies $\mmw(G_k)> k-1$.
Since the branchwidth of $G_k$ is $k$, by Theorem~\ref{thm:vatshelle}, 
$\mmw(G_k)$ is at most $k$.

\begin{theorem}\label{thm:gridthm}
The $k\times k$-grid has maximum matching width $k$ for $k\ge 2$.
\end{theorem}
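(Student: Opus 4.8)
The plan is to combine the two inequalities that have already been established in the excerpt. The upper bound $\mmw(G_k) \le k$ is immediate: the branchwidth of $G_k$ is $k$ by Robertson--Seymour~\cite{RS1991}, and Theorem~\ref{thm:vatshelle} gives $\mmw(G_k) \le \max(\brw(G_k),1) = \max(k,1) = k$ since $k \ge 2$. So the entire substance of the theorem is the matching lower bound $\mmw(G_k) \ge k$.

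For the lower bound I would invoke Lemma~\ref{lem:gridtangle}, which asserts that the collection $\mathcal{T}$ of all \emph{small} subsets of $V(G_k)$ forms an $\mm$-tangle of order $k$ in $G_k$. By Theorem~\ref{thm:tangle} applied to the symmetric submodular function $\mm = \mm_{G_k}$ on the ground set $V(G_k)$, the existence of an $\mm$-tangle of order $k$ is equivalent to the $\mm$-width of $V(G_k)$ being strictly greater than $k-1$, i.e.\ $\mmw(G_k) > k-1$. Since $\mmw(G_k)$ is an integer, this yields $\mmw(G_k) \ge k$.

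Putting the two bounds together gives $k \le \mmw(G_k) \le k$, hence $\mmw(G_k) = k$ for every $k \ge 2$, which is exactly the statement. Strictly speaking there is nothing left to prove here beyond citing Lemma~\ref{lem:gridtangle}, Theorem~\ref{thm:tangle}, and Theorem~\ref{thm:vatshelle}; the real work has been front-loaded into the four lemmas (Lemmas~\ref{t1}, \ref{norc}, \ref{main}, \ref{lem:gridtangle}) that verify the three tangle axioms. The only mild subtlety worth a sentence is to note explicitly that $\max(\brw(G_k),1)=k$ requires $k\ge 2$ (for $k=1$ the grid is a single edge $K_2$ with $\mmw=1$ but $\brw = 0$), which is why the hypothesis $k \ge 2$ appears in the statement.

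I do not anticipate a genuine obstacle in this final assembly step; if there is any delicate point, it is bookkeeping, namely making sure the tangle order in Theorem~\ref{thm:tangle} (``order $k+1$ iff $f$-width $> k$'') is lined up correctly with ``order $k$'' in Lemma~\ref{lem:gridtangle}, so that we conclude $f$-width $> k-1$ and not $> k$. Once that indexing is checked, the proof is a two-line citation.

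\begin{proof}
By Theorem~\ref{thm:tangle} applied to the symmetric submodular function $\mm = \mm_{G_k}$ on $V(G_k)$, the existence of an $\mm$-tangle of order $k$ in $G_k$ implies that the $\mm$-width of $V(G_k)$ is larger than $k-1$; that is, $\mmw(G_k) > k-1$. Such a tangle exists by Lemma~\ref{lem:gridtangle}, so $\mmw(G_k) \ge k$ since $\mmw(G_k)$ is an integer. On the other hand, $\brw(G_k) = k$ by~\cite{RS1991}, and since $k \ge 2$ we have $\max(\brw(G_k),1) = k$, so Theorem~\ref{thm:vatshelle} gives $\mmw(G_k) \le k$. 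Therefore $\mmw(G_k) = k$ for all $k \ge 2$.
\end{proof}
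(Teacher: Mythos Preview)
Your proof is correct and follows exactly the paper's approach: the lower bound comes from Lemma~\ref{lem:gridtangle} together with Theorem~\ref{thm:tangle}, and the upper bound from $\brw(G_k)=k$ together with Theorem~\ref{thm:vatshelle}. One tiny slip in your parenthetical aside: the $1\times 1$-grid is a single vertex, not $K_2$; this is irrelevant to the proof, which is only stated for $k\ge 2$.
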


\section{Acknowledgments}
We thank Jan Arne Telle for introducing the first problem. 
We also thank Sang-il Oum for giving an important idea to compute the maximum matching width of the $k \times k$-grid.

\bibliographystyle{abbrv}
\bibliography{bib-width}

\begin{thebibliography}{10}

\bibitem{Arnborg1990}
S.~Arnborg, A.~Proskurowski, and D.~G. Corneil.
\newblock Forbidden minors characterization of partial 3-trees.
\newblock {\em Discrete Mathematics}, 80(1):1--19, 1990.

\bibitem{Bodlaender2015}
H.~L. Bodlaender, S.~Kratsch, V.~J. Kreuzen, O.~joung Kwon, and S.~Ok.
\newblock Characterizing width two for variants of treewidth.
\newblock {\em Discrete Applied Mathematics}, 2015.

\bibitem{BT1999}
H.~L. Bodlaender and D.~M. Thilikos.
\newblock Graphs with branchwidth at most three.
\newblock {\em Journal of Algorithms}, 32(2):167--194, 1999.

\bibitem{Courcelle1990}
B.~Courcelle.
\newblock The monadic second-order logic of graphs. {I}. recognizable sets of
  finite graphs.
\newblock {\em Information and Computation}, 85(1):12--75, 1990.

\bibitem{Dvorak2012}
Z.~Dvořák, A.~C. Giannopoulou, and D.~M. Thilikos.
\newblock Forbidden graphs for tree-depth.
\newblock {\em European Journal of Combinatorics}, 33(5):969--979, 2012.
\newblock EuroComb '09.

\bibitem{Gavril1974}
F.~Gavril.
\newblock The intersection graphs of subtrees in trees are exactly the chordal
  graphs.
\newblock {\em J. Combinatorial Theory Ser. B}, 16:47--56, 1974.

\bibitem{Jelinek2010}
V.~Jel{\'{\i}}nek.
\newblock The rank-width of the square grid.
\newblock {\em Discrete Appl. Math.}, 158(7):841--850, 2010.

\bibitem{JST15}
J.~Jeong, S.~H. Sæther, and J.~A. Telle.
\newblock {Maximum Matching Width: New Characterizations and a Fast Algorithm
  for Dominating Set}.
\newblock In T.~Husfeldt and I.~Kanj, editors, {\em 10th International
  Symposium on Parameterized and Exact Computation (IPEC 2015)}, volume~43 of
  {\em Leibniz International Proceedings in Informatics (LIPIcs)}, pages
  212--223, Dagstuhl, Germany, 2015. Schloss Dagstuhl--Leibniz-Zentrum fuer
  Informatik.

\bibitem{Kinnersley1994}
N.~G. Kinnersley and M.~A. Langston.
\newblock Obstruction set isolation for the gate matrix layout problem.
\newblock {\em Discrete Applied Mathematics}, 54(2):169--213, 1994.

\bibitem{DDS2011}
D.~Lokshtanov, D.~Marx, and S.~Saurabh.
\newblock Known algorithms on graphs of bounded treewidth are probably optimal.
\newblock In {\em Proceedings of the {T}wenty-{S}econd {A}nnual {ACM}-{SIAM}
  {S}ymposium on {D}iscrete {A}lgorithms}, pages 777--789. SIAM, Philadelphia,
  PA, 2011.

\bibitem{read1998atlas}
R.~Read and R.~Wilson.
\newblock {\em An Atlas of Graphs}.
\newblock Oxford science publications. Clarendon Press, 1998.

\bibitem{RS1991}
N.~Robertson and P.~Seymour.
\newblock Graph minors. {X}. {O}bstructions to tree-decomposition.
\newblock {\em J. Combin. Theory Ser. B}, 52(2):153--190, 1991.

\bibitem{RS04}
N.~Robertson and P.~Seymour.
\newblock Graph minors. {XX}. {W}agner's conjecture.
\newblock {\em Journal of Combinatorial Theory, Series B}, 92(2):325--357,
  2004.
\newblock Special Issue Dedicated to Professor W.T. Tutte.

\bibitem{STpersonal}
S.~H. S{\ae}ther and J.~A. Telle.
\newblock Personal communication.

\bibitem{ST2014}
S.~H. S{\ae}ther and J.~A. Telle.
\newblock Between treewidth and clique-width.
\newblock In {\em Graph-theoretic concepts in computer science}, volume 8747 of
  {\em Lecture Notes in Comput. Sci.}, pages 396--407. Springer, Cham, 2014.

\bibitem{ST1990}
A.~Satyanarayana and L.~Tung.
\newblock A characterization of partial 3-trees.
\newblock {\em Networks}, 20(3):299--322, 1990.

\bibitem{ST1993}
P.~Seymour and R.~Thomas.
\newblock Graph searching and a min-max theorem for tree-width.
\newblock {\em Journal of Combinatorial Theory, Series B}, 58(1):22--33, 1993.

\bibitem{Tutte1961}
W.~Tutte.
\newblock A theory of 3-connected graphs.
\newblock {\em Indagationes Mathematicae (Proceedings)}, 64:441--455, 1961.

\bibitem{RBR2009}
J.~M.~M. van Rooij, H.~L. Bodlaender, and P.~Rossmanith.
\newblock Dynamic programming on tree decompositions using generalised fast
  subset convolution.
\newblock In {\em Algorithms---{ESA} 2009}, volume 5757 of {\em Lecture Notes
  in Comput. Sci.}, pages 566--577. Springer, Berlin, 2009.

\bibitem{Vatshelle2012}
M.~Vatshelle.
\newblock {\em New Width Parameters of Graphs}.
\newblock PhD thesis, University of Bergen, 2012.

\bibitem{WC1983}
J.~A. Wald and C.~J. Colbourn.
\newblock Steiner trees, partial {$2$}-trees, and minimum {IFI} networks.
\newblock {\em Networks}, 13(2):159--167, 1983.

\end{thebibliography}

\end{document}